\newtheorem{thm}{Theorem}[section]
\newtheorem{cor}[thm]{Corollary}
\newtheorem{lem}[thm]{Lemma}
\newtheorem{defn}[thm]{Definition}
\newtheorem{rem}[thm]{Remark}
\numberwithin{equation}{section}
\def\N{\mathbb N}
\def\Z{\mathbb Z}
\def\R{\mathbb R}
\def\b{\mathrm{b}}
\def\a{a}
\def\B{B}
\newcommand{\mand}{\quad\text{and}\quad}
\title[Reiteration Theorem for ${\mathcal R}$ and ${\mathcal L}$-space]
{Reiteration Theorem for ${\mathcal R}$ and ${\mathcal L}$-spaces with the same parameter.}
\author[Doktorski, Fern\'andez-Mart\'{\i}nez and Signes]{Leo R. Ya. Doktorski, P. Fern\'andez-Mart\'{\i}nez, T. Signes}
\address[Leo R. Ya. Doktorski]{Department Object Recognition, Fraunhofer Institute of Optronics, System Technologies and Image Exploitation IOSB, Gutleuthausstr. 1, 76275 Ettlingen, Germany.}
\address[Pedro Fern\'andez-Mart\'{\i}nez and Teresa M. Signes]{Departamento de Matem\'aticas \\
Facultad de Matem\'aticas \\ Universidad de Murcia \\ Campus de
Espinardo \\ 30071 Espinardo (Murcia), Spain.}
\date{\today}
\keywords{Real interpolation, $K$-functional, reiteration theorems, slowly varying functions, rearrangement invariant spaces.}
\begin{document}
%\sloppy 
\begin{abstract}
Let $E, F, E_0, E_1$ be rearrangement invariant spaces; let $a, \b, \b_0, \b_1$ be slowly varying functions and $0< \theta_0,\theta_1<1$. We characterize the interpolation spaces
$$\Big(\overline{X}^{\mathcal R}_{\theta_0,\b_0,E_0,\a,F}, \overline{X}^{\mathcal L}_{\theta_1,\b_1,E_1,\a,F}\Big)_{\eta,\b,E}\:, \quad 0\leq\eta\leq1,$$
when the parameters $\theta_0$ and $\theta_1$ are equal (under appropriate conditions on $\b_i(t)$, $i=0,1$). This completes the study started in \cite{Do2020,FMS-RL3}, which only considered the case $\theta_0<\theta_1$. As an application we recover and generalize interpolation identities for grand and small Lebesgue spaces given by \cite{FFGKR}.
\end{abstract}
\maketitle

\section{Introduction}\label{introduction}
This paper continues the project started in \cite{Do2020,Do2021,FMS-RL1,FMS-RL2,FMS-RL3} where 
we have proved
%we have proved 
reiteration theorems for couples formed by arbitrary combinations of the spaces
$$\overline{X}_{\theta,\b,E}, 
\quad \overline{X}^{\mathcal R}_{\theta,\b,E,\a,F},\quad \overline{X}^{\mathcal L}_{\theta,\b,E,\a,F}$$
when $0\leq\theta\leq1$, $a$ and $\b$ are slowly varying functions and $E$, $F$ are rearrangement invariant (r.i.) Banach function  spaces or Lebesgue spaces $L_q$, $0<q\leq\infty$; see \S 4 for precise definitions.
In particular,  we have identified the spaces 
\begin{equation}\label{e2}
\Big(\overline{X}^{\mathcal R}_{\theta_0, \b_0,E_0,a_0,F_0}, \overline{X}^{\mathcal L}_{\theta_1,\b_1,E_1,a_1,F_1}\Big)_{\eta,\b,E} %= \overline{X}_{\theta,\B_\eta,E}, 
\:,\qquad 0\leq\eta\leq 1, 
\end{equation}
under the condition $\theta_0\neq\theta_1$.

Our goal in this paper is to identify \eqref{e2} in the special case $\theta_0=\theta_1$, provided that $a_0=a_1$, $F_0=F_1$ and for suitable $\b_0$ and $\b_1$;
%satisfying $\b_i(t)\sim\b_i(t^2)$, $i=0,1$
 that is the space
\begin{equation}\label{e3}
\Big(\overline{X}^{\mathcal R}_{\theta,\b_0,E_0,\a,F}, \overline{X}^{\mathcal L}_{\theta, \b_1,E_1,\a,F}\Big)_{\eta,\b,E}\:, \qquad 0\leq \eta\leq 1.
\end{equation}
The resulting spaces in \eqref{e3} are very different from those obtained in \eqref{e2}. While in the case $\theta_0\neq \theta_1$ the spaces belong to the classical scale $\overline{X}_{\eta,\b,E}$ for all $0<\eta<1$, now \eqref{e3} will always be an ${\mathcal R}$ or ${\mathcal L}$-space, depending on the values of $\eta$;  see Theorem \ref{maintheorem} for the precise statements. Moreover, in the cases $\eta=0$ or $\eta=1$ we will need intersections of ${\mathcal R}$ or ${\mathcal L}$-space with the extreme scales of $(\mathcal R,\mathcal L)$-spaces or $(\mathcal L,\mathcal R)$-spaces, respectively. These extremal constructions were introduced in \cite{FMS-RL1}.

\
 
As in \cite{Do2020,FMS-RL1,FMS-RL2,FMS-RL3}, the present work finds its motivation in applications to the interpolation of 
 \textit{grand} and \textit{small Lebesgue} spaces $L^{p),\alpha}$, $L^{(p,\alpha}$, $1<p<\infty$, $\alpha>0$ (see Definition \ref{def_gLp} below). These spaces are limiting interpolation spaces for the couples $(L_1,L_p)$ and $(L_p,L_\infty)$, respectively, and they can also be identified as ${\mathcal R}$ and ${\mathcal L}$-spaces  in the following way
$$
L^{p),\alpha}=(L_1,L_p)_{1,\ell^{-\frac{\alpha}{p}}(u),L_\infty}=(L_1,L_\infty)^{\mathcal R}_{1-\frac1p,\ell^{-\frac{\alpha}{p}}(t),L_\infty,1,L_p}$$
and $$L^{(p,\alpha}=(L_p,L_\infty)_{0,\ell^{\alpha(1-\frac{1}{p})-1}(u),L_1}=(L_1,L_\infty)^{\mathcal L}_{1-\frac1p,\ell^{\alpha(1-\frac{1}{p})-1}(t),L_1,1,L_p}
$$
where $\ell(t)=1+|\log(t)|$, $t\in(0,1)$. Then, from our results concerning \eqref{e3} we will obtain identities for
the interpolation spaces
\begin{equation*}\label{ec6}
\big(L^{p),\alpha},L^{(p,\beta}\big)_{\eta,\b,E}\:,\qquad 0\leq \eta\leq 1.
\end{equation*}
These recover (and extend) recent results of
Fiorenza et al. \cite[Theorem 6.2]{FFGKR}, 
which they obtained in the case $\alpha=\beta=1$, $\eta\in(0,1)$, $\b\equiv1$ and $E=L_q$. See Corollary \ref{cor57} for the precise statements.

\
The paper has a very technical profile. Some of these details are worth to mention. For example, an important feature of the ${\mathcal R}$ and ${\mathcal L}$ spaces is that they can be seen as extrapolation spaces, so \eqref{e3} can be regarded as an extreme reiteration result. This allows us to use a direct approach following the classical reiteration theorems (see \cite{Bennett-Sharpley,Bergh-Lofstrom,triebel}). 
Also, the setting $\theta_0=\theta_1$ contains some subtleties which do not appear in \eqref{e2} when $\theta_0\not=\theta_1$. One of them is the use of \textit{limiting} Hardy type inequalities (Lemma \ref{limit Hardy}). As explained in \cite{EOP}, such inequalities cannot hold for arbitrary slowly varying functions, such as $\b(t)=1+|\log(t)|$, $t>0$, which have the same behavior near $0$ and near $\infty$, but they are true working for example with broken logarithms 
\begin{equation}\label{ellA}
\ell^{(\alpha, \beta)}(t)=\left\{\begin{array}{ll}
\ell^\alpha(t),& 0<t\leq1 \\
\ell^\beta(t)
,& t>1
\end{array}\right.,\quad (\alpha, \beta)\in\R^2,
\end{equation}
provided $\alpha$ and $\beta$ have different signs (see \cite[Lemma 4.2]{EOP}). In this sense, we follow the ideas developed in \cite{FMS-3}, that is, we work  with slowly varying functions  satisfying  that $\b(t)\sim\b(t^2)$ and we define the associated functions $B_0$ and $B_\infty$ by 
$$\B_0(u)=\b(e^{1-\frac1u})\qquad\mbox{ and }\qquad \B_\infty(u)=\b(e^{\frac1u-1}), \qquad \mbox{for}\quad 0<u\leq1.$$ 
Under appropriate conditions in the \textit{extension indices} of $B_0$ and $B_\infty$ the limiting Hardy type inequalities in the whole line $(0,\infty)$ are true.

Another ingredient that we will use is a new identity between the ${\mathcal R}$-spaces and the $({\mathcal R,\mathcal L})$-spaces (or ${\mathcal L}$-spaces with  $({\mathcal L,\mathcal R})$-spaces, respectively); see Corollary \ref{cor45}. These are based on 
new equivalences for norm functionals involving suitable slowly varying functions;  see Lemma \ref{key1}. It must be said that the proof of Lemma \ref{key1} is very technical and needs the discretization of the norm of the spaces involved as well as weighted Hardy type inequalities for r.i. sequence spaces (see Lemma \ref{lema8}).

\

The organization of the paper is the following: In Section 2 we review basic concepts about rearrangement invariant spaces, extension indices of functions and slowly varying functions. In Section 3 we restrict to  slowly varying functions which satisfy that $\b(t)\sim\b(t^2)$, collect some essential lemmas (equivalence lemma, limiting estimates and limiting Hardy type inequalities) and prove the technical lemmas that we shall need for the identification of the spaces in \eqref{e3}. The description of the interpolation methods we shall work with, namely $\overline{X}_{\theta,\b,E}$, the ${\mathcal R}$, ${\mathcal L}$, $(\mathcal R,\mathcal L)$ and $(\mathcal L,\mathcal R)$ constructions  can be found in Section 4. Generalized Holmstedt type formula, the change of variables  and the main reiteration theorem appear in Section 5. Finally, Section 6 is devoted to applications to the interpolation of grand and small Lebesgue spaces.
 
\section{Preliminaries} \label{preliminaries}

We refer to the monographs \cite{Bennett-Sharpley,Bergh-Lofstrom,B-K,KPS,triebel} for the main definitions and properties concerning r.i. spaces and interpolation theory. Recall that a Banach function space $E$ (on  the semiaxis $\Omega=(0,\infty)$ with the Lebesgue measure) is called \textit{rearrangement invariant} (r.i.) if, for any two measurable functions $f$, $g$,
\begin{equation*}
g\in E \text{ and } f^*\leq g^*  \Longrightarrow f\in E \text{ and } \|f\|_E\leq \|g\|_E,
\end{equation*}
where $f^*$ and $g^*$ stand for the non-increasing rearrangements of $f$ and $g$. We  shall denote the r.i. space by $\textbf{e}$, when $\Omega=\Z$ with the counting measure.
Moreover, following  \cite{Bennett-Sharpley},  we always assume that every Banach function space $E$ enjoys the \textit{Fatou property}. Under this assumption,  every r.i. space
$E$ is an exact interpolation space with respect to the Banach couple $(L_{1}, L_{\infty})$, that is
$$E=(L_1,L_\infty)_D^K$$
for some suitable choice of \textit{parameter} $D$.

Let us recall more explicitly what this notation means. For any compatible (quasi-) Banach couple $\overline{X}=(X_0,X_1)$, the \textit{Peetre $K$-functional} $K(t,f;X_0,X_1)\equiv K(t,f)$ is defined for $f\in X_0+X_1$ and $t>0$ by
 \begin{align*}
K(t,f)=\inf \Big \{\|f_0\|_{X_0}+t\|f_1\|_{X_1}:\ f=f_0+f_1,\ f_i\in X_i , \; i=0,1
\Big \}.
\end{align*}
A Banach lattice $D$ of measurable functions on $(0,\infty)$ is called a parameter if it contains the function $\min(1,t)$. For each such $D$, the real interpolation space $\overline{X}_D^K$ consists of all $f\in X_0+X_1$, for which
$$\|f\|_{\overline{X}_D^K}=\|K(t,f)\|_D<\infty.$$

We shall work with three different measures on $(0,\infty)$: the usual Lebesgue measure $dt$, the homogeneous measure $dt/t$ and the measure $dt/t\ell(t)$ where $\ell(t):=(1+|\log t|)$, $t>0$. 
As in \cite{FMS-3}, we use letters with a tilde for spaces with the measure $dt/t$ and with a hat for the measure $dt/t\ell(t)$. For example, the spaces $\widetilde{L}_{1}$  and $\widehat{L}_1$ are defined by the norms
$$  \|f\|_{\widetilde{L}_{1}} = \int_{0}^{\infty}  |f(t)|  \tfrac{dt}{t} \qquad \mbox{and} \qquad 
\|f\|_{\widehat{L}_1} = \int_{0}^{\infty}  |f(t)|  \tfrac{dt}{t\ell(t)},$$
respectively, while $\widetilde{L}_{\infty} $  and $\widehat{L}_\infty$ coincide with $ L_{\infty}$. Similarly, each r.i. space $E$ on $((0,\infty),dt)$ generates two more versions of $E$ on the other measure spaces. Namely, if $D$ is a parameter such that $E=(L_1,L_\infty)_D^K$, then we set
$$\widetilde{E}=(\widetilde{L}_1,L_\infty)_D^K \mand \widehat{E}=(\widehat{L}_1,L_\infty)_D^K.$$

It should be pointed out that the definition of the spaces $\widetilde{E}$ and $\widehat{E}$ do not depend on any particular choice of parameter $D$. In fact, there are simple formulae which directly connect the norms of the spaces $E$, $\widetilde{E}$  and $\widehat{E}$, without any reference to the parameter $D$. 
For all measurable function $f:(0,\infty)\longrightarrow (0,\infty)$ we have
$$\|f\|_{\widetilde{E}(0,1)}=\|f(e^{-u})\|_{E} \quad \text{ and }
\quad \|f\|_{\widetilde{E}(1,\infty)}=\|f(e^u)\|_{E},$$
while
$$\| f \|_{\widehat{E}(0,1)} =  
\| f ( e^{1 - e^u}) \|_{E} \quad \text{and} \quad \|f\|_{\widehat{E} (1,\infty)} 
=  \| f ( e^{ e^u -1 }) \|_{E}.$$

%For the rest of the paper we shall denote by $\overline{f}$ the measurable function defined by $$\overline{f}(t)=f(1/t),\quad t>0.$$ Moreover, given $t>0$, it is sometimes convenient to divide the interval $(0,\infty)$ into two subintervals $(0,t)$ and $(t,\infty)$, considering the spaces $\widetilde{E}(0,t)$ and $\widetilde{E}(t,\infty)$ separately. Observe that $f\in \widetilde{E}(t,\infty)$ if and only if $\overline{f}\in \widetilde{E}(0,1/t)$ and
%\begin{equation}\label{fbar}
%\|f\|_{\widetilde{E}(t,\infty)}=\|\overline{f}\|_{\widetilde{E}(0,1/t)}. 
%\end{equation}

Throughout the paper, given two (quasi-) Banach spaces $X$ and $Y$, we will write $X=Y$ if $X\hookrightarrow Y$ and $Y\hookrightarrow X$, where the latter means that $Y\subset X$ and the natural embedding is continuous.
Similarly, $f\sim g$ means that  $f\lesssim g$ and $g\lesssim f$, where $f\lesssim g$ is the abbreviation of $f(t) \leq C g(t)$, $t >0$, for some positive constant $C$ independent of $f$ and $g$. Moreover, we do not distinguish the notions ``increasing" and ``non-decreasing" as well as ``decreasing" and ``non-increasing" and we say a function $f$ is almost increasing (decreasing) if it is equivalent to an increasing (decreasing) function.

\subsection{Extension indices} Given a non-negative everywhere finite function $\varphi$ on $(0,a)$, $0<a\leq \infty$, we consider its \textit{dilation function} 
$$m_\varphi(t)=\sup_{0<s<\min\{a,\frac{a}{t}\}}\frac{\varphi(ts)}{\varphi(s)},\qquad 0<t<\infty.$$ 
If $m_\varphi(t)$ is finite everywhere, then there exist the \textit{lower} and \textit{upper extension indices} of $\varphi$ given by
$$\pi_\varphi=\lim_{t\rightarrow 0}\frac{\ln m_\varphi(t)}{\ln
t},\qquad \rho_\varphi=\lim_{t\rightarrow \infty}\frac{\ln
m_\varphi(t)}{\ln t}.$$ 
In general, $-\infty<\pi_\varphi\leq\rho_\varphi<\infty$, but $0\leq \pi_\varphi\leq\rho_\varphi<\infty$ if the function $\varphi$ is increasing and $0\leq \pi_\varphi\leq\rho_\varphi\leq1$ if  $\varphi$ is quasi-concave.
Moreover, if $0<\pi_\varphi\leq\rho_\varphi<\infty$, then 
$\varphi$ is equivalent to the increasing function $\int_0^t\varphi(s)\frac{ds}{s}$ while if $-\infty<\pi_\varphi\leq\rho_\varphi<0$ then the function $\varphi$ is equivalent to the decreasing function $\int_t^\infty\varphi(s)\frac{ds}{s}$ (see \cite{KPS} pg. 57).
As an example, $\varphi(t)=t^\alpha \ell(t)^\beta$, $\alpha,\beta\in\mathbb{R}$, has $\pi_\varphi=\rho_\varphi=\alpha$.

The following properties of extension indices will be useful in the sequel and can be easily proved:
\begin{itemize}
\item[(i)] Let $\sigma\in\R$ and $\varphi(t)=t^{\sigma}\psi(t)$, then $\pi_{\varphi}=\sigma+\pi_{\psi}$ and $\rho_{\varphi}=\sigma+\rho_{\psi}$.
\item[(ii)] If $\varphi(t)=\alpha(t)\psi(t)$ then $\pi_{\varphi}\geq \pi_{\alpha}+\pi_{\psi}$ and $\rho_{\varphi}\leq \rho_{\alpha}+\rho_{\psi}$.
\item[(iii)] If $\varphi(t)=\psi(t^{-1})$ then $\pi_{\varphi}=-\rho_{\psi}$ and $\rho_{\varphi}=-\pi_{\psi}$. 
\item[(iv)] If $\varphi(t)=1/\psi(t)$ then $\pi_{\varphi}=-\rho_{\psi}$ and $\rho_{\varphi}=-\pi_{\psi}$.
\item[(v)]If $\varphi(t)=\theta(\psi(t))$ and $\theta$ is almost increasing then $\pi_{\varphi}\geq \pi_{\theta}\pi_{\psi}$ and $\rho_{\varphi}\leq \rho_{\theta}\rho_{\psi}$.
\item[(vi)] If $\varphi\sim \psi$ then $\pi_{\varphi}=\pi_{\psi}$ and $\rho_{\varphi}=\rho_{\psi}$.
\end{itemize}
From (i) it follows that the ratio $\varphi(t)/t^\sigma$ is almost increasing for any $\sigma<\pi_\varphi$ and almost decreasing for any $\sigma>\rho_\varphi$.

Every r.i. space $E$ has its fundamental function
$\varphi_E(\lambda)=\|\chi_{(0,\lambda)}\|_E$, which is continuous
and quasi-concave.  Moreover, the space $E$ always admits an
equivalent renorming such that $\varphi_E$ becomes
concave and the derivative $\varphi'_E$ exists a.e. and is
decreasing. In particular, $0\leq \pi_{\varphi_E}\leq \rho_{\varphi_E}\leq 1$.

\subsection{Slowly varying functions} 
In this subsection we recall the definition and basic properties of \textit{slowly varying functions}.  See  \cite{BGT,GOT,Matu}.

\begin{defn}\label{def1}
A positive Lebesgue measurable function $\b$, $0\not\equiv\b\not\equiv\infty$,
is said to be \textit{slowly varying} on $(0,\infty)$ (notation $\b\in SV$) if, for each $\varepsilon>0$, the function $t \leadsto t^\varepsilon\b(t)$  is  almost increasing on $(0,\infty)$
and $ t \leadsto t^{-\varepsilon}\b(t)$ is  almost decreasing on $(0,\infty)$.
\end{defn}

The class $SV(0, 1)$ can be defined similarly, changing the interval $(0,\infty)$ by $(0,1)$. Examples of $SV$-functions include powers of logarithms,
$\ell^\alpha(t)=(1+|\log t|)^\alpha$, rei\-terated logarithms $(\ell\circ\ldots\circ\ell)^\alpha (t),\ \alpha\in\R,\ t>0$, ``broken" logarithmic functions $\ell^{\alpha,\beta}(t)$ (see \eqref{ellA}) 
 and also the family of functions as $t \leadsto \exp(|\log t|^\alpha), \ \alpha \in (0,1), \; t >0$. 

Some basic properties of slowly varying functions are summarized in the following lemmas.

\begin{lem}\label{lem0}
Let $\b, \b_1, \b_2\in SV$ and let $\mu$ be a non-negative measurable function on $(0,\infty)$. 
\begin{itemize}
\item[(i)]  Then $\b_1\b_2\in SV$, $\b(1/t)\in SV$ and $\b^r\in SV$ for all $r\in\R$.
\item[(ii)] If for some $\delta>0$, the functions $\mu$ and $t/\mu^\delta(t)$ are almost increasing  on $(0,\infty)$, then $\b\circ\mu\in SV$.
\item[(iii)] If $\epsilon,s>0$ then there are positive constants $c_\epsilon$ and $C_\epsilon$ such that
$$c_\epsilon\min\{s^{-\epsilon},s^\epsilon\}\b(t)\leq \b(st)\leq C_\epsilon \max\{s^\epsilon,s^{-\epsilon}\}\b(t)\quad \mbox{for every}\  t>0.$$
In particular, $\pi_\b=\rho_\b=0$.
\item[(iv)] $\b\circ f\sim \b\circ g$ if $f$ and $g$ are positive finite equivalent functions on $(0,\infty)$.
\item[(v)] If $\b_1$ is almost monotone, then $\b\circ\b_1\in SV$.
\end{itemize}
\end{lem}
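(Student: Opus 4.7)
My plan is to dispatch the five items one after another, exploiting the definition directly and chaining the earlier parts as soon as they become available. The overall strategy is to reduce every claim to the almost monotonicity of $t^{\pm\varepsilon}\b(t)$ for arbitrary $\varepsilon>0$, which is the defining property.

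For (i), I would take an arbitrary $\varepsilon>0$ and split it as $\varepsilon=\varepsilon/2+\varepsilon/2$: the product $t^{\varepsilon}\b_1(t)\b_2(t)=[t^{\varepsilon/2}\b_1(t)][t^{\varepsilon/2}\b_2(t)]$ is a product of almost increasing functions and hence almost increasing; the same argument works for $t^{-\varepsilon}\b_1\b_2$, proving $\b_1\b_2\in SV$. The assertion $\b(1/t)\in SV$ follows by the change of variable $s=1/t$ applied to the definition. For $\b^r\in SV$ with $r>0$, raise the defining inequalities to the $r$-th power after rescaling $\varepsilon$; for $r<0$, combine this with the $\b(1/t)$-statement (or equivalently with the observation that $t^\varepsilon\b^{-1}(t)=t^\varepsilon/\b(t)$ is almost increasing because $t^{-\varepsilon}\b(t)$ is almost decreasing).

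For (iii), I would fix $\varepsilon>0$ and translate ``$t^\varepsilon\b(t)$ almost increasing'' into the two-point inequality $\b(st)\geq c_\varepsilon s^{-\varepsilon}\b(t)$ for $s\geq 1$, and ``$t^{-\varepsilon}\b(t)$ almost decreasing'' into $\b(st)\leq C_\varepsilon s^\varepsilon\b(t)$ for $s\geq 1$; the case $0<s<1$ is handled by swapping the roles of $t$ and $st$. Taking max/min over the two ranges of $s$ yields the stated inequality. Dividing by $\ln t$ and letting $t\to 0$ or $t\to\infty$ shows $|\!\ln m_\b(t)/\ln t|\leq \varepsilon$, and since $\varepsilon>0$ was arbitrary one gets $\pi_\b=\rho_\b=0$. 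Item (iv) is then immediate: $f\sim g$ gives $c\leq f(t)/g(t)\leq C$, so by (iii) $\b(f(t))=\b((f(t)/g(t))\cdot g(t))$ is bounded above and below by multiples of $\b(g(t))$.

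The more delicate items are (ii) and (v). For (ii) the key idea is the algebraic identity $t^{\varepsilon}\b(\mu(t))=\bigl(t/\mu^\delta(t)\bigr)^{\varepsilon}\cdot\mu(t)^{\delta\varepsilon}\b(\mu(t))$: the first factor is almost increasing by hypothesis, while $\mu(t)^{\delta\varepsilon}\b(\mu(t))$ is the composition of the almost increasing function $s\mapsto s^{\delta\varepsilon}\b(s)$ (this uses $\b\in SV$) with the almost increasing $\mu$, hence almost increasing. The symmetric decomposition $t^{-\varepsilon}\b(\mu(t))=\bigl(t/\mu^\delta(t)\bigr)^{-\varepsilon}\mu(t)^{-\delta\varepsilon}\b(\mu(t))$ gives the almost decreasing assertion. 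Finally, for (v), if $\b_1$ is almost increasing I would verify that (ii) is applicable: since $\b_1\in SV$, for every $\delta>0$ the function $t^{-1/2}\b_1^{\delta}(t)$ is almost decreasing (take $\varepsilon=1/(2\delta)$ in the definition of $SV$), so $t/\b_1^{\delta}(t)=t^{1/2}\cdot(t^{-1/2}\b_1^{\delta}(t))^{-1}$ is almost increasing, and (ii) yields $\b\circ\b_1\in SV$. If instead $\b_1$ is almost decreasing, I set $\b_2(t)=1/\b_1(t)$, which is almost increasing and in $SV$ by (i), and define $\widetilde\b(s)=\b(1/s)\in SV$ by (i); then $\b\circ\b_1=\widetilde\b\circ\b_2$ and the previous case applies.

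The main obstacle I anticipate is the careful bookkeeping in (ii) and (v): one must choose $\varepsilon$ and $\delta$ consistently so that both almost monotonicity statements survive after the decomposition, and the reduction step in (v) from the ``almost decreasing'' case to the ``almost increasing'' case must be carried out cleanly. Everything else is a direct manipulation of the defining inequalities of $SV$.
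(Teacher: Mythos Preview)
The paper does not prove this lemma; immediately after stating Lemmas~\ref{lem0} and~\ref{lem1} it writes ``We refer to \cite{GOT,FMS-1,FMS-4} for the proof''. Your proposal therefore goes further than the paper itself and supplies a complete, correct argument from the definition of $SV$. All five items are handled soundly: the decompositions in (i), the two-point reformulation in (iii), the factorisation $t^{\pm\varepsilon}\b(\mu(t))=(t/\mu^\delta(t))^{\pm\varepsilon}\,\mu(t)^{\pm\delta\varepsilon}\b(\mu(t))$ in (ii), and the reduction of the almost-decreasing case of (v) to the almost-increasing case via $\b\circ\b_1=\widetilde\b\circ(1/\b_1)$ are all correct. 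One cosmetic slip: in (iii) ``dividing by $\ln t$'' should read ``dividing $\ln m_\b(s)$ by $\ln s$ and letting $s\to 0,\infty$''; the conclusion $\pi_\b=\rho_\b=0$ then follows because the displayed inequality gives $c_\varepsilon\min\{s^{\varepsilon},s^{-\varepsilon}\}\le m_\b(s)\le C_\varepsilon\max\{s^{\varepsilon},s^{-\varepsilon}\}$ for every $\varepsilon>0$.
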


\vspace{2mm}
\begin{lem} \label{lem1}
Let $E$ be an r.i. space on $(0,\infty)$ and $\b\in SV$.
\begin{itemize}
\item[(i)] If $\alpha>0$, then, for all $t>0$,
$$\|s^\alpha\b(s)\|_{\widetilde{E}(0,t)}\sim t^\alpha\b(t)\quad \mbox{ and } \quad
\|s^{-\alpha}\b(s)\|_{\widetilde{E}(t,\infty)}\sim t^{-\alpha}\b(t).$$
\item[(ii)] If $\alpha\in\R$, then, for all $t>0$,
$$\|s^\alpha\b(s)\|_{\widetilde{E}(t,2t)}\sim t^\alpha\b(t).$$
\item[(iii)] The following functions belong to $SV$
$$t \leadsto\|\b\|_{\widetilde{E}(0,t)}
\quad \text{and}\quad
t \leadsto\|\b\|_{\widetilde{E}(t,\infty)}, \quad t >0.$$
\item[(iv)] For all $t >0$,
$$\b(t) \lesssim \| \b \|_{\widetilde{E}(0,t)} \quad \text{and} \quad
\b(t) \lesssim \| \b \|_{\widetilde{E}(t,\infty)}.$$
\end{itemize}
\end{lem}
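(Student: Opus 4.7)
The plan rests on two tools: the pointwise slow-variation estimate $\b(ts)\leq C_\epsilon \max\{s^\epsilon,s^{-\epsilon}\}\b(t)$ of Lemma~\ref{lem0}(iii), and the dilation invariance $\|f(\lambda\,\cdot)\|_{\widetilde{E}}=\|f\|_{\widetilde{E}}$ ($\lambda>0$) of the $\widetilde{E}$-norm, which is immediate from the fact that the measure $dt/t$ is invariant under dilations and $\widetilde{E}$ is rearrangement invariant with respect to it. All estimates will be ``pulled back'' to the unit interval by the substitution $s=tu$.

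For part (i), I would fix $0<\epsilon<\alpha$ and apply Lemma~\ref{lem0}(iii) at scale $t$ to get $\b(s)\lesssim (s/t)^{-\epsilon}\b(t)$ on $(0,t)$, so that
$$\|s^\alpha\b(s)\|_{\widetilde{E}(0,t)}\;\lesssim\; t^\epsilon\b(t)\,\|s^{\alpha-\epsilon}\|_{\widetilde{E}(0,t)}\;=\;t^\alpha\b(t)\,\|u^{\alpha-\epsilon}\|_{\widetilde{E}(0,1)},$$
where the final norm is finite because $0\leq u^{\alpha-\epsilon}\leq 1$ on $(0,1)$ and $\chi_{(0,1)}\in\widetilde{E}$. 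The matching lower bound is obtained by restricting the norm to $(t/2,t)$, where both $s^\alpha$ and $\b(s)$ are equivalent to $t^\alpha$ and $\b(t)$ respectively (bounded ratio $s/t$), and $\|\chi_{(t/2,t)}\|_{\widetilde{E}}=\|\chi_{(1/2,1)}\|_{\widetilde{E}}$ is a positive $t$-independent constant by dilation invariance. The second equivalence in (i) is proved symmetrically on $(t,\infty)$, and parts (ii) and (iv) follow from the same restrict-to-a-dyadic-interval argument, valid for all $\alpha\in\R$ in the case of (ii) since the ratio $s/t$ stays bounded.

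Part (iii) is the more delicate point and is where I expect to spend the most effort. Write $g(t)=\|\b\|_{\widetilde{E}(0,t)}$ and $h(t)=\|\b\|_{\widetilde{E}(t,\infty)}$; by monotonicity of truncations $g$ is increasing and $h$ decreasing, so $t^\epsilon g(t)$ is increasing and $t^{-\epsilon}h(t)$ decreasing for every $\epsilon>0$. The two remaining monotonicities require the pointwise SV-bound to be transferred inside the $\widetilde{E}$-norm. After the substitution $s=tu$ one has
$$g(t)=\|\b(tu)\|_{\widetilde{E}(0,1)} \mand h(t)=\|\b(tu)\|_{\widetilde{E}(1,\infty)},$$
and for $t_2>t_1$ Lemma~\ref{lem0}(iii) applied at scale $t_1 u$ with dilation factor $t_2/t_1>1$ gives $\b(t_2 u)\leq C_\epsilon(t_2/t_1)^\epsilon\b(t_1 u)$ uniformly in $u$. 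Taking $\widetilde{E}$-norms yields
$$g(t_2)\leq C_\epsilon(t_2/t_1)^\epsilon g(t_1) \mand h(t_1)\leq C_\epsilon(t_2/t_1)^\epsilon h(t_2),$$
that is, $t^{-\epsilon}g(t)$ is almost decreasing and $t^\epsilon h(t)$ is almost increasing, so $g,h\in SV$. The chief obstacle is really only the bookkeeping: verifying the dilation identity for $\widetilde{E}$ once and for all, and choosing the auxiliary exponent $\epsilon$ so that the fundamental-function bound and the pointwise SV-estimate combine cleanly in (i).
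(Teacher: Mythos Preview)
Your proof is essentially correct and follows the standard route found in the references the paper cites (the paper itself does not prove this lemma, referring instead to \cite{GOT,FMS-1,FMS-4}). Combining the pointwise slow-variation bound from Lemma~\ref{lem0}(iii) with the dilation invariance of the $\widetilde{E}$-norm is exactly the right mechanism, and your handling of~(iii) via the substitution $s=tu$ and the uniform estimate $\b(t_2u)\leq C_\epsilon(t_2/t_1)^\epsilon\b(t_1u)$ is clean.

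There is, however, one genuine gap in the justification for part~(i). You assert that $\|u^{\alpha-\epsilon}\|_{\widetilde{E}(0,1)}<\infty$ because $u^{\alpha-\epsilon}\leq\chi_{(0,1)}$ and $\chi_{(0,1)}\in\widetilde{E}$. The second claim is false in general: the interval $(0,1)$ has \emph{infinite} $dt/t$-measure, so for instance $\chi_{(0,1)}\notin\widetilde{L}_p$ for any $p<\infty$, and there is no reason to expect $\chi_{(0,1)}\in\widetilde{E}$ for an arbitrary r.i.\ space $E$. The finiteness you need is nonetheless true, but for a different reason. Using the identity $\|f\|_{\widetilde{E}(0,1)}=\|f(e^{-u})\|_{E}$ recorded in the preliminaries, one has
\[
\|u^{\alpha-\epsilon}\|_{\widetilde{E}(0,1)}=\big\|e^{-(\alpha-\epsilon)u}\big\|_{E},
\]
and the right-hand side is finite because $e^{-(\alpha-\epsilon)u}\in L_1(0,\infty)\cap L_\infty(0,\infty)\hookrightarrow E$. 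Equivalently, one checks directly that $u^{\alpha-\epsilon}\chi_{(0,1)}\in\widetilde{L}_1\cap L_\infty\hookrightarrow\widetilde{E}$; the crude bound by $\chi_{(0,1)}$ is not enough. With this correction your argument goes through.
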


We refer to \cite{GOT,FMS-1,FMS-4} for the proof of Lemma \ref{lem0} and \ref{lem1}, respectively.
\begin{rem}\label{rem23}
\textup{ The property (iii) of Lemma \ref{lem0} implies that if $\b\in SV$ is such that $\b(t_0)=0$ ($\b(t_0)=\infty$) for some $t_0>0$, then $\b\equiv0$ ($\b\equiv\infty$). Thus, by Lemma \ref{lem1} (ii), if $\|\b\|_{\widetilde{E}(0,1)}<\infty$ then
$\|\b\|_{\widetilde{E}(0,t)}<\infty$ for all $t>0$, and if $\|\b\|_{\widetilde{E}(1,\infty)}<\infty$ then
$\|\b\|_{\widetilde{E}(t,\infty)}<\infty$ for all $t>0$.}
\end{rem}

\section{Especial slowly varying functions.}

In this paper we are concerned with those slowly varying functions $\b$ such that $\b(t^2)\sim\b(t)$. All previous examples, except $\b(t)=exp(|\log t|^\alpha)$, $\alpha\in(0,1)$, satisfy this condition. For every $\b\in SV$ with this property, we define new functions $\B_0$ and $\B_\infty$ by 
\begin{equation}\label{fb}
\B_0(u)=\b(e^{1-\frac1u})\qquad\mbox{ and }\qquad \B_\infty(u)=\b(e^{\frac1u-1}), \qquad \mbox{for}\quad 0<u\leq1.
\end{equation}

For example, if $\b(t)=\ell^{(\alpha,\beta)}(t)$ then $B_0(u)=1/u^\alpha$ and $B_\infty(u)=1/u^\beta$, $u\in(0,1]$. 
The condition $\b(t^2)\sim\b(t)$ implies that $\B_0$ and $\B_\infty$ satisfy the $\Delta_2$ condition, that is $\B_0(u)\sim \B_0(2u)$ and 
$\B_{\infty}(u)\sim \B_{\infty}(2u)$. 
%so that $m_{\B_0}(t)<\infty$ and $m_{\B_\infty}(t)<\infty$ for all $t>0$. 
Hence the extension indices of $\B_0$ and $\B_\infty$ exist and are both finite.

In what follows we need to consider different functions with the above properties, so we shall denote these functions by $a$, $\b$, $\phi$, etc, and their associated functions  by $A_0$, $A_\infty$, $B_0$, $B_\infty$, $\Phi_0$, $\Phi_\infty$, etc (with capital letters).

Next, we collect some useful lemmas  about this class of slowly varying functions from \cite{FMS-3}.  The first lemma gives the counterparts of Lemma \ref{lem1} i) in the limiting case $\alpha=0$. The third one contains limiting Hardy-type inequalities.

\begin{lem}\cite[Lemma 3.2]{FMS-3} \label{lema2.3}
Let $E$ be an r.i. space and let $\b\in SV$ such that $\b(t^{2}) \sim \b(t)$, with associated functions $\B_0$ and $\B_\infty$ defined in (\ref{fb}).
\begin{itemize}
\item[i)] If $\rho_{\B_{\infty}} < \pi_{\varphi_E} \leq \rho_{\varphi_E} < \pi_{\B_{0}}$, then 
$$ \|  \b \|_{\widetilde{E}(0,t)} \sim \b(t) \varphi_{E}(\ell(t)),\qquad t>0.$$
\item[ii)] If $\rho_{\B_{0}} < \pi_{\varphi_E} \leq \rho_{\varphi_E} < \pi_{\B_{\infty}}$, then 
the equivalence is
$$\| \b \|_{\widetilde{E}(t,\infty)} \sim \b(t)  \varphi_{E} (\ell(t)),\qquad t>0.$$
\end{itemize}
\end{lem}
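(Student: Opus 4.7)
The plan is to treat both statements jointly by first reducing (ii) to (i) via the substitution $s\mapsto 1/s$, which preserves the homogeneous measure $ds/s$ (hence the $\widetilde{E}$-norm), maps $\widetilde{E}(t,\infty)$ onto $\widetilde{E}(0,1/t)$, and interchanges the roles of $\B_{0}$ and $\B_{\infty}$. I then focus on (i) and split the argument into the cases $t\in(0,1]$ and $t\in[1,\infty)$.

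For $t\leq 1$ the strategy is to apply the change of variable $v=\ell(s)=1-\log s$, which sends $(0,t]$ onto $[\ell(t),\infty)$ with $ds/s=-dv$ and rewrites $\b(s)=\B_{0}(1/v)$, yielding
\[
\|\b\|_{\widetilde{E}(0,t)}=\|\B_{0}(1/v)\,\chi_{(\ell(t),\infty)}(v)\|_{E(dv)}.
\]
Since $\pi_{\B_{0}}>0$, $\B_{0}$ is almost increasing, so $\B_{0}(1/v)$ is almost decreasing in $v$; its non-increasing rearrangement on $(0,\infty)$ equals $g(s)=\B_{0}(1/(s+\ell(t)))$. On $(0,\ell(t))$ the $\Delta_{2}$ condition $\B_{0}(u)\sim\B_{0}(2u)$ (inherited from $\b(t^{2})\sim\b(t)$) gives $g(s)\sim\B_{0}(1/\ell(t))=\b(t)$, which immediately yields both $\|g\,\chi_{(0,\ell(t))}\|_{E}\sim\b(t)\varphi_{E}(\ell(t))$ and the desired lower bound. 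For the tail I pick $\sigma$ with $\rho_{\varphi_{E}}<\sigma<\pi_{\B_{0}}$, so that $\B_{0}(u)/u^{\sigma}$ is almost increasing, giving $g(s)\lesssim\b(t)(\ell(t)/s)^{\sigma}$ on $(\ell(t),\infty)$. The standard identity $\|s^{-\sigma}\chi_{(a,\infty)}\|_{E}\sim a^{-\sigma}\varphi_{E}(a)$, valid whenever $\sigma>\rho_{\varphi_{E}}$ (proved by rewriting the rearrangement as $(s+a)^{-\sigma}$ and computing $K(\cdot,(s+a)^{-\sigma};L_{1},L_{\infty})$), then collapses the tail contribution to $\b(t)\varphi_{E}(\ell(t))$.

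For $t\geq 1$ I decompose $\|\b\|_{\widetilde{E}(0,t)}\sim\|\b\|_{\widetilde{E}(0,1)}+\|\b\|_{\widetilde{E}(1,t)}$ by triangle inequality (upper bound) and pointwise domination (lower bound). The first summand is a constant $\sim\b(1)\varphi_{E}(1)$ handled by the case $t=1$. For the second, the substitution $v=\ell(s)=1+\log s$ on $[1,t]$ gives
\[
\|\b\|_{\widetilde{E}(1,t)}=\|\B_{\infty}(1/v)\,\chi_{(1,\ell(t))}\|_{E(dv)}.
\]
Choosing $\tau$ with $\rho_{\B_{\infty}}<\tau<\pi_{\varphi_{E}}$, the almost-decreasing property of $\B_{\infty}(u)/u^{\tau}$ yields $\B_{\infty}(1/v)\lesssim\b(t)(\ell(t)/v)^{\tau}$ on $[1,\ell(t)]$, and the estimate $\|v^{-\tau}\chi_{(1,\ell(t))}\|_{E}\lesssim\ell(t)^{-\tau}\varphi_{E}(\ell(t))$---obtained by a dyadic decomposition of $(1,\ell(t))$ together with the almost-increasing behavior of $\varphi_{E}(\lambda)/\lambda^{\tau'}$ for any $\tau'\in(\tau,\pi_{\varphi_{E}})$---gives $\|\b\|_{\widetilde{E}(1,t)}\lesssim\b(t)\varphi_{E}(\ell(t))$. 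The constant piece is absorbed using $\b(1)\lesssim\b(t)\ell(t)^{\tau}$ (same almost-decreasing property at $u=1$ versus $u=1/\ell(t)$) combined with $\ell(t)^{\tau}\varphi_{E}(1)\lesssim\varphi_{E}(\ell(t))$. The matching lower bound follows by restricting to $(\sqrt{t},t)$ (or to $(0,1)$ when $t$ is near $1$), where $\b(s)\sim\b(t)$ by $\Delta_{2}$ and $\|\chi_{(\sqrt{t},t)}\|_{\widetilde{E}}\sim\varphi_{E}(\ell(t))$.

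The hardest step will be the two auxiliary r.i.-norm estimates for power functions on half-line intervals, namely $\|s^{-\sigma}\chi_{(a,\infty)}\|_{E}\sim a^{-\sigma}\varphi_{E}(a)$ for $\sigma>\rho_{\varphi_{E}}$ and $\|v^{-\tau}\chi_{(1,a)}\|_{E}\lesssim a^{-\tau}\varphi_{E}(a)$ for $\tau<\pi_{\varphi_{E}}$. Both rest on the quasi-concavity of $\varphi_{E}$ combined with extension-index control of $m_{\varphi_{E}}$: the first reduces (for $\sigma>1$) to the identity $K(t,\chi_{(0,a)};L_{1},L_{\infty})=\min(t,a)$ after checking that $K(t,(s+a)^{-\sigma};L_{1},L_{\infty})\sim a^{-\sigma}\min(t,a)$, while for $\rho_{\varphi_{E}}<\sigma\leq 1$ one needs more delicate arguments; the second follows by dyadically summing $\sum_{k\leq\log_{2}a}2^{-k\tau}\varphi_{E}(2^{k})$ and dominating the sum by its last term via the almost-increasing property of $\varphi_{E}(\lambda)/\lambda^{\tau'}$.
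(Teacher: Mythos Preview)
The paper does not give a proof of this lemma: it is quoted verbatim from \cite[Lemma~3.2]{FMS-3} and used as a black box. Hence there is no ``paper's proof'' to compare against; I can only assess the proposal on its own.

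Your outline is sound and, modulo the acknowledged loose ends, correct. The reduction of (ii) to (i) by $s\mapsto 1/s$ is clean (note that the paper's convention $\|f\|_{\widetilde{E}(0,1)}=\|f(e^{-u})\|_{E}$ differs from your $v=\ell(s)$ by a unit shift, but rearrangement invariance absorbs this). The $t\le 1$ argument is the heart of the matter and works exactly as you describe: the rearrangement identification $g(s)=\B_0\bigl(1/(s+\ell(t))\bigr)$ is legitimate after replacing $\B_0$ by an equivalent strictly monotone function, the $\Delta_2$ step on $(0,\ell(t))$ is fine, and the tail bound via $\sigma\in(\rho_{\varphi_E},\pi_{\B_0})$ is the right idea. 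One remark: the $K$-functional computation you sketch for $\|s^{-\sigma}\chi_{(a,\infty)}\|_E$ genuinely only gives the claimed $a^{-\sigma}\min(t,a)$ equivalence when $\sigma>1$; since $\rho_{\varphi_E}$ may well be $<1$ (indeed $\rho_{\varphi_E}\le 1$ always), you will often be forced into the ``more delicate'' case. The dyadic decomposition you propose at the end handles both auxiliary estimates uniformly---decompose $(a,\infty)=\bigcup_{k\ge 0}(2^ka,2^{k+1}a]$, use $m_{\varphi_E}(2^k)\lesssim 2^{k(\rho_{\varphi_E}+\varepsilon)}$, and sum the geometric series---so I would simply lead with that rather than the $K$-functional route.

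For $t\ge 1$ your decomposition and the choice $\tau\in(\rho_{\B_\infty},\pi_{\varphi_E})$ are correct. The lower bound via $(\sqrt t,t)$ is a little delicate near $t=1$ (where $\ell(t)-\ell(\sqrt t)$ is small), but your fallback to $(0,1)$ covers that range since both sides are then $\sim\b(1)\varphi_E(1)$; you may want to make the cutoff explicit, e.g.\ treat $t\in[1,2]$ separately. In short, the strategy is right, and the only place requiring real care is the power-function estimate, which you have correctly identified and for which the dyadic argument is the cleanest route.
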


%\begin{lem} \label{lema2.4}
%Let $E$ be an r.i. space, $\b\in SV$ such that $\b(t^{2}) \sim \b(t)$ and let $\B_0$ and $\B_\infty$ be its associated functions defined by (\ref{fb}).
%
%\vspace{3mm}
%\noindent\textup{(i)} If $ \rho_{\B_{\infty}} < 0 < \pi_{\B_{0}}$, then 
%$$ \|  \b \|_{\widehat{L}_1(0,t)} \lesssim \b(t),\qquad t>0.$$
%\textup{(ii)} If $\rho_{\B_{0}} < 0 < \pi_{\B_{\infty}}$, then 
%
%$$\| \b \|_{\widehat{L}_1(t,\infty)} \lesssim \b(t),\qquad t>0.$$
%\end{lem}
%
%\begin{proof}
%For $0 < t \leq1$ we use  the expression of $\b$ on $(0,1)$ and the change of variables, $\ell(s)=\frac1u$, we obtain
%$$\int_0^t\b(s)\frac{ds}{s\ell(s)}=\int_0^t\B_0\Big(\frac1{\ell(s)}\Big)\frac{ds}{s\ell(s)}=\int_0^{\frac1{\ell(t)}}\B_0(u)\frac{du}u.$$
%From hypothesis $0 < \pi_{\B_{0}}$, then using Corollary 1 of  \cite[p. 57]{KPS} we have that
%$$\int_0^t\b(s)\frac{ds}{s\ell(s)}\sim \B_0\Big(\frac1{\ell(t)}\Big)=\b(t).$$
%Let now $1<t<\infty$. Using the previous case and the same change of variables
%$$\int_0^t\b(s)\frac{ds}{s\ell(s)}\sim\b(1)+\int_1^t\B_\infty\Big(\frac1{\ell(s)}\Big)\frac{ds}{s\ell(s)}=
%\b(1)+\int_{\frac{1}{\ell(t)}}^\infty\B_\infty(u)\frac{du}{u}.$$
%The function $B_\infty$ has the property $\rho_{\B_{\infty}} < 0$ hence (see \cite[p. 57]{KPS}) 
%$$\int_0^t\b(s)\frac{ds}{s\ell(s)}\sim\b(1)+B_\infty\Big( \frac{1}{\ell(t)}\Big)\leq\b(t).$$
%
%Similarly it can be proved the estimate (ii).
%\end{proof}

\begin{lem}\cite[Lemma 3.5]{FMS-3} \label{embeddings II}
Let $E$ be an r.i. space and let $\b\in SV$ such that $\b(t^2)\sim\b(t)$, with associated functions $\B_0$ and $\B_\infty$ defined in (\ref{fb}).
\begin{itemize}
\item[i)] If $0 < \pi_{B_{0}}$ and $\varphi$ is an almost decreasing function, then for all $t>0$
$$ \| \b(s) \varphi(s)    \|_{\widetilde{E}(0,t)}    \lesssim   
\int_{0}^{t}  \b(s) \varphi(s) \varphi_{E}( \ell (s))  \frac{ds}{s\ell (s)}.$$ 

\item[ii)] If $0<\pi_{B_{\infty}}$ and $\varphi$ is an almost increasing function, then for all $t>0$
$$  \|  \b(s) \varphi(s)   \|_{\widetilde{E}(t,\infty)}    \lesssim   
\int_{t}^{\infty}  \b(s) \varphi(s)  \varphi_{E}( \ell (s))  \frac{ds}{s\ell (s)}.$$
\end{itemize}
\end{lem}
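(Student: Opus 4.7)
The plan is to unfold the $\widetilde{E}(0,t)$-norm into a standard $E$-norm on a half-line via a logarithmic change of variable, and then handle the resulting inequality by a dyadic decomposition that exploits $\pi_{B_0}>0$ together with the $\Delta_2$-regularity granted by $\b(t^2)\sim\b(t)$. I would prove (i); (ii) follows symmetrically, using the change of variable $v=1+\log s$ on $s>1$ and replacing $B_0$ by $B_\infty$ (the roles of ``decreasing'' and ``increasing'' swap accordingly).

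After reducing to $0<t\leq1$ (for $t>1$ the interval $(0,t)$ splits as $(0,1)\cup(1,t)$, and on the bounded piece $(1,t)$ a direct estimate suffices, since $\b\in SV$ and $\varphi$ is almost decreasing there), I substitute $v=\ell(s)=1-\log s$, so that $s=e^{1-v}$ and $(0,t)\leftrightarrow(\ell(t),\infty)$. A routine computation yields
$$\b(s)=\widetilde{B_0}(v):=B_0(1/v),\quad \varphi(s)=\widetilde{\varphi}(v):=\varphi(e^{1-v}),\quad \varphi_E(\ell(s))=\varphi_E(v),\quad \frac{ds}{s\ell(s)}=\frac{dv}{v},$$
and $\|\cdot\|_{\widetilde{E}(0,t)}$ becomes $\|\cdot\|_{E(\ell(t),\infty)}$. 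Here $\widetilde{\varphi}$ is almost increasing in $v$. From $\pi_{B_0}>0$ and property (iii) of the extension indices one gets $\rho_{\widetilde{B_0}}=-\pi_{B_0}<0$, so $\widetilde{B_0}$ is almost decreasing; moreover $\widetilde{B_0}(2v)\sim\widetilde{B_0}(v)$ (since $\b(t^2)\sim\b(t)$ forces $B_0(2u)\sim B_0(u)$) and $\varphi_E(2v)\sim\varphi_E(v)$ (by quasi-concavity).

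Now I pick $k_0\in\Z$ with $2^{k_0}\leq\ell(t)<2^{k_0+1}$ and decompose $[\ell(t),\infty)\subset\bigcup_{k\geq k_0}J_k$, where $J_k=[2^k,2^{k+1})$. On $J_k$, almost monotonicity gives the pointwise majorization $\widetilde{B_0}(v)\widetilde{\varphi}(v)\chi_{J_k}(v)\leq C\,\widetilde{B_0}(2^k)\widetilde{\varphi}(2^{k+1})\chi_{J_k}(v)$. Combined with $\|\chi_{J_k}\|_E=\varphi_E(|J_k|)=\varphi_E(2^k)$ and the triangle inequality, this produces
$$\|\widetilde{B_0}\widetilde{\varphi}\|_{E(\ell(t),\infty)}\leq C\sum_{k\geq k_0}\widetilde{B_0}(2^k)\widetilde{\varphi}(2^{k+1})\varphi_E(2^k).$$
I would then bound each summand by $\int_{2^{k+1}}^{2^{k+2}}\widetilde{B_0}(v)\widetilde{\varphi}(v)\varphi_E(v)\,dv/v$: on the shifted block $[2^{k+1},2^{k+2}]$ the almost increase of $\widetilde{\varphi}$ yields $\widetilde{\varphi}(v)\geq C^{-1}\widetilde{\varphi}(2^{k+1})$, while the $\Delta_2$ conditions make $\widetilde{B_0}(v)$ and $\varphi_E(v)$ comparable to $\widetilde{B_0}(2^k)$ and $\varphi_E(2^k)$. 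Summing over $k\geq k_0$ (using $2^{k_0+1}>\ell(t)$) and reversing the change of variable yields the claim.

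The main obstacle is that the triangle-inequality step in the dyadic decomposition is a priori rather lossy; its success here depends on using $\Delta_2$-regularity of $\widetilde{B_0}$ and $\varphi_E$ to shift to the adjacent dyadic block $[2^{k+1},2^{k+2}]$ in order to profit from the almost increase of $\widetilde{\varphi}$ without paying anything. Crucially, no $\Delta_2$-type hypothesis is imposed on the merely almost-monotone $\varphi$. The remaining technicality is the routine splitting at $s=1$ and a direct crude estimate on the bounded transitional piece.
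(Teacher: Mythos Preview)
The paper does not supply its own proof of this lemma; it is quoted from \cite{FMS-3}. Your argument is correct in spirit and aligns well with the dyadic/discretization techniques used elsewhere in this paper (cf.\ Lemmas~\ref{lema8}--\ref{maxI1I2}). The core computation for $0<t\le1$ is clean: the logarithmic substitution $v=\ell(s)$ reduces matters to the $E$-norm on a half-line, and the shift to the right-adjacent block $[2^{k+1},2^{k+2}]$ neatly avoids imposing any $\Delta_2$ hypothesis on $\varphi$ while still exploiting its almost monotonicity.

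The one step that needs tightening is the case $t>1$. On the piece $(1,t)$ the relevant associated function is $B_\infty$, on which part~i) imposes no index hypothesis---you only have the $\Delta_2$ relation inherited from $\b(t^2)\sim\b(t)$. Saying that ``a direct estimate suffices since $\b\in SV$'' understates what is required, because the implicit constant must be uniform in $t$ and the interval $(1,t)$ is unbounded as $t\to\infty$. The same dyadic machinery does work there, though: after the substitution $w=1+\log s$ the function $\widetilde\varphi_\infty(w)=\varphi(e^{w-1})$ is almost \emph{decreasing}, so one should shift to the \emph{left}-adjacent block $[2^{k-1},2^k]$ instead of the right one, and the single boundary term at $k=0$ (of size $\sim\b(1)\varphi(1)\varphi_E(1)$) is absorbed by the portion $\int_{1/2}^{1}\b(s)\varphi(s)\varphi_E(\ell(s))\,\tfrac{ds}{s\ell(s)}$ of the right-hand side. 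With this patch the argument is complete.
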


\begin{lem}\cite[Lemma 3.6]{FMS-3}\label{limit Hardy}
Let $E$ be an r.i. space and let $\b\in SV$ such that $\b(t^2)\sim\b(t)$, with associated functions $\B_0$ and $\B_\infty$ defined in (\ref{fb}).
\begin{itemize}
\item[i)] If $\rho_{\B_{0}} < 0<\pi_{\B_{\infty}}$, then 
$$
\Big \| \b(t)  \int_{0}^{t} f(s)\, ds \Big \|_{\widehat{E}}  
 \lesssim \big \|t \b(t) f(t)  \ell(t) \big \|_{\widehat{E}}$$
for any non-negative measurable function $f$ on $(0,\infty)$.

\item[ii)] If %, on the contrary, the indices of $\B_0$ and $\B_\infty$ satisfy 
$\rho_{\B_{\infty}} <0<\pi_{\B_{0}} $, then 
$$  \Big \| \b(t)  \int_{t}^{\infty}  f(s) \,  ds \Big \|_{\widehat{E}}   
  \lesssim \big \|t \b(t)  f(t)  \ell(t) \big \|_{\widehat{E}}$$
for any non-negative measurable function $f$ on $(0,\infty)$.
\end{itemize}
\end{lem}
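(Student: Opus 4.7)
The plan is to reduce each of (i) and (ii) to a weighted Hardy-type inequality on the r.i.\ space $E$ via the change-of-variables formulas for $\widehat{E}$ recalled in Section~\ref{preliminaries}. For (ii), the substitution $t\mapsto 1/t$ exchanges $\int_0^t$ with $\int_t^\infty$ and swaps the roles of $B_0$ and $B_\infty$, so (ii) will follow from (i) after a relabeling, and I will therefore concentrate on (i).

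First I would split
$$
\Bigl\|\b(t)\int_0^t f(s)\,ds\Bigr\|_{\widehat{E}} \;\sim\; \Bigl\|\b(t)\int_0^t f\Bigr\|_{\widehat{E}(0,1)} + \Bigl\|\b(t)\int_0^t f\Bigr\|_{\widehat{E}(1,\infty)}
$$
and handle the two halves separately. On the $(0,1)$-part I apply the change of variables $t=e^{1-e^u}$, $u\in(0,\infty)$, together with the analogous substitution $s=e^{1-e^w}$ inside the inner integral. Direct computation gives $\b(t)=B_0(e^{-u})$, $\ell(t)=e^u$ and
$$
\int_0^{e^{1-e^u}} f(s)\,ds \;=\; \int_u^\infty \phi(w)\,dw, \qquad \phi(w):=e^{1-e^w}e^w f(e^{1-e^w}),
$$
while the right-hand side integrand $t\b(t)f(t)\ell(t)$ transforms precisely into $B_0(e^{-u})\phi(u)$. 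Thus the $(0,1)$-part of (i) reduces to the weighted inequality
$$
\Bigl\|B_0(e^{-u})\int_u^\infty \phi(w)\,dw\Bigr\|_{E} \;\lesssim\; \bigl\|B_0(e^{-u})\phi(u)\bigr\|_{E}, \qquad \phi\geq 0.
$$
On the $(1,\infty)$-part I use $t=e^{e^u-1}$ and split $\int_0^t f=\int_0^1 f+\int_1^t f$; the second summand reduces in exactly the same way to the analogous Hardy inequality with weight $B_\infty(e^{-u})$, whereas the constant contribution $\bigl(\int_0^1 f\bigr)\,\b(t)$ must be separately controlled on $\widehat{E}(1,\infty)$ by using $\pi_{B_\infty}>0$ to estimate $\|\b\|_{\widehat{E}(1,\infty)}$ and pairing it with $\|t\b(t)f(t)\ell(t)\|_{\widehat{E}(0,1)}$ via a Lemma~\ref{lema2.3}-type bound.

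The remaining weighted inequality is $\|Tg\|_E\lesssim\|g\|_E$ with $g:=B_0(e^{-\cdot})\phi$ and
$$
Tg(u)=\int_u^\infty K(u,w)g(w)\,dw,\qquad K(u,w)=\frac{B_0(e^{-u})}{B_0(e^{-w})}\,\chi_{\{u<w\}}.
$$
Since $\rho_{B_0}<0$, I may pick $\sigma$ with $\rho_{B_0}<\sigma<0$, so that $B_0(v)/v^{\sigma}$ is almost decreasing on $(0,1]$; applying this with $v=e^{-u}>e^{-w}$ yields
$$
K(u,w) \;\lesssim\; \Bigl(\frac{e^{-u}}{e^{-w}}\Bigr)^{\sigma} = e^{-|\sigma|(w-u)}, \qquad u<w.
$$
Hence $T$ is dominated by convolution on $(0,\infty)$ with the $L^1$-kernel $y\mapsto e^{-|\sigma|y}\chi_{\{y>0\}}$, and such convolution operators are bounded on every r.i.\ Banach function space $E$ by Young's inequality on the endpoints $L_1$ and $L_\infty$ together with the fact that $E$ is an interpolation space between them. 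The analogous computation with $\pi_{B_\infty}>0$ disposes of the $(1,\infty)$-piece.

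The principal technical obstacle I anticipate is the cross term $\b(t)\int_0^1 f$ produced on the $(1,\infty)$-side when one splits the primitive: absorbing this constant into the right-hand side requires the precise decay of $\b$ at infinity encoded in $\pi_{B_\infty}>0$, and it is here that the asymmetry between $\rho_{B_0}<0$ and $\pi_{B_\infty}>0$ genuinely plays a role, since without it the $\widehat{E}(1,\infty)$-norm of $\b$ itself could be infinite. A secondary obstacle is justifying that exponential-decay convolution kernels act boundedly on an arbitrary r.i.\ space $E$ on $(0,\infty)$; this relies on the Fatou property together with the Calder\'on--Mityagin interpolation principle, and in fact yields the sharp logarithmic loss $\ell(t)$ that appears on the right-hand side.
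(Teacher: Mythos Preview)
The paper does not prove this lemma; it is quoted verbatim from \cite[Lemma~3.6]{FMS-3}, so there is nothing here to compare your argument against. That said, your approach is sound and is in fact the natural one: the substitutions $t=e^{1-e^{u}}$ and $t=e^{e^{u}-1}$ carry $\widehat{E}(0,1)$ and $\widehat{E}(1,\infty)$ isometrically onto $E(0,\infty)$, the index hypotheses convert the Hardy kernel into one dominated by $e^{-|\sigma|\,|u-w|}$, and boundedness on $E$ then follows by interpolation between the endpoints $L_{1}$ and $L_{\infty}$. The symmetry $t\mapsto 1/t$ that reduces (ii) to (i) is also correct, since it interchanges $B_{0}$ with $B_{\infty}$ and preserves the measure $dt/(t\ell(t))$.

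Two places deserve sharpening. For the cross term $(\int_{0}^{1}f)\,\b(t)$ on $\widehat{E}(1,\infty)$, the appeal to Lemma~\ref{lema2.3} is off target; what you actually need is: from $\pi_{B_{\infty}}>0$ one gets $B_{\infty}(e^{-u})\lesssim e^{-\sigma u}\in L_{1}\cap L_{\infty}\subset E$, hence $\|\b\|_{\widehat{E}(1,\infty)}<\infty$; and from $\rho_{B_{0}}<0$ one gets $1/B_{0}(e^{-u})\lesssim e^{-|\sigma|u}\in L_{1}\cap L_{\infty}\subset E'$, so H\"older gives $\int_{0}^{1}f=\int_{0}^{\infty}\phi\leq \|1/B_{0}(e^{-\cdot})\|_{E'}\,\|B_{0}(e^{-\cdot})\phi\|_{E}$. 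Second, your final remark that the convolution bound ``yields the sharp logarithmic loss $\ell(t)$'' is misleading: the factor $\ell(t)$ on the right-hand side comes entirely from the Jacobian of the change of variables (the passage from the measure $dt/(t\ell(t))$ to $du$), not from the operator norm of the exponential convolution.
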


\subsection{Technical lemmas}
This subsection contains the most technical lemmas that we shall need in the proof of the main result of this paper.

 The first one is a weighted Hardy type inequality, which is well known for Lebesgue sequence spaces  $e=\ell_q$, $0<q\leq\infty$ (see \cite{GP,GHS}).
 In our case, we shall need it for any r.i. sequence space $\textbf{e}$, that is, spaces of the form $\textbf{e}=(\ell_1,\ell_\infty)^K_D$ for some suitable choice of the parameter $D$.
\begin{lem}\label{lema8}
Let $(\sigma_k)_{k\in\Z}$ be a non-negative sequence and let $\textbf{e}$ be an r.i. sequence space.
\begin{itemize}
\item[i)] If $\sup_{k\in\Z}\frac{\sigma_{k+1}}{\sigma_k}<1$, then 
\begin{equation}\label{e1lema8}
\Big\|\Big(\sigma_k\sum_{m\leq k}x_m\Big)_{k\in\Z}\Big\|_{\textbf{e}}\sim \|(\sigma_k x_k)_{k\in\Z}\|_{\textbf{e}}
\end{equation}
for any non-negative sequence $(x_k)_{k\in\Z}$.
\item[ii)] If $\inf_{k\in\Z}\frac{\sigma_{k+1}}{\sigma_k}>1$, then 
\begin{equation}\label{e2lema8}
\Big\|\Big(\sigma_k\sum_{m\geq k}x_m\Big)_{k\in\Z}\Big\|_{\textbf{e}}\sim \|(\sigma_k x_k)_{k\in\Z}\|_{\textbf{e}}
\end{equation}
for any non-negative sequence $(x_k)_{k\in\Z}$.
\end{itemize}
\end{lem}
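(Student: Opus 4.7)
I plan to prove both parts by the classical trick of bounding the cumulative sum by a geometrically weighted sum of shifts of $(\sigma_k x_k)_{k\in\Z}$, and then using the translation invariance of any r.i. sequence norm (since a shift preserves the distribution function, and hence the non-increasing rearrangement, of a sequence on $\Z$). One direction of each equivalence is immediate: since all terms are non-negative, $\sigma_k x_k\le \sigma_k\sum_{m\le k}x_m$ pointwise for part (i), and similarly $\sigma_k x_k\le \sigma_k\sum_{m\ge k}x_m$ for (ii), so the r.i. lattice property of $\textbf{e}$ gives $\|(\sigma_k x_k)\|_{\textbf{e}}\lesssim \|(\sigma_k\sum_{m\le k}x_m)\|_{\textbf{e}}$ and the analogous inequality for (ii).

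For the non-trivial direction of (i), set $q=\sup_k\sigma_{k+1}/\sigma_k<1$. Iterating $\sigma_{k+1}\le q\,\sigma_k$ yields $\sigma_k\le q^{k-m}\sigma_m$ whenever $m\le k$, and therefore
\[
\sigma_k\sum_{m\le k}x_m \;\le\; \sum_{m\le k} q^{k-m}\sigma_m x_m \;=\; \sum_{j\ge 0} q^{j}\,\sigma_{k-j}x_{k-j}.
\]
Taking the $\textbf{e}$-norm in $k$, applying the triangle inequality (valid because $\textbf{e}$ is a Banach function space with the Fatou property, which together with the lattice property permits term-by-term estimation and passage of $\|\cdot\|_{\textbf{e}}$ inside a non-negative sum by monotone convergence), and using that the shift $k\mapsto k-j$ is an isometry of $\textbf{e}$, gives
\[
\Big\|\Big(\sigma_k\sum_{m\le k}x_m\Big)_{k\in\Z}\Big\|_{\textbf{e}} \;\le\; \sum_{j\ge 0} q^{j}\,\|(\sigma_{k-j}x_{k-j})_{k\in\Z}\|_{\textbf{e}} \;=\; \frac{1}{1-q}\,\|(\sigma_k x_k)_{k\in\Z}\|_{\textbf{e}}.
\]

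For (ii), set $c=\inf_k\sigma_{k+1}/\sigma_k>1$. The same iteration gives $\sigma_k\le c^{-(m-k)}\sigma_m$ for $m\ge k$, hence
\[
\sigma_k\sum_{m\ge k}x_m \;\le\; \sum_{j\ge 0} c^{-j}\,\sigma_{k+j}x_{k+j},
\]
and the same Minkowski/shift-invariance argument, now with ratio $1/c<1$, yields the bound by a convergent geometric series times $\|(\sigma_k x_k)\|_{\textbf{e}}$.

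The only point that requires genuine care is the exchange of the $\textbf{e}$-norm with the infinite sum in $j$. The simplest route is to truncate: prove the inequality first for finitely supported $(x_k)$ (where the sum in $j$ is finite and the triangle inequality applies directly), and then pass to the limit on the left using the Fatou property of $\textbf{e}$ and on the right the fact that the partial sums form an increasing sequence converging pointwise to $\sigma_k x_k$ (times a constant). This is the main technical step; the geometric estimates on $\sigma_k$ and the translation invariance of the r.i. norm are standard.
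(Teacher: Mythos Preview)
Your proof is correct and follows a genuinely different route from the paper's. The paper proves the non-trivial inequality in (i) by introducing the linear operator
\[
R\big((y_k)_{k\in\Z}\big)=\Big(\sigma_k\sum_{m\le k}\frac{y_m}{\sigma_m}\Big)_{k\in\Z},
\]
checking directly that $R$ is bounded on $\ell_1$ and on $\ell_\infty$ with norm at most $1/(1-\sigma)$ (where $\sigma=\sup_k\sigma_{k+1}/\sigma_k$), and then invoking the fact that every r.i.\ sequence space $\textbf{e}$ is an exact interpolation space for the couple $(\ell_1,\ell_\infty)$; the conclusion follows by substituting $y_k=\sigma_k x_k$. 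Part (ii) is handled symmetrically. Your argument instead works directly in $\textbf{e}$: the geometric decay of $\sigma$ gives the pointwise bound $\sigma_k\sum_{m\le k}x_m\le\sum_{j\ge0}q^{j}\sigma_{k-j}x_{k-j}$, after which you use that shifts are isometries of any r.i.\ sequence norm together with the countable triangle inequality for non-negative terms (justified by the Fatou property). Both arguments yield the same constant $1/(1-q)$. Your approach is more elementary in that it bypasses interpolation theory entirely, relying only on the lattice and rearrangement-invariance structure of $\textbf{e}$; the paper's approach is shorter once the interpolation framework is in place and transfers immediately to any operator bounded at the endpoints. One minor wording slip in your final paragraph: the partial sums $\sum_{j=0}^N q^{j}\sigma_{k-j}x_{k-j}$ increase pointwise to the full series, not to ``$\sigma_k x_k$ times a constant''; but the Fatou argument you outline is the right one, and the preliminary reduction to finitely supported $x$ is in fact unnecessary.
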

\begin{proof}
Let $\sigma:=\sup_{k\in\Z}\frac{\sigma_{k+1}}{\sigma_k}<1$. The estimate ``$\gtrsim$" in \eqref{e1lema8} is clearly true, so only the estimate ``$\lesssim$'' has to be proved. 
We consider the operator $R:\textbf{e}\rightarrow\textbf{e}$ defined by
$$R((y_k)_{k\in\Z})=\Big(\sigma_k\sum_{m\leq k}\frac{y_m}{\sigma_m}\Big)_{k\in\Z}.$$
We claim that $R$ is bounded in $\ell_1$ and in $\ell_\infty$. Actually, let $(y_k)_{k\in\Z}\in\ell_1$, then
\begin{align*}
\|R((y_k)_{k\in\Z})\|_{\ell_1}&=\sum_{k\in\Z}\Big|\sigma_k\sum_{m\leq k}\frac{y_m}{\sigma_m}\Big|\leq \sum_{k\in\Z}\sigma_k\sum_{m\leq k}\frac{|y_m|}{\sigma_m}
=\sum_{m\in\Z}\frac{|y_m|}{\sigma_m}\sum_{k\geq m}\sigma_k\\
&\leq \sum_{m\in\Z}|y_m|(1+\sigma+\sigma^{2}+\ldots)=\frac{1}{1-\sigma}\|(y_m)_{m\in\Z}\|_{\ell_1}.
\end{align*}
Take now $(y_k)_{k\in\Z}\in\ell_\infty$, then
\begin{align*}
\|R((y_k)_{k\in\Z})\|_{\ell_\infty}&=\sup_{k\in\Z}\Big|\sigma_k\sum_{m\leq k}\frac{y_m}{\sigma_m}\Big|\leq \|(y_m)_{m\in\Z}\|_{\ell_\infty}\sup_{k\in\Z}\Big\{\sigma_k\sum_{m\leq k}\frac{1}{\sigma_m}\Big\}\\
&\leq \|(y_m)_{m\in\Z}\|_{\ell_\infty}(1+\sigma+\sigma^{2}+\ldots)=\frac{1}{1-\sigma}\|(y_m)_{m\in\Z}\|_{\ell_\infty}.
\end{align*}
Since $\textbf{e}$ is an interpolation space for the couple $(\ell_1,\ell_\infty)$ we have that the operator $R$ is bounded in $\textbf{e}$. To complete the proof of i) it suffices to take $y_k=\sigma_kx_k$, $k\in\Z$. 
The proof of ii) can be done in a similar way.
\end{proof}

In next lemmas we shall consider the following sequence 
\begin{equation}\label{lambda}
\lambda_{k} = \begin{cases}
 e^{1-e^{-k}}& \text{ if }  -k \in \N ,\\
            e^{e^{k}-1} & \text{ if }  k \in \N \cup \{0\}.
                \end{cases}
\end{equation}
Observe that the special slowly varying functions we are working with, $\b\in SV$ such that $\b(t^2)\sim\b(t)$,
satisfy that
\begin{equation}\label{equiv}
\b(\lambda_{k-1})\sim\b(t)\sim \b(\lambda_k)\quad \mbox{for}\ t\in[\lambda_{k-1},\lambda_k],
\end{equation}
with the equivalent constants  being independent of $k\in\Z$. 

\begin{lem}\label{lema29}
Let $(\lambda_k)_{k\in\Z}$ be the sequence defined by \eqref{lambda} and let $\b\in SV$ such that $\b(t^2)\sim\b(t)$, with associated functions $\B_0$ and $\B_\infty$ defined in \eqref{fb}.

\begin{itemize}
\item[i)] If $\rho_{B_0}<0<\pi_{B_\infty}$, then there exist a measurable function $\Phi$ equivalent to $\b$ such that $\sup_{k\in\Z}\frac{\Phi(\lambda_{k+1})}{\Phi(\lambda_k)}<1$.
\item[ii)] If $\rho_{B_\infty}<0<\pi_{B_0}$, then there exist a measurable function $\Psi$ equivalent to $\b$ such that $\inf_{k\in\Z}\frac{\Psi(\lambda_{k+1})}{\Psi(\lambda_k)}>1$.
\end{itemize}
\end{lem}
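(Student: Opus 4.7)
My plan is to prove (i) in detail; (ii) follows by a symmetric construction. The strategy is to define $\Phi$ only on the discrete nodes $\{\lambda_k:k\in\Z\}$ via a supremum formula that builds in geometric decay, and then extend $\Phi$ to all of $(0,\infty)$ as a step function, using the equivalence \eqref{equiv} to transfer $\Phi\sim\b$ from the nodes to the whole semi-axis.

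First I translate the hypotheses $\rho_{B_0}<0<\pi_{B_\infty}$ into quantitative decay of the sequence $(\b(\lambda_k))_{k\in\Z}$. Since $\b(\lambda_k)=B_\infty(e^{-k})$ for $k\geq 0$ and $\b(\lambda_k)=B_0(e^k)$ for $k\leq 0$, pick $0<\sigma<\pi_{B_\infty}$ and $0<\tau<-\rho_{B_0}$. By the almost monotonicity recalled right after property (vi) of \S 2, $B_\infty(t)/t^\sigma$ is almost increasing and $t^\tau B_0(t)$ is almost decreasing on $(0,1]$; specialising to $t_1=e^{-j}$, $t_2=e^{-k}$ (resp.\ $t_1=e^k$, $t_2=e^j$) gives
\[
\b(\lambda_j)\lesssim e^{-(j-k)\sigma}\b(\lambda_k)\quad\text{for }j\geq k\geq 0,\qquad \b(\lambda_j)\lesssim e^{-(j-k)\tau}\b(\lambda_k)\quad\text{for }k\leq j\leq -1.
\]
Chaining these estimates through the fixed pair $\lambda_{-1},\lambda_0$ and using that $\b(\lambda_{-1})/\b(\lambda_0)$ is a finite constant by slow variation yields the mixed-regime bound $\b(\lambda_j)/\b(\lambda_k)\lesssim e^{-j\sigma+(k+1)\tau}$ for $j\geq 0$ and $k\leq -1$.

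Now fix $c\in(0,1)$ with $\gamma:=-\ln c<\min(\sigma,\tau)$ and define
\[
\Phi(\lambda_k):=\sup_{j\geq k}\b(\lambda_j)\,c^{\,k-j},\qquad k\in\Z.
\]
Taking $j=k$ gives $\Phi(\lambda_k)\geq\b(\lambda_k)$; conversely, multiplying each of the three estimates above by $c^{k-j}=e^{\gamma(j-k)}$ and using $\gamma<\min(\sigma,\tau)$, the resulting exponent is uniformly bounded above in each of the three regimes ($j\geq k\geq 0$; $k\leq j\leq -1$; $k\leq -1\leq 0\leq j$), yielding $\Phi(\lambda_k)\leq C\b(\lambda_k)$ with $C$ independent of $k$. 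Geometric decay is then automatic:
\[
c\,\Phi(\lambda_k)=\sup_{j\geq k}\b(\lambda_j)\,c^{\,k+1-j}=\max\bigl(c\b(\lambda_k),\Phi(\lambda_{k+1})\bigr)\geq\Phi(\lambda_{k+1}),
\]
so $\sup_{k\in\Z}\Phi(\lambda_{k+1})/\Phi(\lambda_k)\leq c<1$. Extending $\Phi$ to $(0,\infty)$ by $\Phi(t):=\Phi(\lambda_k)$ for $t\in[\lambda_k,\lambda_{k+1})$ produces a measurable function, and \eqref{equiv} combined with $\Phi(\lambda_k)\sim\b(\lambda_k)$ gives $\Phi(t)\sim\b(t)$ on each such interval.

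Part (ii) is handled by the dual construction: with $0<\sigma<\pi_{B_0}$ and $0<\tau<-\rho_{B_\infty}$, $\b(\lambda_k)$ now grows geometrically in $k$, and I define $\Psi(\lambda_k):=\sup_{j\leq k}\b(\lambda_j)\,c^{\,j-k}$ for the same $c$ as above; the identity $c\,\Psi(\lambda_{k+1})=\max(\Psi(\lambda_k),c\b(\lambda_{k+1}))\geq\Psi(\lambda_k)$ then gives $\inf_k\Psi(\lambda_{k+1})/\Psi(\lambda_k)\geq 1/c>1$. I expect the main obstacle to be the mixed regime $k\leq -1\leq 0\leq j$ in the verification of $\Phi\lesssim\b$: the two extension index hypotheses control $\b$ separately on $(0,1]$ and $[1,\infty)$, so no uniform bound of the form $\b(\lambda_j)/\b(\lambda_k)\lesssim e^{-(j-k)\gamma}$ holds for a single $\gamma$, and the strict inequality $\gamma<\min(\sigma,\tau)$ is exactly what allows the constant boundary correction from chaining through $\lambda_{-1},\lambda_0$ to be absorbed into the equivalence constant.
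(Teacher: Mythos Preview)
Your proof is correct and takes a genuinely different route from the paper's. The paper argues more structurally: having chosen $\alpha,\beta$ with $\rho_{B_0}<\alpha<0<\beta<\pi_{B_\infty}$, it invokes the general fact that an almost monotone function is equivalent to a strictly monotone one to produce $\Phi_0\sim B_0$ and $\Phi_\infty\sim B_\infty$ with $t^{-\alpha}\Phi_0(t)$ strictly decreasing and $t^{-\beta}\Phi_\infty(t)$ strictly increasing, and then defines $\phi(\lambda_k)$ piecewise from these; the ratio bound $\sup_k\phi(\lambda_{k+1})/\phi(\lambda_k)\leq\max\{e^\alpha,e^{-\beta}\}<1$ is then immediate on each half-line. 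Your approach is more elementary and self-contained: you bypass the ``replace by a strictly monotone equivalent'' step entirely and construct the decreasing envelope $\Phi(\lambda_k)=\sup_{j\geq k}\b(\lambda_j)c^{k-j}$ directly from the sequence, so the geometric decay is automatic and the only work is the $\Phi\lesssim\b$ bound, which you handle carefully in all three regimes. A minor advantage of your argument is that the transition at $k=-1\to k=0$ is explicitly absorbed into the equivalence constant, whereas the paper's piecewise definition leaves this seam implicit. Conversely, the paper's argument is shorter once the monotone-replacement lemma from \cite{KPS} is taken for granted.
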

\begin{proof}
An straightforward computation shows that
\begin{equation*}
\b(\lambda_{k}) = \begin{cases}
 B_0(e^k)& \text{ if }  -k \in \N ,\\
            B_\infty(e^{-k}) & \text{ if }  k \in \N \cup \{0\}.
                \end{cases}
\end{equation*}
Assume now that $\rho_{B_0}<0<\pi_{B_\infty}$. Then, there exist $\alpha$, $\beta\in\R$ such that $\rho_{\B_{0}}<\alpha< 0<\beta <\pi_{\B_{\infty}}$. By properties of extension indices (see, e.g., \cite[Section II.1.2]{KPS}), the function $t^{-\alpha}B_0(t)$ is almost decreasing and $t^{-\beta}B_\infty(t)$ is almost increasing. This implies the existence of two functions $\Phi_0\sim B_0$ and $\Phi_\infty\sim B_\infty$ such that $t^{-\alpha}\Phi_0(t)$ is strictly decreasing while  $t^{-\beta}\Phi_\infty(t)$ is strictly increasing. So, it follows that the function 
\begin{equation*}
\phi(\lambda_{k}) = \begin{cases}
 \Phi_0(e^k)& \text{ if }  -k \in \N ,\\
            \Phi_\infty(e^{-k}) & \text{ if }  k \in \N \cup \{0\},
                \end{cases}
\end{equation*}	
satisfies that $\sup_{k\in\Z}\frac{\phi(\lambda_{k+1})}{\phi(\lambda_k)}< max\{e^\alpha,e^{-\beta}\}<1$. The argument to prove ii) is analogous.
\end{proof}

In the next two lemmas we shall need to work with the discretization of $\widehat{E}$ that we denote by $d\widehat{E}$. Let us explain more explicitly the definition of $d\widehat{E}$. Consider the intervals
\begin{equation*}
I_{k} = \begin{cases}
        (\lambda_{k - 1}, \lambda_{k}] \quad \text{ if }   -k  \in \N, \\
        (\lambda_{- 1}, \lambda_{0}) \quad \ \text{ if }  k=0, \\
        [\lambda_{k - 1}, \lambda_{k}) \quad  \text{ if }   k  \in \N
        \end{cases}
\end{equation*}
where $(\lambda_k)_{k\in\Z}$ is the sequence defined by \eqref{lambda}. Hence, $\bigcup_{k\in\Z} I_k$ is a decomposition of $(0,\infty)$ into disjoint intervals $I_k$  such that
$$\int_{I_k}\frac{dt}{t\ell(t)}=1, \quad \forall k\in\Z.$$
It is easy to prove that the operator
$$Tf=\Big(\int_{I_k}f(t)\frac{dt}{t\ell(t)}\Big)_{k\in\Z}$$
is bounded from $\widehat{L}_1$ into $\ell_1(\Z)$ and from $L_\infty$ into $\ell_{\infty}(\Z)$. By interpolation,  $T$ is bounded from $\widehat{E}=(\widehat{L}_1,L_\infty)_D^K$ into $d\widehat{E}:=(\ell_1(\Z),\ell_\infty(\Z))_D^K$ for the suitable choice of parameter $D$. Similarly, the operator
$$S((x_k)_{k\in\Z})=\sum_{k\in\Z}x_k\chi_{I_k}$$
is bounded from $\ell_1(\Z)$ into $\widehat{L}_1$ and from $\ell_{\infty}(\Z)$ into $L_\infty$. Then, using interpolation $S$ is bounded from $d\widehat{E}=(\ell_1(\Z),\ell_\infty(\Z))_D^K$ into $\widehat{E}=(\widehat{L}_1,L_\infty)_D^K$. Moreover, $TSx=x$ for all $x\in d\widehat{E}$ and $STf=f$ for all $f\in \widehat{E}^c$, where $\widehat{E}^c$ denotes the collection of all functions on $\widehat{E}$ constant on each interval $I_k$.
Hence we can identify  
 $\widehat{E}^c$ with the space $d\widehat{E}$  of all sequences $x=(x_k)_{k\in\Z}$  such that 
$$\|x\|_{d\widehat{E}}=\Big\|\sum_{k\in\Z}x_k\chi_{I_k}\Big\|_{\widehat{E}}<\infty.$$
The space $d\widehat{E}$ is an r.i. sequence space, that we shall called the \textit{discretization} of $\widehat{E}$.

\begin{lem}\label{key1}
Let $E$, $F$, $G$ be r.i. spaces and let  $a$, $\b\in SV$ such that $a(t^2)\sim a(t)$, $\b(t^2)\sim\b(t)$, with  associated functions $A_0$, $A_\infty$, $\B_0$ and $\B_\infty$ defined in (\ref{fb}).
\begin{itemize}
\item[i)] If $\rho_{A_{\infty}} < \pi_{\varphi_F} \leq \rho_{\varphi_F} < \pi_{A_{0}}$ and $\rho_{B_0}<0<\pi_{B_\infty}$, then
$$\big \|\b(u) \|f\|_{ \widetilde{G}(0,u)} \big\|_{\widehat{E}}\sim\Big \|\frac{\b(u)}{\|a\|_{\widetilde{F}(0,u)}} \big\| a(t) \|f\|_{ \widetilde{G}(t,u)}\big\|_{ \widetilde{F}(0,u)} \Big\|_{\widehat{E}}$$
for any measurable function $f$ on $(0,\infty)$.
\item[ii)] If $\rho_{A_0} < \pi_{\varphi_F} \leq \rho_{\varphi_F} < \pi_{A_\infty}$ and $\rho_{B_\infty}<0<\pi_{B_0}$, then
$$\big \|\b(u) \|f\|_{ \widetilde{G}(u,\infty)} \big\|_{\widehat{E}}\sim\Big \|\frac{\b(u)}{\|a\|_{\widetilde{F}(u,\infty)}} \big\| a(t) \|f\|_{ \widetilde{G}(u,t)}\big\|_{ \widetilde{F}(u,\infty)} \Big\|_{\widehat{E}}$$
for any  measurable function $f$ on $(0,\infty)$.
\end{itemize}
\end{lem}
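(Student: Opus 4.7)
\textbf{Plan of proof for Lemma \ref{key1}(i); part (ii) is analogous by the substitution $u\mapsto 1/u$.}

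The direction ``$\gtrsim$'' is immediate from monotonicity of the $\widetilde G$-norm under restriction: for $0<t<u$, $\|f\|_{\widetilde G(t,u)}\le\|f\|_{\widetilde G(0,u)}$, so
$$\|a(t)\|f\|_{\widetilde G(t,u)}\|_{\widetilde F(0,u)}\le\|f\|_{\widetilde G(0,u)}\,\|a\|_{\widetilde F(0,u)}.$$
Dividing by $\|a\|_{\widetilde F(0,u)}$, multiplying by $\b(u)$ and taking $\widehat E$-norms gives the $\gtrsim$ inequality with constant $1$.

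For the hard direction ``$\lesssim$'', the strategy is to discretize both sides on the intervals $I_k$ with breakpoints $\lambda_k$ from \eqref{lambda}, and then invoke a weighted Hardy-type inequality in the r.i.~sequence space $d\widehat E$. Set $h(u):=\|f\|_{\widetilde G(0,u)}$. The hypothesis $\rho_{A_\infty}<\pi_{\varphi_F}\le\rho_{\varphi_F}<\pi_{A_0}$ allows Lemma \ref{lema2.3}(i) to replace the normalisation factor by $\|a\|_{\widetilde F(0,u)}\sim a(u)\varphi_F(\ell(u))$. The conditions $\b(t^2)\sim\b(t)$ and $a(t^2)\sim a(t)$, via \eqref{equiv}, guarantee that $\b(u)\sim\b(\lambda_k)$ on $I_k$ and $a(t)\sim a(\lambda_j)$ on $I_j$, so the continuous $\widehat E$-norm in $u$ can be replaced by its discretized version in $d\widehat E$, and the inner $\widetilde F(0,u)$-norm in $t$ is, up to equivalent constants, a sum over the intervals $I_j$, $j\le k$. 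Using the triangle inequality $h(u)\le h(t)+\|f\|_{\widetilde G(t,u)}$ for $t<u$ and the fact that $h$ is monotone non-decreasing (hence equivalent on each $I_k$ to its value at the endpoint $\lambda_k$ up to terms already present on the right-hand side), the LHS is reduced to $\|(\b(\lambda_k)h(\lambda_k))_k\|_{d\widehat E}$ and the RHS to a sequence expression of the form
$$\Bigl\|\Bigl(\tfrac{\b(\lambda_k)}{a(\lambda_k)\varphi_F(\ell(\lambda_k))}\,\bigl\|\bigl(a(\lambda_j)\|f\|_{\widetilde G(\lambda_j,\lambda_k)}\bigr)_{j\le k}\bigr\|_{\widetilde F}\Bigr)_k\Bigr\|_{d\widehat E}.$$

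The final step is the Hardy inequality. The hypothesis $\rho_{B_0}<0<\pi_{B_\infty}$, via Lemma \ref{lema29}(i), furnishes a sequence $\sigma_k\sim\b(\lambda_k)$ with $\sup_k\sigma_{k+1}/\sigma_k<1$; Lemma \ref{lema8}(i) then delivers
$$\Bigl\|\Bigl(\sigma_k\sum_{m\le k}x_m\Bigr)_k\Bigr\|_{d\widehat E}\lesssim\|(\sigma_k x_k)_k\|_{d\widehat E}$$
for any non-negative sequence, which is exactly what is needed to convert the averaged $t$-expression on the discretized RHS back into $h(\lambda_k)=\|f\|_{\widetilde G(0,\lambda_k)}$ on the LHS.

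\textbf{Main obstacle.} The technical crux is the reduction to a single sequence Hardy inequality. Two issues need care: (a) the space $\widetilde G$ is an arbitrary r.i.~space, so $\|f\|_{\widetilde G(0,\lambda_k)}$ does not decompose additively across the disjoint pieces $I_j$, $j\le k$ (this is automatic only for the extremes $G=L_1$ or $G=L_\infty$), so one must treat the $\widetilde G$-norms of tails $\|f\|_{\widetilde G(\lambda_j,\lambda_k)}$ as a \emph{decreasing in $j$} sequence and exploit only monotonicity; and (b) the $\widetilde F$-norm on the inner variable uses the measure $dt/t$, while the outer $\widehat E$-norm uses $dt/t\ell(t)$, so one must unify both inside a single sequence framework where Lemma \ref{lema8} applies. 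The slowly varying hypotheses on $a$, $\b$, together with Lemmas \ref{lema2.3} and \ref{lema29}, are designed precisely to align these two measures and to produce the geometric ratio $\sigma_{k+1}/\sigma_k<1$ that the sequence Hardy bound requires.
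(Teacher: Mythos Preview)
Your plan is essentially the paper's: the ``$\gtrsim$'' direction is trivial, and for ``$\lesssim$'' one discretizes the outer $\widehat E$-norm along the breakpoints $\lambda_k$, invokes Lemma~\ref{lema29} to get a geometric sequence equivalent to $\b(\lambda_k)$, and applies the Hardy-type Lemma~\ref{lema8} in $d\widehat E$. So the architecture is correct.

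Two places where you have overcomplicated matters and where the paper's execution is cleaner:

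\textbf{Your obstacle (a) is not an obstacle.} In any Banach function space one has the triangle inequality
\[
\|f\|_{\widetilde G(0,\lambda_k)}\le\sum_{j\le k}\|f\|_{\widetilde G(\lambda_{j-1},\lambda_j)},
\]
which is exactly the additive decomposition you say is unavailable. The paper uses this directly: it bounds the discretized LHS by $\bigl\|(\b(\lambda_k)\sum_{j\le k}\|f\|_{\widetilde G(\lambda_{j-1},\lambda_j)})_k\bigr\|_{d\widehat E}$, then applies Lemma~\ref{lema8}(i) with $x_j=\|f\|_{\widetilde G(\lambda_{j-1},\lambda_j)}$ to collapse the sum to the single local piece $\|f\|_{\widetilde G(\lambda_{k-1},\lambda_k)}$. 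No monotone-tail argument is needed.

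\textbf{Your obstacle (b) is avoided rather than solved.} The paper never discretizes the inner $\widetilde F$-norm, so the mismatch of measures you worry about does not arise. After the Hardy step one is left with $\|(\b(\lambda_k)\|f\|_{\widetilde G(\lambda_{k-1},\lambda_k)})_k\|_{d\widehat E}$; to bound this by the RHS one simply observes that for $t\le\lambda_{k-1}$ we have $\|f\|_{\widetilde G(\lambda_{k-1},\lambda_k)}\le\|f\|_{\widetilde G(t,\lambda_k)}$, so
\[
\|a\|_{\widetilde F(0,\lambda_{k-1})}\,\|f\|_{\widetilde G(\lambda_{k-1},\lambda_k)}\le\bigl\|a(t)\|f\|_{\widetilde G(t,\lambda_k)}\bigr\|_{\widetilde F(0,\lambda_{k-1})},
\]
and the hypothesis on $a$ (via Lemma~\ref{lema2.3}(i)) guarantees $\|a\|_{\widetilde F(0,u)}\sim\|a\|_{\widetilde F(0,u^2)}$, hence $\|a\|_{\widetilde F(0,\lambda_{k-1})}\sim\|a\|_{\widetilde F(0,\lambda_k)}\sim\|a\|_{\widetilde F(0,u)}$ for $u\in I_{k+1}$. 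This pointwise-in-$k$ estimate then undiscretizes back to the RHS. Your proposed route of turning the $\widetilde F$-norm into a discrete sum indexed by $j\le k$ would require a second, nested Hardy argument and a reconciliation of the $dt/t$ and $dt/(t\ell(t))$ grids; this is unnecessary.
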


\begin{proof}
We observe that the inequality ``$\gtrsim$'' always holds, so in both cases only the inequality ``$\lesssim$'' has to be proved.
Let us begin with the proof of i). Using \eqref{equiv} and the discretization of the norm in $\widehat{E}$ we derive
\begin{align}\label{e34}
\big \|\b(u)  \|f\|_{ \widetilde{G}(0,u)} \big\|_{\widehat{E}}&=
\Big \|\sum_{k\in\Z}\b(u)  \|f\|_{ \widetilde{G}(0,u)}\chi_{I_k}(u) \Big\|_{\widehat{E}}\\
&\lesssim\Big \|\sum_{k\in\Z}\b(\lambda_k) \|f\|_{ \widetilde{G}(0,\lambda_k)}\chi_{I_k}(u) \Big\|_{\widehat{E}}\nonumber\\
&\leq\Big \|\sum_{k\in\Z}\b(\lambda_k)\Big(\sum_{j\leq k}\|f\|_{ \widetilde{G}(\lambda_{j-1},\lambda_j)}\Big)\chi_{I_k}(u) \Big\|_{\widehat{E}}\nonumber\\
&= \Big \|\Big(\b(\lambda_k) \sum_{j\leq k}\|f\|_{ \widetilde{G}(\lambda_{j-1},\lambda_j)}\Big)_{k\in\Z}\Big\|_{d\widehat{E}}.\nonumber
\end{align}
Now by Lemma \ref{lema29} i) ($\rho_{B_0}<0<\pi_{B_\infty}$), there exists an equivalent function of $\b$, that we denote in the same way, such that $\sup_{k\in\Z}\frac{\b(\lambda_{k+1})}{\b(\lambda_k)}<1$. Then, Lemma \ref{lema8} ii) implies that
$$
\Big \|\Big(\b(\lambda_k) \sum_{j\leq k}\|f\|_{ \widetilde{G}(\lambda_{j-1},\lambda_j)}\Big)_{k\in\Z}\Big\|_{d\widehat{E}}\sim
\Big \|\Big(\b(\lambda_k) \|f\|_{ \widetilde{G}(\lambda_{k-1},\lambda_k)}\Big)_{k\in\Z}\Big\|_{d\widehat{E}}.$$
Therefore, using again the discretization of the norm in $\widehat{E}$, we obtain
\begin{align*}
\Big \|\b(u)  \|f\|_{ \widetilde{G}(0,u)} \Big\|_{\widehat{E}}&\lesssim
\Big\|\Big(\b(\lambda_k)\|f\|_{ \widetilde{G}(\lambda_{k-1},\lambda_k)}\Big)_{k\in\Z} \Big\|_{d\widehat{E}}\\
&=\Big\|\sum_{k\in\Z}\b(\lambda_k)\|f\|_{ \widetilde{G}(\lambda_{k-1},\lambda_k)}\chi_{I_{k+1}}(u) \Big\|_{\widehat{E}}\\
&\sim\Big\|\sum_{k\in\Z}\frac{\b(u)}{\|a\|_{\widetilde{F}(0,u)}}\|a\|_{\widetilde{F}(0,u)}\|f\|_{ \widetilde{G}(\lambda_{k-1},\lambda_k)}\chi_{I_{k+1}}(u) \Big\|_{\widehat{E}}.
\end{align*}
Additionally, Lemma \ref{lema2.3} i) gives that $\|a\|_{\widetilde{F}(0,u)}\sim \a(u)\varphi_{F}(\ell(u))$. Hence, $\|a\|_{\widetilde{F}(0,u)}$ is a slowly varying function such that $\|a\|_{\widetilde{F}(0,u^2)}\sim \|a\|_{\widetilde{F}(0,u)}$. Thus, for any $u\in I_{k+1}$ we have that $\|a\|_{\widetilde{F}(0,u)}\sim\|a\|_{\widetilde{F}(0,\lambda_k)}\sim\|a\|_{\widetilde{F}(0,\lambda_{k-1})}$, $k\in\Z$. Moreover,
\begin{align*}
\|a\|_{\widetilde{F}(0,\lambda_{k-1})}\|f\|_{ \widetilde{G}(\lambda_{k-1},\lambda_k)}&=
\Big\|a(t)\|f\|_{ \widetilde{G}(\lambda_{k-1},\lambda_k)}\Big\|_{\widetilde{F}(0,\lambda_{k-1})}\lesssim\Big\|a(t)\|f\|_{ \widetilde{G}(t,\lambda_k)}\Big\|_{\widetilde{F}(0,\lambda_{k-1})}\\&\leq\Big\|a(t)\|f\|_{ \widetilde{G}(t,\lambda_k)}\Big\|_{\widetilde{F}(0,\lambda_{k})}.
\end{align*}
Summing up,
\begin{align*}
\Big \|\b(u)  \|f\|_{ \widetilde{G}(0,u)} \Big\|_{\widehat{E}}&\lesssim
\bigg\|\sum_{k\in\Z}\frac{\b(u)}{\|a\|_{\widetilde{F}(0,u)}}\Big\|a(t)\|f\|_{ \widetilde{G}(t,\lambda_{k})}\Big\|_{\widetilde{F}(0,\lambda_{k})}\chi_{I_{k+1}}(u) \bigg\|_{\widehat{E}}\\
&\lesssim
\bigg\|\sum_{k\in\Z}\frac{\b(u)}{\|a\|_{\widetilde{F}(0,u)}}\Big\|a(t)\|f\|_{ \widetilde{G}(t,u)}\Big\|_{\widetilde{F}(0,u)}\chi_{I_{k+1}}(u) \bigg\|_{\widehat{E}}\\
&=
\bigg\|\frac{\b(u)}{\|a\|_{\widetilde{F}(0,u)}}\Big\|a(t)\|f\|_{ \widetilde{G}(t,u)}\Big\|_{\widetilde{F}(0,u)} \bigg\|_{\widehat{E}}.
\end{align*}
This concludes the proof of i).

The proof of ii) can be done in a similar vein. Indeed, arguing as  in \eqref{e34} we have 
\begin{align*}
\big \|\b(u)  \|f\|_{ \widetilde{G}(u,\infty)} \big\|_{\widehat{E}}&=
\Big \|\sum_{k\in\Z}\b(u)  \|f\|_{ \widetilde{G}(u,\infty)}\chi_{I_{k+1}}(u) \Big\|_{\widehat{E}}\\
&\lesssim\Big \|\sum_{k\in\Z}\b(\lambda_k) \|f\|_{ \widetilde{G}(\lambda_{k},\infty)}\chi_{I_{k+1}}(u) \Big\|_{\widehat{E}}\\
&\leq\Big \|\sum_{k\in\Z}\b(\lambda_k)\Big(\sum_{j\geq k}\|f\|_{ \widetilde{G}(\lambda_{j},\lambda_{j+1})}\Big)\chi_{I_{k+1}}(u) \Big\|_{\widehat{E}}\\
&= \Big \|\Big(\b(\lambda_k) \sum_{j\geq k}\|f\|_{ \widetilde{G}(\lambda_{j},\lambda_{j+1})}\Big)_{k\in\Z}\Big\|_{d\widehat{E}}.
\end{align*}
This time, $\rho_{B_\infty}<0<\pi_{B_0}$. Then, by Lemma \ref{lema29} ii), there exists an  equivalente function of $\b$, that again we denote  in the same way, such that $\inf_{k\in\Z}\frac{\b(\lambda_{k+1})}{\b(\lambda_k)}>1$ and using Lemma \ref{lema8} ii) we obtain the relation
$$
\Big \|\Big(\b(\lambda_k) \sum_{j\geq k}\|f\|_{ \widetilde{G}(\lambda_{j},\lambda_{j+1})}\Big)_{k\in\Z}\Big\|_{d\widehat{E}}\sim
\Big \|\Big(\b(\lambda_k) \|f\|_{ \widetilde{G}(\lambda_{k},\lambda_{k+1})}\Big)_{k\in\Z}\Big\|_{d\widehat{E}}.$$
Hence,
\begin{align*}
\Big \|\b(u)  \|f\|_{ \widetilde{G}(u,\infty)} \Big\|_{\widehat{E}}&\lesssim
\Big\|\Big(\b(\lambda_k)\|f\|_{ \widetilde{G}(\lambda_{k},\lambda_{k+1})}\Big)_{k\in\Z} \Big\|_{d\widehat{E}}\\
&=\Big\|\sum_{k\in\Z}\b(\lambda_k)\|f\|_{ \widetilde{G}(\lambda_{k},\lambda_{k+1})}\chi_{I_{k}}(u) \Big\|_{\widehat{E}}\\
&\sim\Big\|\sum_{k\in\Z}\frac{\b(u)}{\|a\|_{\widetilde{F}(u,\infty)}}\|a\|_{\widetilde{F}(u,\infty)}\|f\|_{ \widetilde{G}(\lambda_{k},\lambda_{k+1})}\chi_{I_{k}}(u) \Big\|_{\widehat{E}}.
\end{align*}
By Lemma \ref{lema2.3} ii) the function  $u\leadsto \|a\|_{\widetilde{F}(u,\infty)}$ is  slowly varying with $\|a\|_{\widetilde{F}(u,\infty)}\sim \|a\|_{\widetilde{F}(u^2,\infty)}$. Then, for any $u\in I_{k}$ we have that $\|a\|_{\widetilde{F}(u,\infty)}\sim\|a\|_{\widetilde{F}(\lambda_{k+1},\infty)}$, $k\in\Z$, and
\begin{align*}
\|a\|_{\widetilde{F}(\lambda_{k+1},\infty)}\|f\|_{ \widetilde{G}(\lambda_{k},\lambda_{k+1})}&=
\big\|a(t)\|f\|_{ \widetilde{G}(\lambda_{k},\lambda_{k+1})}\big\|_{\widetilde{F}(\lambda_{k+1},\infty)}\lesssim\Big\|a(t)\|f\|_{ \widetilde{G}(\lambda_{k},t)}\Big\|_{\widetilde{F}(\lambda_{k+1},\infty)}\\&\leq\Big\|a(t)\|f\|_{ \widetilde{G}(\lambda_{k},t)}\Big\|_{\widetilde{F}(\lambda_{k},\infty)}.
\end{align*}
Therefore,
\begin{align*}
\Big \|\b(u)  \|f\|_{ \widetilde{G}(u,\infty)} \Big\|_{\widehat{E}}&\lesssim
\bigg\|\sum_{k\in\Z}\frac{\b(u)}{\|a\|_{\widetilde{F}(u,\infty)}}\Big\|a(t)\|f\|_{ \widetilde{G}(\lambda_{k},t)}\Big\|_{\widetilde{F}(\lambda_{k},\infty)}\chi_{I_{k}}(u) \bigg\|_{\widehat{E}}\\
&\lesssim
\bigg\|\sum_{k\in\Z}\frac{\b(u)}{\|a\|_{\widetilde{F}(u,\infty)}}\Big\|a(t)\|f\|_{ \widetilde{G}(u,t)}\Big\|_{\widetilde{F}(u,\infty)}\chi_{I_{k}}(u) \bigg\|_{\widehat{E}}\\
&=
\bigg\|\frac{\b(u)}{\|a\|_{\widetilde{F}(u,\infty)}}\Big\|a(t)\|f\|_{ \widetilde{G}(u,t)}\Big\|_{\widetilde{F}(u,\infty)} \bigg\|_{\widehat{E}}.
\end{align*}
The proof of the lemma is complete.
\end{proof}

\begin{lem}\label{maxI1I2}
Let $E$, $F$, $G$ be r.i. spaces and let $d\widehat{E}$ be the discretization of $\widehat{E}$. Let $a$, $\b\in SV$ such that $a(t^2)\sim a(t)$, $\b(t^2)\sim\b(t)$ and let $A_0$, $A_\infty$, $\B_0$ and $\B_\infty$ be their associated functions defined in (\ref{fb}). The following statements holds:

\begin{itemize}
\item[i)] If $\rho_{A_{\infty}} < \pi_{\varphi_F} \leq \rho_{\varphi_F} < \pi_{A_{0}}$ and $\rho_{B_0}<0<\pi_{B_\infty}$, then
$$\Big \|\b(u) \big\| a(t) \|f\|_{ \widetilde{G}(t,u)}\big\|_{ \widetilde{F}(0,u)} \Big\|_{\widehat{E}}
\sim  \Big\|\Big(\b(\lambda_k)
\|a\|_{\widetilde{F}(0,\lambda_k)}\|f\|_{\widetilde{G}(\lambda_{k-1},\lambda_k)} \Big)_{k\in\Z}\Big\|_{d\widehat{E}}$$
for any measurable function $f$ on $(0,\infty)$.
\item[ii)] If $\rho_{A_0} < \pi_{\varphi_F} \leq \rho_{\varphi_F} < \pi_{A_\infty}$ and $\rho_{B_\infty}<0<\pi_{B_0}$, then
$$\Big \|\b(u) \big\| a(t) \|f\|_{ \widetilde{G}(u,t)}\big\|_{ \widetilde{F}(u,\infty)} \Big\|_{\widehat{E}}
\lesssim  \Big\|\Big(\b(\lambda_k)
\|a\|_{\widetilde{F}(\lambda_{k},\infty)}\|f\|_{\widetilde{G}(\lambda_{k-1},\lambda_k)} \Big)_{k\in\Z}\Big\|_{d\widehat{E}}$$
for any measurable function $f$ on $(0,\infty)$.
\end{itemize}
\end{lem}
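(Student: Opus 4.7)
The plan is to push through the same discretization-plus-Hardy strategy used in the proof of Lemma \ref{key1}, now adapted to the norm $\|a(t)\|f\|_{\widetilde{G}(t,u)}\|_{\widetilde{F}(0,u)}$ in place of $\|f\|_{\widetilde{G}(0,u)}$. Throughout I set $A_k:=\|a\|_{\widetilde{F}(0,\lambda_k)}$ and $x_k:=\|f\|_{\widetilde{G}(\lambda_{k-1},\lambda_k)}$. By Lemma \ref{lema2.3} i), $A(t):=\|a\|_{\widetilde{F}(0,t)}$ is SV with $A(t^2)\sim A(t)$, so $A_{k-1}\sim A_k$ and $\|a\|_{\widetilde{F}(0,u)}\sim A_j$ whenever $u\in I_{j+1}$; likewise $\b(u)\sim\b(\lambda_j)$ on $I_{j+1}$.

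The $\gtrsim$ direction in part i) is immediate: for $u\in I_{k+1}$ and $t\in(0,\lambda_{k-1})$ one has $(t,u)\supset[\lambda_{k-1},\lambda_k]$, hence $\|f\|_{\widetilde{G}(t,u)}\geq x_k$, so that
$$\b(u)\|a(t)\|f\|_{\widetilde{G}(t,u)}\|_{\widetilde{F}(0,u)}\gtrsim\b(\lambda_k)\,A_k\,x_k\quad\text{on }I_{k+1}.$$
Taking $\widehat{E}$-norms and using that $d\widehat{E}$ is shift-invariant (as an r.i.\ sequence space on $\Z$) delivers the lower bound.

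For the harder $\lesssim$ direction, fix $u\in I_{j+1}$, split $(0,u)=(0,\lambda_j)\cup[\lambda_j,u)$, and apply the triangle inequality in $\widetilde{G}$: on $t\in(0,\lambda_j)$ one has $\|f\|_{\widetilde{G}(t,u)}\leq\|f\|_{\widetilde{G}(t,\lambda_j)}+x_{j+1}$, while on $t\in[\lambda_j,u)$ one has $\|f\|_{\widetilde{G}(t,u)}\leq x_{j+1}$. Combined with $\|a\|_{\widetilde{F}(0,u)}\sim A_j$ this yields the recursion
$$\|a(t)\|f\|_{\widetilde{G}(t,u)}\|_{\widetilde{F}(0,u)}\lesssim A_j\,x_{j+1}+\|a(t)\|f\|_{\widetilde{G}(t,\lambda_j)}\|_{\widetilde{F}(0,\lambda_j)},$$
and iterating downward, with $A_{k-1}\sim A_k$ at each step, produces $\|a(t)\|f\|_{\widetilde{G}(t,u)}\|_{\widetilde{F}(0,u)}\lesssim\sum_{k\leq j+1}A_kx_k$. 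Multiplying by $\b(\lambda_j)$, taking the $\widehat{E}$-norm, and reindexing via the shift-invariance of $d\widehat{E}$ together with $\b(\lambda_{j-1})\sim\b(\lambda_j)$, the estimate reduces to $\|(\b(\lambda_j)\sum_{k\leq j}A_kx_k)_j\|_{d\widehat{E}}$. The hypothesis $\rho_{B_0}<0<\pi_{B_\infty}$ then allows me to invoke Lemma \ref{lema29} i) to replace $\b$ by an equivalent function with $\sup_k\b(\lambda_{k+1})/\b(\lambda_k)<1$, so Lemma \ref{lema8} i) collapses the partial sum to its last term and delivers the claimed RHS $\|(\b(\lambda_j)A_jx_j)_j\|_{d\widehat{E}}$.

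Part ii) is the mirror argument: for $u\in I_j$, split $(u,\infty)=(u,\lambda_j]\cup(\lambda_j,\infty)$, iterate upward to obtain $\|a(t)\|f\|_{\widetilde{G}(u,t)}\|_{\widetilde{F}(u,\infty)}\lesssim\sum_{k\geq j}\|a\|_{\widetilde{F}(\lambda_k,\infty)}\,x_k$ (invoking Lemma \ref{lema2.3} ii) for the SV behavior of $\|a\|_{\widetilde{F}(\cdot,\infty)}$), and apply Lemma \ref{lema29} ii) and Lemma \ref{lema8} ii) under the hypothesis $\rho_{B_\infty}<0<\pi_{B_0}$. The main technical nuisance in either part is ensuring that the downward (resp.\ upward) iteration terminates, that is, that the residual $\|a(t)\|f\|_{\widetilde{G}(t,\lambda_k)}\|_{\widetilde{F}(0,\lambda_k)}\to0$ as $k\to-\infty$ (resp.\ its analogue at $+\infty$); this follows because the residual is bounded by $A_k\|f\|_{\widetilde{G}(0,\lambda_k)}$ and $\|f\|_{\widetilde{G}(0,\lambda_k)}\to0$ whenever the LHS is finite.
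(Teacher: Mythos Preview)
Your iterative approach is a genuine alternative to the paper's argument and is correctly set up in its main mechanics: the one-step recursion
\[
R(\lambda_j)\le R(\lambda_{j-1})+(A_{j-1}+\|a\|_{\widetilde F(\lambda_{j-1},\lambda_j)})\,x_j\le R(\lambda_{j-1})+2A_j x_j
\]
carries coefficient exactly $1$ on $R(\lambda_{j-1})$, so the constants do not accumulate, and after the $\b$-Hardy step (Lemma~\ref{lema8} i) you land on the claimed right-hand side. The paper proceeds differently at this stage: instead of iterating, it writes the direct double sum
\[
R(\lambda_k)\le\sum_{m\le k}\|a\|_{\widetilde F(\lambda_{m-1},\lambda_m)}\sum_{j=m}^k x_j,
\]
replaces $\|a\|_{\widetilde F(I_m)}$ by $\tilde a(\lambda_m)=a(\lambda_m)\varphi_F(\ell(\lambda_m))$ via Lemma~\ref{lema2.3}, and then collapses the inner sum by an $\ell_1$-Hardy inequality (Lemma~\ref{lema8} ii) applied to the geometrically increasing weights $\tilde a(\lambda_m)$), using the index hypothesis on $a$ a second time. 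This buys a residual-free bound $R(\lambda_k)\lesssim\sum_{m\le k}\tilde a(\lambda_m)x_m$ without any limiting procedure.

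The one genuine gap in your write-up is the justification that the residual vanishes. The claim ``$\|f\|_{\widetilde G(0,\lambda_k)}\to 0$ whenever the LHS is finite'' is not true in general: take $G=L_\infty$ and $f$ bounded away from zero near the origin. What does hold under the index hypothesis is $A_k\to 0$ as $k\to-\infty$ (since $\tilde A_0$ has strictly positive lower index), so your bound $R(\lambda_k)\le A_k\|f\|_{\widetilde G(0,\lambda_k)}$ would suffice \emph{if} you also knew $\|f\|_{\widetilde G(0,\lambda_{k_0})}<\infty$ for some $k_0$ --- but this is not guaranteed for arbitrary measurable $f$. A clean fix is a truncation: apply the finite iteration to $f_M=f\chi_{(\lambda_{-M},\infty)}$, for which the residual is identically zero once $k\le -M$, obtain $R(\lambda_j;f_M)\le 2\sum_{k=-M+1}^{j}A_kx_k$, and let $M\to\infty$ using the Fatou property of $\widetilde G$ and $\widetilde F$. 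With that correction your argument goes through and gives a slightly more elementary proof than the paper's, since it avoids the second invocation of Lemma~\ref{lema8} on the $\tilde a$-weights.
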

\begin{proof}
We start with (i). Arguing as in \eqref{e34} it follows that
\begin{align*}
I:=\Big \|\b(u) \big\| a(t) \|f\|_{ \widetilde{G}(t,u)}\big\|_{ \widetilde{F}(0,u)} \Big\|_{\widehat{E}}&
\sim \Big \|\sum_{k\in\Z}\b(\lambda_k) \big\| a(t) \|f\|_{ \widetilde{G}(t,u)}\big\|_{ \widetilde{F}(0,u)} \chi_{I_k}(u) \Big\|_{\widehat{E}}\\
&\leq \Big \|\sum_{k\in\Z}\b(\lambda_k) \big\| a(t) \|f\|_{ \widetilde{G}(t,\lambda_k)}\big\|_{ \widetilde{F}(0,\lambda_k)} \chi_{I_k}(u) \Big\|_{\widehat{E}}\\
&=\Big\|\Big(\b(\lambda_k) \big\| a(t) \|f\|_{ \widetilde{G}(t,\lambda_k)}\big\|_{ \widetilde{F}(0,\lambda_k)}\Big)_{k\in\Z}\Big\|_{d\widehat{E}}.
\end{align*}
Moreover, for the factor $\| a(t) \|f\|_{ \widetilde{G}(t,\lambda_k)}\|_{ \widetilde{F}(0,\lambda_k)}$ the following estimate holds
\begin{align}
\big\| a(t) \|f\|_{ \widetilde{G}(t,\lambda_k)}\big\|_{ \widetilde{F}(0,\lambda_k)}&\leq \sum_{m\leq k}
\big\| a(t) \|f\|_{ \widetilde{G}(t,\lambda_k)}\big\|_{ \widetilde{F}(\lambda_{m-1},\lambda_m)}\label{e89}\\
&\leq \sum_{m\leq k}
\| a\|_{ \widetilde{F}(\lambda_{m-1},\lambda_m)} \|f\|_{\widetilde{G}(\lambda_{m-1},\lambda_k)}\nonumber\\
&\leq \sum_{m\leq k}
\| a\|_{ \widetilde{F}(\lambda_{m-1},\lambda_m)} \sum_{j=m}^k\|f\|_{\widetilde{G}(\lambda_{j-1},\lambda_j)}.\nonumber
\end{align}
Hence,
$$I\lesssim\Big\|\Big(\b(\lambda_k) \sum_{m\leq k}
\| a\|_{ \widetilde{F}(\lambda_{m-1},\lambda_m)} \sum_{j=m}^k\|f\|_{\widetilde{G}(\lambda_{j-1},\lambda_j)}\Big)_{k\in\Z}\Big\|_{d\widehat{E}}.$$
Moreover, due to Lemma \ref{lema2.3} i) ($\rho_{A_{\infty}} < \pi_{\varphi_F} \leq \rho_{\varphi_F} < \pi_{A_{0}}$) we have that
\begin{equation}\label{e25}
\| a\|_{ \widetilde{F}(\lambda_{m-1},\lambda_m)}\lesssim \| a\|_{ \widetilde{F}(0,\lambda_m)}\sim a(\lambda_m)\varphi_{F}(\ell(\lambda_m)).
\end{equation}
Let us denote by $\tilde{a}$ the slowly varying function $u\leadsto  a(u)\varphi_{F}(\ell(u))$, $u>0$. This function satisfies that $\tilde{a}(u^2)  \sim\tilde{a}(u)$ and its associated functions are $\tilde{A}_0(u)=A_0(u)\varphi_{F}(1/u)$, $\tilde{A}_\infty(u)=A_\infty(u)\varphi_{F}(1/u)$, $u\in(0,1]$. Using the properties of the extension indices it is clear that
$$\rho_{\tilde{A}_{\infty}}\leq \rho_{A_{\infty}}-\pi_{\varphi_F}<0 < \pi_{A_0}-\rho_{\varphi_F}\leq \pi_{\tilde{A}_{0}}.$$
Hence, by Lemma \ref{lema29} ii), there exists an equivalent function to $\tilde{a}$, that we denote in the same way, such that $\inf_{k\in\Z}\frac{\tilde{a}(\lambda_{k+1})}{\tilde{a}(\lambda_k)}>1$. Applying Lemma \ref{lema8} ii), with $\textbf{e}=\ell_1$, we derive 
$$\sum_{m\leq k}
 \tilde{a}(\lambda_m)\sum_{j=m}^k\|f\|_{\widetilde{G}(\lambda_{j-1},\lambda_j)}\lesssim
\sum_{m\leq k}
 \tilde{a}(\lambda_m)\|f\|_{\widetilde{G}(\lambda_{m-1},\lambda_m)}.$$
Therefore, by \eqref{e25}, we obtain
$$I\lesssim \Big\|\Big(\b(\lambda_k)\sum_{m\leq k}
 \| a\|_{ \widetilde{F}(0,\lambda_m)}\|f\|_{\widetilde{G}(\lambda_{m-1},\lambda_m)} \Big)_{k\in\Z}\Big\|_{d\widehat{E}}.$$
Since $\rho_{B_0}<0<\pi_{B_\infty}$, we can apply again Lemmas \ref{lema29} and \ref{lema8} i) with $\textbf{e}=d\widehat{E}$ to deduce that
\begin{align*}
I&\lesssim \Big\|\Big(\b(\lambda_k)\| a\|_{ \widetilde{F}(0,\lambda_k)}
 \|f\|_{\widetilde{G}(\lambda_{k-1},\lambda_k)} \Big)_{k\in\Z}\Big\|_{d\widehat{E}}.
\end{align*}

In order to prove the reverse inequality ``$\gtrsim$'' we proceed as follows.  An argument similar to that of \eqref{e34} yields
\begin{align*}
I&\sim\Big \|\sum_{k\in\Z}\b(\lambda_k) \big\| \a(t) \|f\|_{ \widetilde{G}(t,u)}\big\|_{ \widetilde{F}(0,u)}\chi_{I_{k+1}}(u) \Big\|_{\widehat{E}} \\
&\gtrsim\Big \|\sum_{k\in\Z}\b(\lambda_k) \big\| \a(t) \|f\|_{ \widetilde{G}(t,\lambda_k)}\big\|_{ \widetilde{F}(0,\lambda_k)}\chi_{I_{k+1}}(u) \Big\|_{\widehat{E}}\\
&=\Big \|\Big(\b(\lambda_k) \big\| \a(t) \|f\|_{ \widetilde{G}(t,\lambda_k)}\big\|_{ \widetilde{F}(0,\lambda_k)}\Big) \Big\|_{d\widehat{E}}.
\end{align*}
Moreover
\begin{align*}
\big\| \a(t) \|f\|_{ \widetilde{G}(t,\lambda_k)}\big\|_{ \widetilde{F}(0,\lambda_k)}
&\geq \big\| \a(t) \|f\|_{ \widetilde{G}(t,\lambda_k)}\big\|_{ \widetilde{F}(0,\lambda_{k-1})}\geq\big\| \a(t) \|f\|_{ \widetilde{G}(\lambda_{k-1},\lambda_k)}\big\|_{ \widetilde{F}(0,\lambda_{k-1})}\\
&\sim\| \a\|_{ \widetilde{F}(0,\lambda_k)}\|f\|_{ \widetilde{G}(\lambda_{k-1},\lambda_k)}.
\end{align*}
Hence
$$I\gtrsim \Big \|\Big(\b(\lambda_k) \| \a\|_{ \widetilde{F}(0,\lambda_k)}\|f\|_{ \widetilde{G}(\lambda_{k-1},\lambda_k)}\Big) \Big\|_{d\widehat{E}}$$
and the proof of i) is complete.

Now we proceed with the proof of ii). Similar arguments to the ones given in \eqref{e34} allow to obtain that
\begin{align*}
II:&=\Big \|\b(u) \big\| a(t) \|f\|_{ \widetilde{G}(u,t)}\big\|_{ \widetilde{F}(u,\infty)} \Big\|_{\widehat{E}}\\&\lesssim\Big\|\Big(\b(\lambda_k) \big\| a(t) \|f\|_{ \widetilde{G}(\lambda_{k-1},t)}\big\|_{ \widetilde{F}(\lambda_{k-1},\infty)}\Big)_{k\in\Z}\Big\|_{d\widehat{E}}.
\end{align*}
%\begin{align*}
%\Big \|\b(u) \big\| a(t) \|f\|_{ \widetilde{G}(u,t)}\big\|_{ \widetilde{F}(u,\infty)} \Big\|_{\widehat{E}}&\sim \Big \|\sum_{k\in\Z}\b(\lambda_k) \big\| a(t) \|f\|_{ \widetilde{G}(u,t)}\big\|_{ \widetilde{F}(u,\infty)} \chi_{I_k}(u) \Big\|_{\widehat{E}}\\
%&\leq \Big \|\sum_{k\in\Z}\b(\lambda_k) \big\| a(t) \|f\|_{ \widetilde{G}(\lambda_{k-1},t)}\big\|_{ \widetilde{F}(\lambda_{k-1},\infty)} \chi_{I_k}(u) \Big\|_{\widehat{E}}\\
%&\sim\Big\|\Big(\b(\lambda_k) \big\| a(t) \|f\|_{ \widetilde{G}(\lambda_{k-1},t)}\big\|_{ \widetilde{F}(\lambda_{k-1},\infty)}\Big)_{k\in\Z}\Big\|_{\textbf{e}}.
%\end{align*}
And arguing as in \eqref{e89} 
\begin{align*}
\big\| a(t) \|f\|_{ \widetilde{G}(\lambda_{k-1},t)}\big\|_{ \widetilde{F}(\lambda_{k-1},\infty)}&\leq \sum_{m\geq k}
\big\| a(t) \|f\|_{ \widetilde{G}(\lambda_{k-1},\lambda_m)}\big\|_{ \widetilde{F}(\lambda_{m-1},\lambda_m)}\\
&\leq \sum_{m\geq k}
\| a\|_{ \widetilde{F}(\lambda_{m-1},\lambda_m)} \|f\|_{\widetilde{G}(\lambda_{k-1},\lambda_m)}\\
&\leq \sum_{m\geq k}
\| a\|_{ \widetilde{F}(\lambda_{m-1},\lambda_m)} \sum_{j=k}^m\|f\|_{\widetilde{G}(\lambda_{j-1},\lambda_j)}.
\end{align*}
Lemma \ref{lema2.3} ($\rho_{A_{0}} < \pi_{\varphi_F} \leq \rho_{\varphi_F} < \pi_{A_{\infty}}$)
establishes that
$$\| a\|_{ \widetilde{F}(\lambda_{m-1},\lambda_m)}\leq\| a\|_{ \widetilde{F}(\lambda_{m-1},\infty)}\sim a(\lambda_{m})\varphi_{F}(\ell(\lambda_m)).$$ We denote again by $\tilde{a}$ the slowly varying function $u\leadsto a(u)\varphi_{F}(\ell(u))$, $u>0$. Its associated functions are $\tilde{A}_0(u)=A_0(u)\varphi_{F}(1/u)$, $\tilde{A}_\infty(u)=A_\infty(u)\varphi_{F}(1/u)$ and this time
$$\rho_{\tilde{A}_{0}}\leq \rho_{A_0}-\pi_{\varphi_F}<0 < \pi_{A_\infty}-\rho_{\varphi_F}\leq \pi_{\tilde{A}_{\infty}}.$$
Then, there exist an equivalent function, that we denote in the same way, such that $\sup_{k\in\Z}\frac{\tilde{a}(\lambda_{k+1})}{\tilde{a}(\lambda_k)}<1$. Using Lemma \ref{lema8}  i) with $\textbf{e}=\ell_1$, we have 
$$\sum_{m\geq k}
 \tilde{a}(\lambda_m)\sum_{j=k}^m\|f\|_{\widetilde{G}(\lambda_{j-1},\lambda_j)}\lesssim
\sum_{m\geq k}
 \tilde{a}(\lambda_m)\|f\|_{\widetilde{G}(\lambda_{m-1},\lambda_m)}.$$
All together, 
$$II\lesssim \Big\|\Big(\b(\lambda_k)\sum_{m\geq k}
\tilde{a}(\lambda_m) \|f\|_{\widetilde{G}(\lambda_{m-1},\lambda_m)} \Big)_{k\in\Z}\Big\|_{d\widehat{E}}.$$
Finally, by Lemmas \ref{lema29} y \ref{lema8} with $\textbf{e}=d\widehat{E}$ ($\rho_{B_\infty}<0<\pi_{B_0}$) we obtain the desired inequality
$$II\lesssim \Big\|\Big(\b(\lambda_k)
\|a\|_{\widetilde{F}(\lambda_k,\infty)} \|f\|_{\widetilde{G}(\lambda_{k-1},\lambda_k)} \Big)_{k\in\Z}\Big\|_{d\widehat{E}}.$$
\end{proof}

\section{Interpolation Methods} \label{interpolation methods}

In the sequel, $\overline{X}=(X_{0}, X_{1})$ will be a \textit{compatible (quasi-) Banach couple}.
Next, we collect, following \cite{FMS-1}, the necessary definitions and statements dealing with the real interpolation methods $\overline{X}_{\theta,\b,E}$, $\overline{X}_{\theta,\b,E,a,F}^{\mathcal R}$, $\overline{X}_{\theta,\b,E,a,F}^{\mathcal L}$, involving slowly varying functions and r.i. spaces.

\begin{defn}\label{defrealmethod}
Let $E$ be an r.i.  space,  $\b\in SV$ and $0\leq\theta \leq 1$. The real interpolation space $\overline{X}_{\theta,\b,E}\equiv(X_0,X_1)_{\theta, \b, E}$ consists of all $f$ in $X_{0} + X_{1}$ for which
$$ \|f\|_{\theta,\b,E} := \big \| t^{-\theta} {\b}(t) K(t,f) \big \|_{\widetilde{E}} < \infty.$$
\end{defn}

It is a well-known fact that  $\overline{X}_{\theta, \b,E}$ is a (quasi-) Banach space, and it is intermediate for the couple $\overline{X}$, that is,
$$X_0\cap X_1\hookrightarrow \overline{X}_{\theta, \b,E}\hookrightarrow X_0+X_1,$$
provided that 
$0< \theta< 1$,
 $\theta=0$ and $\|\b\|_{\widetilde{E}(1,\infty)}<\infty$ or
 $\theta=1$ and $\|\b\|_{\widetilde{E}(0,1)}<\infty$.
If none of these conditions holds, the space is trivial, that is $\overline{X}_{\theta, \b,E}=\{0\}$.

When $E=L_q$ and $\b \equiv 1$, then $\overline{X}_{\theta,\b,E}$ coincides with the classical real interpolation space $\overline{X}_{\theta,q}$. 

The reiteration spaces
$$(\overline{X}_{\theta_0,\b_0,E_0},\overline{X}_{\theta_1,\b_1,E_1})_{\theta,\b,E},$$ with $\theta_0<\theta_1,$ have been studied in detail in \cite{FMS-1,FMS-2,FMS-3} for general r.i. spaces $E$, and in \cite{AEEK,GOT} for $E=L_q$, $0<q\leq \infty$. For other special cases see e.g. \cite{CSe-2,Do,EO,EOP,tesisalba}. When $\theta=0,1$ the resulting reiteration spaces do not belong to the same scale and the ${\mathcal R}$ and ${\mathcal L}$ constructions are needed to describe them.

\begin{defn}
Let $E$, $F$ be two r.i. spaces, $a, \b\in SV$ and $0\leq \theta\leq 1$. 
The (quasi-) Banach space 
$\overline{X}_{\theta,\b,E,a,F}^{\mathcal R}\equiv(X_0,X_1)_{\theta,\b,E,a,F}^{\mathcal R}$ consists of all $f\in X_0+X_1$ for which
$$\| f  \|_{\mathcal{R};\theta,\b,E,a,F} := \Big \|  \b(t) \|   s^{-\theta} a(s) K(s,f) \|_{\widetilde{F}(t,\infty)}      \Big   \|_{\widetilde{E}}<\infty.$$
\end{defn}

The space $\mathcal{R}$ is intermediate for the couple $\overline{X}$, that is,
$$ X_0\cap X_1 \hookrightarrow  \overline{X}_{\theta,\b,E,a,F}^{\mathcal R} \hookrightarrow X_0+X_1$$
provided that any of the following conditions holds:
\begin{enumerate}
\item[1.] $0 < \theta < 1$ and  $\|\b\|_{\widetilde{E}(0,1)} <\infty$, 
\item[2.] $\theta =0$,  $\|\b\|_{\widetilde{E}(0,1)} \!<\infty$ and   $\big \| \b(t)\|a\|_{\widetilde{F}(t,\infty)}  \big \|_{\widetilde{E}(1,\infty)} \!< \infty$ or
\item[3.] $\theta =1$, $ \|\b\|_{\widetilde{E}(0,1)} \!<\infty$,    $\big \| \b(t) \|a\|_{\widetilde{F}(t,1)} \big \|_{\widetilde{E}(0,1)} \!< \infty$ and  $\|\a\b\|_{\widetilde{E}(0,1)}<\infty$.
\end{enumerate}
Otherwise, $\overline{X}^{\mathcal R}_{\theta, \b,E,a,F}=\{0\}$.

\begin{defn}\label{defLR}
 Let $E$, $F$ be two r.i. spaces, $a, \b\in SV$ and $0\leq \theta\leq 1$. The space $\overline{X}_{\theta,\b,E,a,F}^{\mathcal L}\equiv(X_0,X_1)_{ \eta,\b,E,a,F}^{\mathcal L}$ consists of all $f \in X_{0} + X_{1}$ for which 
$$ \|f \|_{\mathcal L;\theta,\b,E,a,F}:=
\Big\|\b(t) \|s^{-\theta} a(s) K(s,f) \|_{\widetilde{F}(0,t)}\Big\|_{\widetilde{E}} < \infty. $$
\end{defn}
This is  a (quasi-) Banach space. Moreover, it is intermediate for the couple $\overline{X}$, $$ X_0\cap X_1 \hookrightarrow  \overline{X}_{\theta,\b,E,a,F}^{\mathcal L} \hookrightarrow X_0+X_1,$$ provided that:
\begin{enumerate}
\item[1.] $0 < \theta < 1$  and  $\|\b\|_{\widetilde{E}(1,\infty)} <\infty$,  
\item[2.] $\theta =0$, $ \big \| \b(t)\|a\|_{\widetilde{F}(1,t)}  \big \|_{\widetilde{E}(1,\infty)} \!< \infty$  or 
\item[3.] $\theta =1$, $\|\b\|_{\widetilde{E}(1,\infty)} \!<\infty$ and $\big \| \b(t) \|a\|_{\widetilde{F}(0,t)} \big \|_{\widetilde{E}(0,1)} \!< \infty$.
\end{enumerate}
If none of these conditions holds, then $\overline{X}^{\mathcal L}_{\theta, \b,E,a,F}$ is  the trivial space.

The spaces $\overline{X}_{\theta,\b,\widehat{E},a,F}^{\mathcal R}$ and $\overline{X}_{\theta,\b,\widehat{E},a,F}^{\mathcal L}$ can be defined analogously replacing $\widetilde{E}$ by $\widehat{E}$ in previous definitions.

We refer to the recent papers \cite{Do2020,Do2021,FMS-RL1,FMS-RL2,FMS-RL3} for reiteration theorems for couples formed by arbitrary combinations of the previous spaces under the condition that the parameters $\theta_0$ and $\theta_1$ are not equal.
Again, in the extremal cases $\theta=0, 1$ the resulting reiteration spaces belong to some extremal constructions introduced in \cite{Do2021,FMS-RL1,FMS-RL2}. Let us recall the definition of the interpolation methods $(\mathcal R,\mathcal L)$ and $(\mathcal L,\mathcal R)$.
\begin{defn}\label{defLRR^}
Let $E$, $F$, $G$ be  r.i. spaces, $\a, \b, c \in SV$ and $0< \theta<1$. 
The space $\overline{X}_{\theta,c,\widehat{E},\b,F,\a,G}^{\mathcal R,\mathcal L}\equiv(X_0,X_1)_{ \eta,c,\widehat{E},\b,F,\a,G}^{\mathcal R,\mathcal L}$ is the set of all $f\in X_0+X_1$ for which
\begin{equation}\label{dRL}
 \|f \|_{\mathcal R,\mathcal L;\theta,c,\widehat{E},\b,F,a,G} :=
\bigg\|c(u)\Big\|\b(t) \|s^{-\theta} a(s) K(s,f) \|_{\widetilde{G}(t,u)}\Big\|_{\widetilde{F}(0,u)}\bigg\|_{\widehat{E}}<\infty.
\end{equation}

The space $\overline{X}_{\theta,c,\widehat{E},\b,F,a,G}^{\mathcal L,\mathcal R}\equiv(X_0,X_1)_{ \eta,c,\widehat{E},\b,F,a,G}^{\mathcal L,\mathcal R}$ is the set of all $f\in X_0+X_1$ such that
\begin{equation}\label{dLR}
 \|f \|_{\mathcal L,\mathcal R;\theta,c,\widehat{E},\b,F,\a,G} :=
\bigg\|c(u)\Big\|\b(t) \|s^{-\theta} a(s) K(s,f) \|_{\widetilde{G}(u,t)}\Big\|_{\widetilde{F}(u,\infty)}\bigg\|_{\widehat{E}}<\infty.
\end{equation}
\end{defn}

Lemma \ref{key1} allows  us to obtain the following inclusions:

\begin{cor}\label{cor45}
Let $E$, $F$, $G$ be r.i. spaces, let  $a$, $\b$, $c\in SV$ such that $\b(t^2)\sim \b(t)$, $c(t^2)\sim c(t)$ and let $B_0$, $B_\infty$, $C_0$ and $C_\infty$ be their respective associated functions defined by (\ref{fb}). The following statements holds:
\begin{itemize}
\item[i)] If $\rho_{B_{\infty}} < \pi_{\varphi_F} \leq \rho_{\varphi_F} < \pi_{B_{0}}$ and $\rho_{C_0}<0<\pi_{C_\infty}$, then
$$\overline{X}_{\theta,d,\widehat{E},\b,F,\a,G}^{\mathcal R,\mathcal L}=\overline{X}_{\theta,c,\widehat{E},a,G}^{\mathcal L}$$
where $d(t)=c(t)/\|\b\|_{\widetilde{F}(0,u)}$, $u>0$.
\item[ii)] If $\rho_{B_0} < \pi_{\varphi_F} \leq \rho_{\varphi_F} < \pi_{B_\infty}$ and $\rho_{C_\infty}<0<\pi_{C_0}$, then
$$\overline{X}_{\theta,d,\widehat{E},\b,F,a,G}^{\mathcal L,\mathcal R}=\overline{X}_{\theta,c,\widehat{E},a,G}^{\mathcal R}$$
where $d(t)=c(t)/\|\b\|_{\widetilde{F}(u,\infty)}$, $u>0$.
\end{itemize}
\end{cor}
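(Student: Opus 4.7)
The plan is to observe that the corollary is essentially a direct interpretation of Lemma \ref{key1} in terms of the interpolation norms defined in Section 4. The key is to identify the $K$-integrand with the non-negative function appearing in Lemma \ref{key1} and to relabel the slowly varying functions so that the roles line up.

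First I would unfold both norms in part (i). By Definition \ref{defLRR^}, setting $d(u)=c(u)/\|\b\|_{\widetilde{F}(0,u)}$,
$$\|f\|_{\mathcal R,\mathcal L;\theta,d,\widehat{E},\b,F,a,G}=\bigg\|\frac{c(u)}{\|\b\|_{\widetilde{F}(0,u)}}\Big\|\b(t)\|s^{-\theta}a(s)K(s,f)\|_{\widetilde{G}(t,u)}\Big\|_{\widetilde{F}(0,u)}\bigg\|_{\widehat{E}},$$
while, using Definition \ref{defLR} with outer space $\widehat{E}$,
$$\|f\|_{\mathcal L;\theta,c,\widehat{E},a,G}=\Big\|c(u)\|s^{-\theta}a(s)K(s,f)\|_{\widetilde{G}(0,u)}\Big\|_{\widehat{E}}.$$
I would then apply Lemma \ref{key1}(i) to the non-negative function $h(s)=s^{-\theta}a(s)K(s,f)$, after relabelling the lemma's $a$ as the corollary's $\b$ and the lemma's $\b$ as the corollary's $c$. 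Under this relabelling, the lemma's hypothesis on the associated functions of its $a$ becomes exactly $\rho_{B_\infty}<\pi_{\varphi_F}\le\rho_{\varphi_F}<\pi_{B_0}$, and the hypothesis on the associated functions of its $\b$ becomes exactly $\rho_{C_0}<0<\pi_{C_\infty}$, i.e.\ the corollary's assumptions. The resulting equivalence of norms is precisely the identity of spaces claimed in (i).

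Part (ii) is entirely symmetric: one applies Lemma \ref{key1}(ii) with the same relabelling, now producing integrals over $(u,t)$ and $(u,\infty)$; the index conditions $\rho_{B_0}<\pi_{\varphi_F}\le\rho_{\varphi_F}<\pi_{B_\infty}$ and $\rho_{C_\infty}<0<\pi_{C_0}$ again match the hypotheses of the lemma verbatim. I do not expect any genuine obstacle in this corollary: all the heavy lifting, namely the discretization of $\widehat{E}$, the weighted Hardy-type inequality of Lemma \ref{lema8}, and the selection of strictly monotone representatives in Lemma \ref{lema29}, has already been carried out in the proof of Lemma \ref{key1}. The corollary is simply the repackaging of that equivalence at the level of the $\mathcal R$, $\mathcal L$, $(\mathcal R,\mathcal L)$ and $(\mathcal L,\mathcal R)$ interpolation spaces.
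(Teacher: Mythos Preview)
Your proposal is correct and follows exactly the approach the paper intends: the paper does not give a separate proof of Corollary~\ref{cor45} but simply introduces it with ``Lemma~\ref{key1} allows us to obtain the following inclusions,'' and your unfolding of the $(\mathcal R,\mathcal L)$, $(\mathcal L,\mathcal R)$, $\mathcal L$ and $\mathcal R$ norms together with the relabelling (lemma's $a\leftrightarrow$ corollary's $\b$, lemma's $\b\leftrightarrow$ corollary's $c$, lemma's $f\leftrightarrow s^{-\theta}a(s)K(s,f)$) is precisely how that reduction works.
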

%\textcolor{blue}{\begin{lem}\cite[Lemma 2.10]{FMS-3}\label{lemLRK}
%Let $E, F$ be  r.i. spaces, $a, \b\in SV$ and $0\leq \eta\leq 1$. Then, for all $f\in X_0+X_1$ and $u>0$
%\begin{equation}\label{e5}
%u^{-\theta}\a(u)\|\b\|_{\widetilde{E}(0,u)}K(u,f)\lesssim \Big\| \b(t) \|s^{- \eta} \a(s) K(s,f)\|_{ \widetilde{F}(t,u)}\Big\|_{ \widetilde{E}(0,u)}
%\end{equation}
%and 
%\begin{equation}\label{e7}
%u^{-\theta}\a(u)\|\b\|_{\widetilde{E}(u,\infty)}K(u,f)\lesssim \Big\| \b(t) \|s^{- \eta} \a(s) K(s,f)\|_{ \widetilde{F}(u,t)}\Big\|_{ \widetilde{E}(u,\infty)}.
%\end{equation}
%\end{lem}}

%%%%%%%%%%%%%%%%%%%%%%%%%%%%%%%%%%%%%%%%%%%%%%%%%%%%%%%%%%%%%%%%%%%%%%%%%%%%%%%%%%%%%%%%%%%%%%%%%%%%%%%%%%%%%%%%%%%%%%%%%%%%%%%%%%%%%%%%%%%

\section{Reiteration theorem}\label{S5}

In this section we shall prove the main result of this paper,  the characterization of the interpolation space
$$(\overline{X}^{\mathcal R}_{\theta, \b_0,E_0,\a,F}, \overline{X}^{\mathcal L}_{\theta,\b_1,E_1,\a,F})_{\eta,\b,E}$$
for all possible values of $\eta\in[0,1]$. To this end we will additionally need a generalized Holsmtedt type formula and a change of variables.

\begin{thm}\label{Holmstedt}
Let $0<\theta<1$. Let $E_0$, $E_1$, $F$ be  r.i. spaces; $\a$, $\b_0$, $\b_1\in SV$ with  $\|\b_0\|_{\widetilde{E}_0(0,1)}<\infty$ and $\|\b_1\|_{\widetilde{E}_1(1,\infty)}<\infty$. Then, for every $f\in \overline{X}^{\mathcal R}_{\theta, \b_0,E_0,\a,F}+ \overline{X}^{\mathcal L}_{\theta,\b_1,E_1,a,F}$ and $u>0$
 \begin{align*}
 K\big( \phi(u), f; \overline{X}^{\mathcal R}_{\theta, \b_0,E_0,\a,F}, \overline{X}^{\mathcal L}_{\theta,\b_1,E_1,\a,F}\big )& \sim\Big\| \b_0(t) \|s^{- \theta} \a(s) K(s,f)\|_{ \widetilde{F}(t,u)}\Big\|_{ \widetilde{E}_0(0,u)} \label{H2}\\
& +\phi(u)\Big\| \b_1(t)\|s^{- \theta} \a(s) K(s,f)\|_{\widetilde{F}(u,t)}\Big\|_{ \widetilde{E}_1(u,\infty)},\nonumber
\end{align*}
where
%\begin{equation}\label{rho1}
$$\phi(u) =\frac{\|\b_0\|_{\widetilde{E}_0(0,u)}}{\|\b_1\|_{\widetilde{E}_1(u,\infty)}},\quad  u>0.$$
%\end{equation}
\end{thm}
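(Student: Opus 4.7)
The plan is to prove the two inequalities ``$\lesssim$'' and ``$\gtrsim$'' separately using only elementary estimates on the $K$-functional and Lemma \ref{lem1} for handling slowly varying factors. No discretization or limiting Hardy-type tools are needed; this is why the only hypotheses are $\|\b_0\|_{\widetilde{E}_0(0,1)}<\infty$ and $\|\b_1\|_{\widetilde{E}_1(1,\infty)}<\infty$.

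I begin with the easier inequality ``$\gtrsim$''. For any admissible decomposition $f = g + h$ with $g \in \overline{X}^{\mathcal R}_{\theta,\b_0,E_0,\a,F}$ and $h \in \overline{X}^{\mathcal L}_{\theta,\b_1,E_1,\a,F}$, subadditivity of the $K$-functional gives $K(s,f) \leq K(s,g) + K(s,h)$, and I split each RHS summand accordingly. In the first summand, the $g$-contribution is bounded by $\|g\|_{\mathcal R}$ because $(t,u) \subset (t,\infty)$ and $(0,u) \subset (0,\infty)$. For the $h$-contribution, the key observation is that $t \mapsto \|s^{-\theta}\a(s)K(s,h)\|_{\widetilde{F}(0,t)}$ is non-decreasing, so
$$\|h\|_{\mathcal L} \geq \|s^{-\theta}\a(s)K(s,h)\|_{\widetilde{F}(0,u)}\,\|\b_1\|_{\widetilde{E}_1(u,\infty)};$$
inserting this bound and using the definition of $\phi(u)$ yields $\phi(u)\|h\|_{\mathcal L}$. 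A completely symmetric argument, using that $t \mapsto \|s^{-\theta}\a(s)K(s,g)\|_{\widetilde{F}(t,\infty)}$ is non-increasing, handles the second RHS summand. Taking the infimum over decompositions gives $\text{RHS} \lesssim K(\phi(u),f;\overline{X}^{\mathcal R},\overline{X}^{\mathcal L})$.

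For the upper bound ``$\lesssim$'' I construct an explicit decomposition. Pick $f = g_u + h_u$ near-optimal for $K(u,f;X_0,X_1)$, i.e. $\|g_u\|_{X_0} + u\|h_u\|_{X_1} \leq 2K(u,f)$. From this and the identity $K(s,f) \leq K(s,g_u) + K(s,h_u)$ together with the quasi-concavity of $K(\cdot,f)$ (specifically that $K(s,f)/s$ is non-increasing) I derive the pointwise estimates
$$K(s,g_u) \lesssim \begin{cases} K(s,f), & s\leq u, \\ K(u,f), & s\geq u, \end{cases}\qquad K(s,h_u) \lesssim \begin{cases} (s/u)K(u,f), & s\leq u, \\ K(s,f), & s\geq u. \end{cases}$$
Inserting these into the definitions of $\|g_u\|_{\mathcal R}$ and $\|h_u\|_{\mathcal L}$, I split both the outer integration over $t$ and the inner integration over $s$ at the cutoff $u$. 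Using Lemma \ref{lem1}(i) to evaluate $\|s^{-\theta}\a(s)\|_{\widetilde{F}(u,\infty)} \sim u^{-\theta}\a(u)$ (and the analogous estimates on $(0,u)$), each term reduces either to one of the two pieces of the RHS, or to a remainder of the form $K(u,f)u^{-\theta}\a(u)\|\b_0\|_{\widetilde{E}_0(0,u)}$.

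The only real bookkeeping is showing that these remainder terms are already controlled by the first summand of the RHS. For this I use that $K(s,f) \geq K(u,f)/2$ on $s \in (u/2,u)$, hence
$$\bigl\|\b_0(t)\|s^{-\theta}\a(s)K(s,f)\|_{\widetilde{F}(t,u)}\bigr\|_{\widetilde{E}_0(0,u)} \gtrsim u^{-\theta}\a(u)\,K(u,f)\,\|\b_0\|_{\widetilde{E}_0(0,u/2)} \sim u^{-\theta}\a(u)\,K(u,f)\,\|\b_0\|_{\widetilde{E}_0(0,u)},$$
since $\|\b_0\|_{\widetilde{E}_0(0,\cdot)}$ is slowly varying by Lemma \ref{lem1}(iii). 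The remaining tail pieces on $(u,\infty)$ are handled by Lemma \ref{lem1}(i), (iv), namely $\|t^{-\theta}\a(t)\b_0(t)\|_{\widetilde{E}_0(u,\infty)} \sim u^{-\theta}\a(u)\b_0(u) \lesssim u^{-\theta}\a(u)\|\b_0\|_{\widetilde{E}_0(0,u)}$, and symmetrically on the $\mathcal L$-side where the factor $\phi(u)$ converts $\|\b_1\|_{\widetilde{E}_1(u,\infty)}$ into $\|\b_0\|_{\widetilde{E}_0(0,u)}$. The main obstacle in the proof is precisely this bookkeeping: verifying that every boundary remainder produced by the splits at level $u$ is absorbed by one of the two targeted RHS terms, which is exactly where the definition $\phi(u) = \|\b_0\|_{\widetilde{E}_0(0,u)}/\|\b_1\|_{\widetilde{E}_1(u,\infty)}$ is essential.
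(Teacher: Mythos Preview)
Your proof is correct and follows essentially the same route as the paper. For the lower estimate (your ``$\gtrsim$'') your argument coincides line-by-line with what the paper writes out: split $(P_0f)(u)$ and $(Q_1f)(u)$ via an arbitrary decomposition $f=g+h$, bound the ``diagonal'' pieces trivially by $\|g\|_{\mathcal R}$ and $\|h\|_{\mathcal L}$, and handle the cross terms exactly as you describe, using monotonicity of the inner norms together with the identity $\phi(u)\|\b_1\|_{\widetilde{E}_1(u,\infty)}=\|\b_0\|_{\widetilde{E}_0(0,u)}$. For the upper estimate (your ``$\lesssim$'') the paper simply cites \cite[Theorem~3.3]{FMS-RL3}; your explicit construction via a near-optimal split $f=g_u+h_u$ for $K(u,f;X_0,X_1)$, the standard pointwise bounds on $K(s,g_u)$, $K(s,h_u)$, and the absorption of the remainder $u^{-\theta}a(u)K(u,f)\|\b_0\|_{\widetilde{E}_0(0,u)}$ into $(P_0f)(u)$ via Lemma~\ref{lem1}(i)--(iv) is precisely the classical Holmstedt mechanism that underlies that reference, so there is no substantive difference.
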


\begin{proof}

Given $f\in X_0+X_1$ and $u>0$, we consider the following (quasi-) norms
\begin{align*}
(P_0 f)(u) &= \Big\| \b_0(t)\, \|s^{- \theta} \a(s) K(s,f)\|_{ \widetilde{F}(t,u)}\Big\|_{ \widetilde{E}_0(0,u)},\\
(Q_1 f)(u) &= \Big\| \b_1(t)\| s^{- \theta} \a(s) K(s,f)\|_{ \widetilde{F}(u,t)}\Big\|_{\widetilde{E}_1(u,\infty)}.
\end{align*}
Denote $Y_0= \overline{X}^{\mathcal R}_{\theta, \b_0,E_0,\a,F}$ and $Y_1=\overline{X}^{\mathcal L}_{\theta,\b_1,E_1,a,F}$.
The proof of the estimate 
$$K(\phi(u), f;Y_0,Y_1) \lesssim (P_{0}f)(u) + \phi(u) (Q_{1}f)(u)$$ can be done exactly as we did in Theorem 3.3 from \cite{FMS-RL3}. Therefore, we only have to prove the converse estimate, that is, 
\begin{equation}\label{e10}
(P_0f)(u)+\phi(u)(Q_1f)(u)\lesssim K(\phi(u), f;Y_0,Y_1)
\end{equation}
for all $f\in Y_0+Y_1$ and $u>0$.

Fix $u>0$ and let $f=f_0+f_1$ be any decomposition of $f$ where $f_0\in Y_0$ and $f_1\in Y_1$. By the (quasi)-subadditivity of  the $K$-functional and the definition of the norms in $Y_0$ and $Y_1$, we derive
$$(P_0f)(u)\lesssim (P_0f_0)(u)+(P_0f_1)(u)\leq \|f_0\|_{Y_0}+(P_0f_1)(u),$$
$$(Q_1f)(u)\lesssim (Q_1f_0)(u)+(Q_1f_1)(u)\leq (Q_1f_0)(u)+\|f_1\|_{Y_1}.$$
Thus, we have to study the boundedness of $(P_0f_1)(u)+\phi(u)(Q_0f_1)(u)$  by $\|f_0\|_{Y_0}+\phi(u)\|f_1\|_{Y_1}$. Let us start with $(P_0f_1)(u)$. It is easy to observe that
\begin{align*}
(P_0f_1)(u)
&\leq \|\b_0\|_{\widetilde{E}_0(0,u)}\|s^{-\theta}\a(s)K(s,f_1)\|_{\widetilde{F}(0,u)}\\
&=\phi(u)\|\b_1\|_{\widetilde{E}_1(u,\infty)}\|s^{-\theta}\a(s)K(s,f_1)\|_{\widetilde{F}(0,u)}\\
&\leq\phi(u)\Big\|\b_1(t)\|s^{-\theta}\a(s)K(s,f_1)\|_{\widetilde{F}(0,t)}\Big\|_{\widetilde{E}_1(u,\infty)}\\
&\leq  \phi(u)\Big\|\b_1(t)\|s^{-\theta}\a(s)K(s,f_1)\|_{\widetilde{F}(0,t)}\Big\|_{\widetilde{E}_1}=\phi(u)\|f_1\|_{Y_1}.
\end{align*}
Similarly, 
\begin{align*}
\phi(u)(Q_1f_0)(u)
&\leq \|\b_0\|_{\widetilde{E}_0(0,u)}\|s^{-\theta}\a(s)K(s,f_0)\|_{\widetilde{F}(u,\infty)}\\
&\leq\Big\|\b_0(t)\|s^{-\theta}\a(s)K(s,f_0)\|_{\widetilde{F}(t,\infty)}\Big\|_{\widetilde{E}_0(0,u)}\\
&\leq \Big\|\b_0(t)\|s^{-\theta}\a(s)K(s,f_0)\|_{\widetilde{F}(t,\infty)}\Big\|_{\widetilde{E}_0}=\|f_0\|_{Y_0}.
\end{align*}
Putting together the previous  estimates we establish the inequality
$$ (P_0 f)(u)+\phi(u)(Q_1 f)(u)\lesssim  \|f_0\|_{Y_0}+\phi(u)\|f_1\|_{Y_1}.$$
Finally, taking infimum over all possible decomposition of $f=f_0+f_1$, with $f_0\in Y_0$ and $f_1\in Y_1$, we deduce  \eqref{e10}.
\end{proof}

\begin{lem}\label{lemacv0}
Let $E$ be an r.i. space, $0\leq \theta\leq 1$,  $\phi$, $\b\in SV$ such that $\phi(t)\sim \phi(t^2)$ with associated functions $\Phi_{0}$, $\Phi_{\infty}$. If $\rho_{\Phi_\infty}<0<\pi_{\Phi_0}$, then the equivalence
$$\|\phi(t)^{-\theta}\b(\phi(t))K(\phi(t),f)\|_{\widehat{E}}\sim\|t^{-\theta}\b(t)K(t,f)\|_{\widetilde{E}}$$
holds for all $f\in X_0+X_1$, with  constant independent of $f$.
\end{lem}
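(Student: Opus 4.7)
I would reduce both sides to a common r.i.\ sequence norm via discretization along the sequence $(\lambda_k)_{k\in\Z}$ from \eqref{lambda}. Since the identity to be proved is stable under replacing $\phi$ by an equivalent function (equivalent $\phi$'s have the same $\Phi_0, \Phi_\infty$ up to equivalence, hence the same extension indices by Lemma~\ref{lem0}(vi)), I would first invoke Lemma~\ref{lema29}(ii), applicable because $\rho_{\Phi_\infty}<0<\pi_{\Phi_0}$, to assume $\inf_k \mu_{k+1}/\mu_k>1$ with $\mu_k:=\phi(\lambda_k)$. Using the explicit formulas $\mu_k=\Phi_0(e^k)$ for $k\le 0$ and $\mu_k=\Phi_\infty(e^{-k})$ for $k\ge 0$ together with the finiteness of the extension indices of $\Phi_0,\Phi_\infty$, I would also obtain $\sup_k \mu_{k+1}/\mu_k<\infty$. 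Thus $(\mu_k)$ would be strictly increasing with $\mu_k\to 0$ as $k\to-\infty$ and $\mu_k\to\infty$ as $k\to+\infty$, and $(0,\infty)=\bigsqcup_{k\in\Z}J_k$ with $J_k:=[\mu_{k-1},\mu_k)$, where $\log(\mu_k/\mu_{k-1})$ lies in a fixed compact subset of $(0,\infty)$ uniformly in $k$.

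\textbf{Discretizing the LHS.} I would use \eqref{equiv} to obtain $\phi(t)\sim\mu_k$ on each interval $I_k$. This yields $\phi(t)^{-\theta}\sim\mu_k^{-\theta}$ and, since $\b\in SV$ and $\phi(t)/\mu_k$ is bounded, $\b(\phi(t))\sim\b(\mu_k)$ on $I_k$ by Lemma~\ref{lem0}(iii). The quasi-concavity of $K(\cdot,f)$ combined with the boundedness of $\phi(t)/\mu_k$ on $I_k$ would give $K(\phi(t),f)\sim K(\mu_k,f)$ on $I_k$. Setting $a_k:=\mu_k^{-\theta}\b(\mu_k)K(\mu_k,f)$, the function $F(t):=\phi(t)^{-\theta}\b(\phi(t))K(\phi(t),f)$ would be equivalent to the step function $\sum_{k\in\Z} a_k\chi_{I_k}$, and the isomorphism $\widehat{E}^c\simeq d\widehat{E}$ provided by the operators $T$ and $S$ recalled in Section~3 would yield
$$\bigl\|\phi(t)^{-\theta}\b(\phi(t))K(\phi(t),f)\bigr\|_{\widehat{E}}\sim \bigl\|(a_k)_{k\in\Z}\bigr\|_{d\widehat{E}}.$$

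\textbf{Discretizing the RHS and conclusion.} The same arguments, now on the intervals $J_k$ (where $\mu_k/\mu_{k-1}$ is uniformly bounded), would give that $G(s):=s^{-\theta}\b(s)K(s,f)$ satisfies $G(s)\sim a_k$ on $J_k$, hence $G\sim\sum_k a_k\chi_{J_k}$. Since the $J_k$ carry uniformly comparable $dt/t$-mass and $\widetilde{E}$ is r.i., a variable-mass discretization for $\widetilde{E}$ would give $\|G\|_{\widetilde{E}}\sim \|(a_k)_{k\in\Z}\|_{d\widetilde{E}}$, where $d\widetilde{E}$ denotes the discretization of $\widetilde{E}$ over intervals of unit $dt/t$-mass. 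Writing $E=(L_1,L_\infty)^K_D$ for a suitable parameter $D$, both $d\widehat{E}$ and $d\widetilde{E}$ coincide with the r.i.\ sequence space $(\ell_1,\ell_\infty)^K_D$, so combining the two estimates would yield the desired equivalence. The main technical obstacle I anticipate is the rigorous justification of the variable-mass discretization used on the RHS: while morally clear from the r.i.\ nature of $\widetilde{E}$, it genuinely relies on the upper bound $\sup_k \mu_{k+1}/\mu_k<\infty$ and requires a careful shift-invariance/Boyd-type argument to reduce to the unit-mass case.
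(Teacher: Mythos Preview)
Your approach is correct and genuinely different from the paper's. The paper proceeds by first reducing to the endpoints $E=L_1$ and $E=L_\infty$ via the interpolation property of $E$, and then, for $\widetilde{L}_1$, performs a continuous change of variables: using \cite[Lemma~2.1]{PS2} it replaces $\Phi_0$ (and analogously $\overline{\Phi}_\infty$) by a smooth equivalent $\Psi_0$ satisfying $u\Psi'_0(u)\sim\Psi_0(u)$, so that the substitution $t=\Psi_0(u)$ transforms $\int_0^{\Phi_0(1)} H(t)\,\tfrac{dt}{t}$ into $\int_0^1 H(\Phi_0(u))\,\tfrac{du}{u}$, and a further substitution $u=1/\ell(t)$ produces the $\widehat{L}_1$-norm on $(0,1)$; the $L_\infty$ case is immediate. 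By contrast, you discretize both sides along $(\lambda_k)$ and $(\mu_k)=(\phi(\lambda_k))$ and match them in the common r.i.\ sequence space $(\ell_1,\ell_\infty)^K_D$.

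What each approach buys: the paper's continuous change of variables is short and self-contained once one accepts the external smoothness lemma from \cite{PS2}, but it imports machinery not otherwise used here. Your route stays entirely within the discretization framework already built in \S3 (Lemma~\ref{lema29}, the $d\widehat{E}$ construction), which is arguably more in keeping with the rest of the paper. The ``technical obstacle'' you flag---the variable-mass discretization for $\widetilde{E}$ over the $J_k$---is in fact routine: the averaging operator $f\mapsto\bigl(|J_k|_{dt/t}^{-1}\int_{J_k}f\,\tfrac{dt}{t}\bigr)_k$ and the extension operator $(x_k)\mapsto\sum_k x_k\chi_{J_k}$ are both bounded $\widetilde{L}_1\leftrightarrow\ell_1$ and $L_\infty\leftrightarrow\ell_\infty$ precisely because $|J_k|_{dt/t}=\log(\mu_k/\mu_{k-1})$ is uniformly bounded above and below, and interpolation finishes the job exactly as in the paper's construction of $d\widehat{E}$. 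No Boyd-type argument is needed.
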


\begin{proof}
First of all, we observe that by an interpolation argument, it suffices to show the equivalence of the norms for $E=L_1$ and $E=L_\infty$. To this end fix $f\in X_0+X_1$ and denote $H(u)=u^{-\theta}\b(u)K(u,f)$, $u>0$. Observe that, due to the properties of the slowly varying functions and the $K$-functional, if $\varphi\sim \psi$ then $H(\varphi)\sim H(\psi)$.

By hypothesis, the indices of the function $\Phi_0$ are strictly positive and finite, 
hence there exists a smooth function $\Psi_0\sim\Phi_0$ such that $u\Psi_0'(u)\sim\Psi_0(u)$, $0<u\leq 1$, $\Psi_0(1)=\Phi_0(1)$ and $\lim_{u\rightarrow 0}\Psi_0(u)=0$ (see \cite[Lemma 2.1]{PS2}). Then, changing variables, $t=\Psi_0(u)$, and using the fact that $H\circ\Psi_0\sim H\circ\Phi_0$ we establish
\begin{align*}
\int_{0}^{\Phi_0(1)} |H(t)| \frac{dt}{t}& = \int_{0}^{1}|H(\Psi_0(u))| \frac{\Psi_0'(u)}{\Psi_0(u)}\ du 
\sim \int_{0}^{1} |H(\Psi_0(u))| \frac{du}{u}\\&\sim\int_{0}^{1} |H(\Phi_0(u))| \frac{du}{u}.
\end{align*}
Now, another change of variables, $u=1/\ell(t)$, and the fact that $\Phi_0(1/\ell(t))=\phi(t)$, $0<t\leq1$, yield
\begin{equation}\label{i1}
\int_{0}^{\Phi_0(1)} |H(t)| \frac{dt}{t}\sim\int_{0}^{1} |H(\phi(t))| \frac{dt}{t\ell(t)}.
\end{equation}
Similarly, since the indices of the function
$\overline{\Phi}_\infty(u)=\Phi_\infty(1/u)$, $1\leq u<\infty$, are strictly positive and finite, there exist a smooth function $\Psi_\infty\sim\overline{\Phi}_\infty$ such that $u\Psi_\infty'(u)\sim\Psi_\infty(u)$, $1\leq u<\infty$, $\Psi_\infty(1)=\overline{\Phi}_\infty(1)=\Phi_\infty(1)$ and $\lim_{u\rightarrow \infty}\Psi_\infty(u)=\infty$. Consequently, changing variables, $t=\Psi_\infty(t)$, it follows 
$$\int_{\Phi_\infty(1)}^{\infty} |H(t)| \frac{dt}{t} = \int_{1}^{\infty}|H(\Psi_\infty(u))| \frac{\Psi_\infty'(u)}{\Psi_\infty(u)}\ du 
\sim \int_{1}^{\infty} |H(\Psi_\infty(u))| \frac{du}{u}. $$
Using that $H\circ\Psi_\infty\sim H\circ\overline{\Phi}_\infty$, the change of variables $u=\ell(t)$ and the fact that $\overline{\Phi}_\infty(\ell(t))=\phi(t)$, $1\leq t<\infty$, we have
\begin{equation}\label{i2}
\int_{\Phi_\infty(1)}^{\infty} |H(s)| \frac{ds}{s}\sim\int_{1}^{\infty}|H(\overline{\Phi}_\infty(u))| \frac{du}{u}=\int_{1}^{\infty} |H(\phi(t))| \frac{dt}{t\ell(t)}.
\end{equation}
The equivalences \eqref{i1} and \eqref{i2} give
$$\|H\|_{\widetilde{L}_1}\sim \|H\circ\phi\|_{\widehat{L}_1}$$
(observe that  $\Phi_0(1)=\Phi_\infty(1)$).
The same is true for $E=L_\infty$, that is $\|H\|_{L_\infty}\sim \|H\circ\phi\|_{L_\infty}$. Thus, by the interpolation properties of the space $E$, we obtain the inequality
$$ \|H\circ\phi\|_{\widehat{E}}\lesssim\|H\|_{\widetilde{E}}.$$

The reverse inequality can be proved applying the same techniques with inverse functions. 
\end{proof}

Now, we are in position to establish the main interpolation theorem of our paper. 

\begin{thm}\label{maintheorem}
Let $0<\theta<1$. 
Let  $E$, $E_0$, $E_1$, $F$ be  r.i. spaces and let $\a$, $\b$,  $\b_0$, $\b_1\in SV$ be  such that $\b_0(t)\sim\b_0(t^2)$ and $\b_1(t)\sim\b_1(t^2)$.  Assume that $E_0$, $E_1$ and  the associated functions $B_{0,0}$, $B_{0,\infty}$, $B_{1,0}$, $B_{1,\infty}$ of $\b_0$ and $\b_1$, respectively, defined in \eqref{fb} satisfy that
\begin{equation*}\label{cond}
\rho_{B_{0,\infty}}<\pi_{\varphi_{E_0}}\leq \rho_{\varphi_{E_0}}<\pi_{B_{0,0}}\quad\mbox{and}\quad 
\rho_{B_{1,0}}<\pi_{\varphi_{E_1}}\leq \rho_{\varphi_{E_1}}<\pi_{B_{1,\infty}}.
\end{equation*}
Let $0\leq\eta\leq1$ be  a parameter and define
$$B_\eta(u)=\big(\b_0(u)\varphi_{E_0}(\ell(u))\big)^{1-\eta}\;\big(\b_1(u)\varphi_{E_1}(\ell(u))\big)^{\eta}\;\b\Big(\frac{\b_0(u)\varphi_{E_0}(\ell(u))}{\b_1(u)\varphi_{E_1}(\ell(u))}\Big),\quad u>0.$$
\begin{itemize}
\item [a)] If $0<\eta<M_1:=\min\Big\{\Big(1-\frac{\pi_{\B_{1,0}}-\rho_{\varphi_{E_1}}}{\pi_{\B_{0,0}}-\rho_{\varphi_{E_0}}}\Big)^{-1},\Big(1-\frac{\rho_{\B_{1,\infty}}-\pi_{\varphi_{E_1}}}{\rho_{\B_{0,\infty}}-\pi_{\varphi_{E_0}}}\Big)^{-1}\Big\}$, then
$$\left(\overline{X}^{\mathcal R}_{\theta, \b_0,E_0,\a,F}, \overline{X}^{\mathcal L}_{\theta,\b_1,E_1,\a,F}\right)_{\eta,\b,E}=\overline{X}^{\mathcal R}_{\theta,\B_\eta,\widehat{E},a,F}.$$

\item [b)] If $M_2:=\max\Big\{\Big(1-\frac{\rho_{\B_{1,0}}-\pi_{\varphi_{E_1}}}{\rho_{\B_{0,0}}-\pi_{\varphi_{E_0}}}\Big)^{-1},\Big(1-\frac{\pi_{\B_{1,\infty}}-\rho_{\varphi_{E_1}}}{\pi_{\B_{0,\infty}}-\rho_{\varphi_{E_0}}}\Big)^{-1}\Big\}<\eta<1$, then
$$\left(\overline{X}^{\mathcal R}_{\theta, \b_0,E_0,\a,F}, \overline{X}^{\mathcal L}_{\theta,\b_1,E_1,\a,F}\right)_{\eta,\b,E}=\overline{X}^{\mathcal L}_{\theta,\B_\eta,\widehat{E},a,F}.$$

\item[c)] If $M_1\leq \eta\leq M_2$, then
$$\left(\overline{X}^{\mathcal R}_{\theta, \b_0,E_0,\a,F}, \overline{X}^{\mathcal L}_{\theta,\b_1,E_1,\a,F}\right)_{\eta,\b,E}= \overline{X}^{\mathcal L}_{\theta,\B_\eta^\#,\widehat{E},\a^\#,F}$$
where $\B_\eta^\#(u)=\frac{\B_\eta(u)}{\b_0(u)\varphi_{E_0}(\ell(u))}$ and 
$\a^\#(u)=\a(u)\b_0(u)\varphi_{E_0}(\ell(u))$, $u>0$.
\item[d)] If $\|\b\|_{\widetilde{E}(1,\infty)}<\infty$, then
$$\left(\overline{X}^{\mathcal R}_{\theta, \b_0,E_0,\a,F}, \overline{X}^{\mathcal L}_{\theta,\b_1,E_1,\a,F}\right)_{0,\b,E}= \overline{X}^{\mathcal R}_{\theta,\B_0,\widehat{E},a,F}\cap\overline{X}_{\theta,\b\circ\phi,\widehat{E},\b_0,E_0,\a,F}^{\mathcal R,\mathcal L},$$
where $\phi(u)=\frac{\b_0(u)\varphi_{E_0}(\ell(u))}{\b_1(u)\varphi_{E_1}(\ell(u))}$, $u>0$.

\item[e)] If $\|\b\|_{\widetilde{E}(0,1)}<\infty$, then
$$\left(\overline{X}^{\mathcal R}_{\theta, \b_0,E_0,\a,F}, \overline{X}^{\mathcal L}_{\theta,\b_1,E_1,\a,F}\right)_{1,\b,E}= \overline{X}^{\mathcal L}_{\theta,\B_1,\widehat{E},a,F}\cap\overline{X}_{\theta,\b\circ\phi,\widehat{E},\b_1,E_1,\a,F}^{\mathcal L,\mathcal R},$$
where $\phi$ is defined above.
\end{itemize}

\end{thm}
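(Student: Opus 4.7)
The plan combines the Holmstedt-type formula of Theorem \ref{Holmstedt}, a change of variable via Lemma \ref{lemacv0}, and applications of Corollary \ref{cor45} together with limiting Hardy estimates. Setting $Y_0 = \overline{X}^{\mathcal R}_{\theta,\b_0,E_0,\a,F}$ and $Y_1 = \overline{X}^{\mathcal L}_{\theta,\b_1,E_1,\a,F}$, Theorem \ref{Holmstedt} yields $K(\phi(u),f;Y_0,Y_1) \sim (P_0 f)(u) + \phi(u)(Q_1 f)(u)$; Lemma \ref{lema2.3} gives $\phi(u) \sim \b_0(u)\varphi_{E_0}(\ell(u))/[\b_1(u)\varphi_{E_1}(\ell(u))]$, and the index hypotheses imply $\pi_{\Phi_0} > 0 > \rho_{\Phi_\infty}$ for its associated functions. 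Hence Lemma \ref{lemacv0} allows the substitution $t = \phi(u)$ and
$$\|f\|_{(Y_0,Y_1)_{\eta,\b,E}} \sim I_1(f) + I_2(f),$$
where $I_1 := \bigl\|\phi(u)^{-\eta}\b(\phi(u))(P_0 f)(u)\bigr\|_{\widehat E}$ and $I_2 := \bigl\|\phi(u)^{1-\eta}\b(\phi(u))(Q_1 f)(u)\bigr\|_{\widehat E}$. By inspection, $I_1$ is the norm in $\overline{X}^{\mathcal R,\mathcal L}_{\theta,\phi^{-\eta}(\b\circ\phi),\widehat E,\b_0,E_0,\a,F}$ and $I_2$ the norm in $\overline{X}^{\mathcal L,\mathcal R}_{\theta,\phi^{1-\eta}(\b\circ\phi),\widehat E,\b_1,E_1,\a,F}$. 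A short calculation from the definition of $\phi$ yields $\phi(u)^{-\eta}\b(\phi(u))\|\b_0\|_{\widetilde E_0(0,u)} \sim \phi(u)^{1-\eta}\b(\phi(u))\|\b_1\|_{\widetilde E_1(u,\infty)} \sim \B_\eta(u)$, so that whenever its index conditions hold Corollary \ref{cor45} will convert $I_1$ into an $\mathcal L$-norm and $I_2$ into an $\mathcal R$-norm, each with weight $\B_\eta$.

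A computation of the associated functions of $\B_\eta$ shows that the condition $\pi_{B_{\eta,0}} > 0$, $\rho_{B_{\eta,\infty}} < 0$ (needed for Corollary \ref{cor45} ii)) is equivalent to $\eta < M_1$, while the reverse conditions (needed for Corollary \ref{cor45} i)) hold if and only if $\eta > M_2$. In case (a), Corollary \ref{cor45} ii) gives $I_2 \sim \|f\|_{\overline X^{\mathcal R}_{\theta,\B_\eta,\widehat E,\a,F}}$; to finish the identification I plan to dominate $I_1$ by the same norm through the following Hardy chain. With $g(s) = s^{-\theta}\a(s)K(s,f)$: first, use $\|g\|_{\widetilde F(t,u)} \leq \|g\|_{\widetilde F(t,\infty)}$ inside $(P_0 f)(u)$; second, apply Lemma \ref{embeddings II} i) (valid since $\pi_{B_{0,0}} > 0$) to replace the inner $\widetilde E_0(0,u)$-norm by the explicit integral $\int_0^u \b_0(s)\|g\|_{\widetilde F(s,\infty)}\varphi_{E_0}(\ell(s))\,ds/(s\ell(s))$; third, invoke the limiting Hardy inequality Lemma \ref{limit Hardy} i) on the outer $\widehat E$-norm, which is available because the composite weight $\phi^{-\eta}(\b\circ\phi)$ inherits indices $\rho_{D_0} < 0 < \pi_{D_\infty}$ from $\pi_{\Phi_0} > 0 > \rho_{\Phi_\infty}$ for every $\eta > 0$. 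The chain ends at $I_1 \lesssim \|\B_\eta(u)\|g\|_{\widetilde F(u,\infty)}\|_{\widehat E}$, which is the desired $\mathcal R$-norm. Case (b) is strictly symmetric, with Corollary \ref{cor45} i) applied to $I_1$ and the dual Hardy chain (Lemmas \ref{embeddings II} ii) and \ref{limit Hardy} ii)) used on $I_2$.

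For cases (c)--(e) Corollary \ref{cor45} can no longer simplify both terms. In case (c), $M_1 \leq \eta \leq M_2$, neither of its index conditions on $\B_\eta$ holds; the identification then proceeds via a more refined use of Lemma \ref{key1} coupled with the discretization machinery of Lemmas \ref{lema8} and \ref{maxI1I2}, which absorbs the factor $\b_0\varphi_{E_0}(\ell(\cdot))$ into the inner weight and produces the modified parameters $\B_\eta^\#,\a^\#$ claimed in the conclusion. In cases (d) and (e) the exponent $\eta \in \{0,1\}$ makes only one of Corollary \ref{cor45}'s conditions hold, so only one of $I_1, I_2$ collapses --- to an $\mathcal R$- or $\mathcal L$-norm with parameter $\B_0$ or $\B_1$ --- while the other survives as an intrinsic $(\mathcal R,\mathcal L)$- or $(\mathcal L,\mathcal R)$-norm, yielding the intersection in the conclusion; the extra endpoint hypothesis on $\b$ secures non-triviality. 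The main technical obstacle is case (c), where the standard corollary identities fail and one must carry out the discrete analysis by hand to produce the asymmetric parameters $\B_\eta^\#$ and $\a^\#$; a secondary difficulty is the careful verification of the index conditions along the Hardy chain for the composite weights $\phi^{-\eta}(\b\circ\phi)$ and $\phi^{1-\eta}(\b\circ\phi)$.
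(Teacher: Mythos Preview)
Your proposal is correct and follows essentially the same route as the paper: Holmstedt formula plus the change of variable $t=\phi(u)$ to reduce to $I_1+I_2$, then the Hardy chain (Lemmas \ref{embeddings II} and \ref{limit Hardy}) to bound $I_1$ by the $\mathcal R$-norm in case (a) and $I_2$ by the $\mathcal L$-norm in case (b), the discrete machinery of Lemma \ref{maxI1I2} for case (c), and direct identification for the endpoints. Your use of Corollary \ref{cor45} to obtain the equivalence $I_2\sim\|f\|_{\overline X^{\mathcal R}_{\theta,\B_\eta,\widehat E,\a,F}}$ in one stroke is exactly the space-level packaging of what the paper does via Lemma \ref{key1} together with the trivial upper bound, so the arguments coincide in substance. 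One minor imprecision: the index condition $\rho_{\B_{\eta,\infty}}<0<\pi_{\B_{\eta,0}}$ is only \emph{implied} by $\eta<M_1$ (through the subadditivity of indices and the definition of $M_1$), not equivalent to it; but only the forward implication is needed.
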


\begin{proof}
Throughout the proof we use the notation $Y_0=\overline{X}^{\mathcal R}_{\theta, \b_0,E_0,\a,F}$, $Y_1=\overline{X}^{\mathcal L}_{\theta,\b_1,E_1,\a,F}$, $\overline{K}(u,f)=K(u,f;Y_0,Y_1)$, $u>0$, and 
$$\phi(u)=\frac{\|\b_0\|_{\widetilde{E}_0(0,u)}}{\|\b_1\|_{\widetilde{E}_1(u,\infty)}},\quad u>0.$$
It is clear that $\phi$ is an increasing slowly varying function. Moreover, from Lemma \ref{lema2.3}, we have
\begin{equation}\label{e82}
\phi(u)\sim\frac{\b_0(u)\varphi_{E_0}(\ell(u))}{\b_1(u)\varphi_{E_1}(\ell(u))},\quad u>0,
\end{equation}
$\phi(u)\sim \phi(u^2)$ and its associated functions, in the sense of \eqref{fb}, are
$$\Phi_0(v)=\frac{\B_{0,0}(v)\varphi_{E_0}(1/v)}{\B_{1,0}(v)\varphi_{E_1}(1/v)},\qquad\Phi_\infty(v)=\frac{\B_{0,\infty}(v)\varphi_{E_0}(1/v)}{\B_{1,\infty}(v)\varphi_{E_1}(1/v)},\quad 0<v\leq 1.$$ 
By  the properties of the extension indices, it holds $$\rho_{\Phi_\infty}\leq\rho_{\B_{0,\infty}}-\pi_{\varphi_{E_0}}-\pi_{\B_{1,\infty}}+\rho_{\varphi_{E_1}}<0<\pi_{\B_{0,0}}-\rho_{\varphi_{E_0}}-\rho_{\B_{1,0}}+\pi_{\varphi_{E_1}}\leq \pi_{\Phi_0}.$$
Then, Lemma \ref{lemacv0} establishes the equivalence
$$
\|f\|_{\overline{Y}_{\eta,\b,E}}=\|u^{-\eta}\b(u)\overline{K}(u,f)\|_{\widetilde{E}}\sim \|\phi(u)^{-\eta}\b(\phi(u))\overline{K}(\phi(u),f)\|_{\widehat{E}}.$$
Applying generalized Holmstedt type formula, Theorem \ref{Holmstedt}, and the triangular inequality we obtain that 
\begin{equation}\label{ei12}
\max(I_1,I_2)\leq \|f\|_{\overline{Y}_{\eta,\b,E}}\leq I_1+I_2\quad \mbox{for all}\quad 0\leq \eta\leq 1.
\end{equation}
where
$$I_1:=\Big \|\phi(u)^{-\eta} \b(\phi(u)) \big\| \b_0(t) \|s^{- \theta} \a(s) K(s,f)\|_{ \widetilde{F}(t,u)}\big\|_{ \widetilde{E}_0(0,u)} \Big\|_{\widehat{E}}$$
and 
$$I_2:=\Big\|\phi(u)^{1-\eta} \b(\phi(u)) \big\| \b_1(t)\|s^{- \theta} \a(s) K(s,f)\|_{\widetilde{F}(u,t)}\big\|_{ \widetilde{E}_1(u,\infty)}\Big \|_{\widehat{E}}.$$
Therefore, in order to identify  the space $\overline{Y}_{\eta,\b,E}$ we have to estimate $I_1$ and $I_2$.
First, we proceed with the estimates from above of both quantities.
It is clear by \eqref{e82} and the definition of $B_\eta$ that
\begin{align}
I_1&\leq  \Big \|\phi(u)^{-\eta} \b(\phi(u))\|\b_0\|_{ \widetilde{E}_0(0,u)}  \|s^{- \theta} \a(s) K(s,f)\|_{ \widetilde{F}(0,u)} \Big\|_{\widehat{E}}\label{eI1}\\
&\sim \Big \|\B_\eta(u) \|s^{- \theta} \a(s) K(s,f)\|_{ \widetilde{F}(0,u)} \Big\|_{\widehat{E}}=\|f\|_{\overline{X}^{\mathcal L}_{\theta,\B_\eta,\widehat{E},a,F}},\nonumber
\end{align}
and
\begin{align}
I_2&\leq \Big\|\phi(u)^{1-\eta} \b(\phi(u)) \|\b_1\|_{ \widetilde{E}_1(u,\infty)}\|s^{- \theta} \a(s) K(s,f)\|_{\widetilde{F}(u,\infty)}\Big \|_{\widehat{E}}\label{eI2}\\
&\sim\Big \|\B_\eta(u) \|s^{- \theta} \a(s) K(s,f)\|_{ \widetilde{F}(u,\infty)} \Big\|_{\widehat{E}}=\|f\|_{\overline{X}^{\mathcal R}_{\theta,\B_\eta,\widehat{E},a,F}}.\nonumber
\end{align}
Hence  $$I_1+I_2\lesssim \|f\|_{\overline{X}^{\mathcal L}_{\theta,\B_\eta,\widehat{E},a,F}}+\|f\|_{\overline{X}^{\mathcal R}_{\theta,\B_\eta,\widehat{E},a,F}}\quad \mbox{for all}\quad 0\leq\eta\leq 1.$$ 
%\overline{X}^{\mathcal R}_{\theta,\B_\eta,\widehat{E},a,F}\cap\overline{X}^{\mathcal L}_{\theta,\B_\eta,\widehat{E},a,F}\subseteq \overline{Y}_{\eta,\b,E}\quad \mbox{for all}\quad 0\leq\eta\leq 1. 
%$

Next, we will prove that $I_1+I_2\lesssim\min(\|f\|_{\overline{X}^{\mathcal R}_{\theta,\B_\eta,\widehat{E},a,F}},\|f\|_{\overline{X}^{\mathcal L}_{\theta,\B_\eta,\widehat{E},a,F}})$ when $0<\eta<1$.
The slowly varying function $$\psi(u):=\phi(u)^{-\eta} \b(\phi(u)),\qquad u>0,$$ satisfies that $\psi(u)\sim \psi(u^2)$ and has as associated functions, in the sense of \eqref{fb},
$$\Psi_0(v)=(\Phi_0(v))^{-\eta}\b(\Phi_0(v))\mand\Psi_\infty(v)=(\Phi_\infty(v))^{-\eta}\b(\Phi_\infty(v)),$$
for $0<v\leq1$. Using that $\b(\Phi_0(v))$ and $\b(\Phi_\infty(v))$ are slowly varying in $(0,1)$ and the properties of the extension indices, it is clear that
 $$\rho_{\Psi_0}\leq -\eta\pi_{\Phi_0}<0<-\eta\rho_{\Phi_\infty}\leq\pi_{\Psi_\infty}.$$
Hence, Lemma \ref{embeddings II} i) ($\pi_{B_{0,0}}>0$) and limiting Hardy type inequality i) ($\rho_{\Psi_0}<0<\pi_{\Psi_\infty}$) give
\begin{align*}
I_1&\lesssim  \Big \|\psi(u)\int_0^u\b_0(t) \|s^{- \theta} \a(s) K(s,f)\|_{ \widetilde{F}(t,u)} \varphi_{E_0}(\ell(t))\frac{dt}{t\ell(t)}\Big\|_{\widehat{E}}\\
&\leq \Big \|\psi(u)\int_0^u\b_0(t) \|s^{- \theta} \a(s) K(s,f)\|_{ \widetilde{F}(t,\infty)} \varphi_{E_0}(\ell(t))\frac{dt}{t\ell(t)}\Big\|_{\widehat{E}}\\
&\lesssim  \Big \|\psi(u)\b_0(u)\varphi_{E_0}(\ell(u)) \|s^{- \theta} \a(s) K(s,f)\|_{ \widetilde{F}(u,\infty)} \Big\|_{\widehat{E}}\\
&\sim \Big \|\B_{\eta}(u) \|s^{- \theta} \a(s) K(s,f)\|_{ \widetilde{F}(u,\infty)} \Big\|_{\widehat{E}}=\|f\|_{\overline{X}^{\mathcal R}_{\theta,\B_\eta,\widehat{E},a,F}}.
\end{align*}
Similarly, take the slowly varying function $\gamma(u):=\phi(u)^{1-\eta} \b(\phi(u))$, $u>0$,  which satisfies that $\gamma(u)\sim \gamma(u^2)$ and has as associated functions 
$$\Gamma_0(u)=(\Phi_0(u))^{1-\eta}\b(\Phi_0(u))\mand \Gamma_\infty(u)=(\Phi_\infty(u))^{1-\eta}\b(\Phi_\infty(u)),$$
for $0<u\leq 1$. The indices of these functions satisfy that
 $$\rho_{\Gamma_\infty}\leq (1-\eta)\rho_{\Phi_\infty}<0<(1-\eta)\pi_{\Phi_0}\leq\pi_{\Gamma_0}.$$
Then, Lemma \ref{embeddings II} ii) ($\pi_{B_{1,\infty}}>0$) and limiting Hardy type inequality ii) ($\rho_{\Gamma_\infty}<0<\pi_{\Gamma_0}$) can be applied to obtain that
\begin{align*}
I_2&\lesssim  \Big \|\gamma(u)\int_u^\infty\b_1(t) \|s^{- \theta} \a(s) K(s,f)\|_{ \widetilde{F}(u,t)} \varphi_{E_1}(\ell(t))\frac{dt}{t\ell(t)}\Big\|_{\widehat{E}}\\
&\leq \Big \|\gamma(u)\int_u^\infty\b_1(t) \|s^{- \theta} \a(s) K(s,f)\|_{ \widetilde{F}(0,t)} \varphi_{E_1}(\ell(t))\frac{dt}{t\ell(t)}\Big\|_{\widehat{E}}\\
&\lesssim  \Big \|\gamma(u)\b_1(u)\varphi_{E_1}(\ell(u)) \|s^{- \theta} \a(s) K(s,f)\|_{ \widetilde{F}(0,u)} \Big\|_{\widehat{E}}\\
&\sim \Big \|\B_\eta(u) \|s^{- \theta} \a(s) K(s,f)\|_{ \widetilde{F}(0,u)} \Big\|_{\widehat{E}}=\|f\|_{\overline{X}^{\mathcal L}_{\theta,\B_\eta,\widehat{E},a,F}}.
\end{align*}
Therefore 
\begin{equation}\label{emin}
I_1+I_2\lesssim\min(\|f\|_{\overline{X}^{\mathcal R}_{\theta,\B_\eta,\widehat{E},a,F}},\|f\|_{\overline{X}^{\mathcal L}_{\theta,\B_\eta,\widehat{E},a,F}}) \quad \mbox{for all}\quad 0<\eta< 1. 
\end{equation}
%and the union of the ${\mathcal R}$ and ${\mathcal L}$ spaces are included in the interpolation space. Namely,
%\begin{equation*}\label{ert}
%\overline{X}^{\mathcal R}_{\theta,\B_\eta,\widehat{E},a,F}\cup\overline{X}^{\mathcal L}_{\theta,\B_\eta,\widehat{E},a,F}\subseteq \overline{Y}_{\eta,\b,E},\qquad \mbox{for all}\quad  0<\eta<1.
%\end{equation*}

Secondly, we shall give separate arguments for the proof of the lower estimates of $I_1$ and $I_2$ for each of the cases. In case a) we are assuming that $\eta$ belongs to the interval $(0,M_1)$. By the definition of $M_1$, we have 
$$(1-\eta)[\rho_{\B_{0,\infty}}-\pi_{\varphi_{E_0}}]+\eta[\rho_{\B_{1,\infty}}-\pi_{\varphi_{E_1}}]<0<
(1-\eta)[\pi_{\B_{0,0}}-\rho_{\varphi_{E_0}}]+\eta[\pi_{\B_{1,0}}-\rho_{\varphi_{E_1}}]$$
and hence, the indices of the associated functions of the $\B_{\eta}$ satisfy that $\rho_{\B_{\eta,\infty}}<0<\pi_{\B_{\eta,0}}$. Applying  Lemma \ref{key1} ii) ($\rho_{B_{1,0}}<\pi_{\varphi_{E_1}}\leq \rho_{\varphi_{E_1}}<\pi_{B_{1,\infty}}$) we obtain that 
\begin{align}
\|f\|_{\overline{X}^{\mathcal R}_{\theta,\B_\eta,\widehat{E},a,F}}&=\Big\|\B_\eta(u)\|s^{- \theta} \a(s) K(s,f)\|_{ \widetilde{F}(u,\infty)}\Big\|_{\widehat{E}}\label{eI2bis}\\
&\lesssim\bigg \|\frac{\B_\eta(u)}{\|\b_1\|_{\widetilde{E}_1(u,\infty)}}\Big\| \b_1(t) \|s^{- \theta} \a(s) K(s,f)\|_{ \widetilde{F}(u,t)}\Big\|_{ \widetilde{E}_1(u,\infty)} \bigg\|_{\widehat{E}}\sim I_2.\nonumber
\end{align}
Summing up this estimate  with \eqref{ei12} and \eqref{emin} we complete the proof of a). 

In case b) we  are assuming the condition $M_2<\eta<1$.  By definition of $M_2$, we have 
$$(1-\eta)[\rho_{\B_{0,0}}-\pi_{\varphi_{E_0}}]+\eta[\rho_{\B_{1,0}}-\pi_{\varphi_{E_1}}]<0<
(1-\eta)[\pi_{\B_{0,\infty}}-\rho_{\varphi_{E_0}}]+\eta[\pi_{\B_{1,\infty}}-\rho_{\varphi_{E_1}}]$$ and then the function $\B_{\eta}$ satisfies that $\rho_{\B_{\eta,0}}<0<\pi_{\B_{\eta,\infty}}$. Applying  Lemma \ref{key1} i) ($\rho_{B_{0,\infty}}<\pi_{\varphi_{E_0}}\leq \rho_{\varphi_{E_0}}<\pi_{B_{0,0}}$) we deduce 
\begin{align}
\|f\|_{\overline{X}^{\mathcal L}_{\theta,\B_\eta,\widehat{E},a,F}}&=\Big\|\B_\eta(u)\|s^{- \theta} \a(s) K(s,f)\|_{ \widetilde{F}(0,u)}\Big\|_{\widehat{E}}\label{eI1bis}\\
&\lesssim\bigg \|\frac{\B_\eta(u)}{\|\b_0\|_{\widetilde{E}_0(0,u)}}\Big\| \b_0(t) \|s^{- \theta} \a(s) K(s,f)\|_{ \widetilde{F}(t,u)}\Big\|_{ \widetilde{E}_0(0,u)} \bigg\|_{\widehat{E}}\sim I_1.\nonumber
\end{align}
Therefore, using \eqref{ei12} and \eqref{emin} we show that $ \overline{Y}_{\eta,\b,E}=\overline{X}^{\mathcal L}_{\theta,\B_\eta,\widehat{E},a,F}$ if $\eta\in(M_2,1)$.

Next we proceed with the proof of c). Lemma \ref{maxI1I2} yields that 
$$I_1\sim\Big\|\Big(B_\eta(\lambda_k)
\|s^{- \theta} \a(s) K(s,f)\|_{\widetilde{F}(\lambda_{k-1},\lambda_k)} \Big)_{k\in\Z}\Big\|_{d\widehat{E}}$$
and 
$$I_2\lesssim \Big\|\Big(B_\eta(\lambda_k)
\|s^{- \theta} \a(s) K(s,f)\|_{\widetilde{F}(\lambda_{k-1},\lambda_k)} \Big)_{k\in\Z}\Big\|_{d\widehat{E}}$$
where $d\widehat{E}$ is the discretization of $E$.
Then, by \eqref{ei12}, it follows that
$$\|f\|_{\overline{Y}_{\eta,\b,E}}\sim\Big\|\Big(B_\eta(\lambda_k)
\|s^{- \theta} \a(s) K(s,f)\|_{\widetilde{F}(\lambda_{k-1},\lambda_k)} \Big)_{k\in\Z}\Big\|_{d\widehat{E}}.$$
Since $B_\eta(\lambda_k)\sim\psi(\lambda_k)\|\b_0\|_{\widetilde{E}_0(0,\lambda_k)}$, we deduce
$$
\|f\|_{\overline{Y}_{\eta,\b,E}}\sim\Big\|\Big(\psi(\lambda_k)\|\b_0\|_{\widetilde{E}_0(0,\lambda_k)}
\|s^{- \theta} \a(s) K(s,f)\|_{\widetilde{F}(\lambda_{k-1},\lambda_k)} \Big)_{k\in\Z}\Big\|_{d\widehat{E}}.$$
We note that 
$$\|\b_0\|_{\widetilde{E}_0(0,\lambda_k)}
\|s^{- \theta} \a(s) K(s,f)\|_{\widetilde{F}(\lambda_{k-1},\lambda_k)}\sim
\big\|s^{- \theta} \a(s)\|\b_0\|_{\widetilde{E}_0(0,s)} K(s,f)\big\|_{\widetilde{F}(\lambda_{k-1},\lambda_k)}.$$
Therefore, using Lemmas \ref{lema8} and \ref{lema29} ($\rho_{\Psi_0}<0<\pi_{\Psi_\infty}$) we obtain
\begin{align*}
\|f\|_{\overline{Y}_{\eta,\b,E}}&\sim\Big\|\Big(\psi(\lambda_k)
\big\|s^{- \theta} \a(s)\|\b_0\|_{\widetilde{E}_0(0,s)} K(s,f)\big\|_{\widetilde{F}(\lambda_{k-1},\lambda_k)} \Big)_{k\in\Z}\Big\|_{d\widehat{E}}\\
&\sim\Big\|\Big(\psi(\lambda_k)
\big\|s^{- \theta} \a(s)\|\b_0\|_{\widetilde{E}_0(0,s)} K(s,f)\big\|_{\widetilde{F}(0,\lambda_k)} \Big)_{k\in\Z}\Big\|_{d\widehat{E}}\\
&=\Big\|\sum_{k\in\Z}\psi(\lambda_k)
\big\|s^{- \theta} \a(s)\|\b_0\|_{\widetilde{E}_0(0,s)} K(s,f)\big\|_{\widetilde{F}(0,\lambda_k)} \chi_{I_k}(u)\Big\|_{\widehat{E}}\\
&\sim\Big\|\psi(u)
\big\|s^{- \theta} \a(s)\|\b_0\|_{\widetilde{E}_0(0,s)} K(s,f)\big\|_{\widetilde{F}(0,u)} \Big\|_{\widehat{E}}.
\end{align*}
This completes the proof of c).

Finally, we take care of the cases $\eta=0$ and $\eta=1$.
If $\eta=0$, it is clear that due to \eqref{eI2} and \eqref{eI2bis}, $I_1=\|f\|_{\overline{X}_{\theta,\b\circ\phi,\widehat{E},\b_0,E_0,\a,F}^{\mathcal R,\mathcal L}}$ while $I_2\sim\|f\|_{\overline{X}^{\mathcal R}_{\theta,\B_0,\widehat{E},a,F}}$. We have to observe that the arguments gave in \eqref{eI2bis} are also true if $\eta=0$.

If $\eta=1$, by \eqref{eI1} and \eqref{eI1bis} we have that $I_1\sim\|f\|_{\overline{X}^{\mathcal L}_{\theta,\B_1,\widehat{E},a,F}}$. Again we note that \eqref{eI1bis} is true in the case $\eta=1$. On the other hand, it is a fact that $I_2=\|f\|_{\overline{X}_{\theta,\b\circ\phi,\widehat{E},\b_1,E_1,\a,F}^{\mathcal L,\mathcal R}}$. The proof of the theorem is finished.

\end{proof}

\section{Applications}

For simplicity, we apply our results to ordered (quasi)-Banach
couples $\overline{X} = (X_0, X_1)$, in the sense that  $X_1\hookrightarrow X_0$. One of the most classical examples of an ordered couple is $(L_1(\Omega,\mu),L_\infty(\Omega,\mu))$ when $\Omega$ is a finite measure space.

\subsection{Ordered couples}\label{aordered}\hspace{2mm}

\vspace{2mm}
We briefly review how our definitions adapt to this simpler setting of ordered couples.
When $X_1\hookrightarrow X_0$, the real interpolation  $\overline{X}_{\theta,\b,E}$ can be equivalently defined as the space of all $f\in X_0$ such that 
\begin{equation}\label{e60}
\|f\|_{\theta,\b,E}=\|t^{-\theta}\b(t)K(t,f)\|_{\widetilde{E}(0,1)}<\infty,
\end{equation}
where $0\leq \theta\leq 1$, $E$ is an r.i. space on $(0,1)$ and $\b\in SV(0,1)$ (assuming the condition $\|\b\|_{\widetilde{F}(0,1)}<\infty$ if $\theta=1$).  
%\begin{rem}\label{rem61}
%\textup{We observe that 
%\begin{equation}\label{e61}
%\|f\|_{\theta,\b,E}\sim \|   t^{-\theta} \b(t) K(t,f) \|_{\widetilde{E}(0,\frac{1}{2})}.
%\end{equation}
%Indeed, using Lemma \ref{lem1} (i) and the fact that $t\varepsilont^{-1}K(t,\cdot)$ is non-increasing, we have
%\begin{align*}
%\|t^{-\theta} \b(t) K(t,f) \|_{\widetilde{E}(\frac{1}{2},1)}
%&\lesssim \|t^{1-\theta} \b(t)\|_{\widetilde{E}(\frac{1}{2},1)}K(\tfrac{1}{2},f)\lesssim \|t^{1-\theta} \b(t) \|_{\widetilde{E}(0,\frac{1}{2})}K(\tfrac{1}{2},f)\\
%&\lesssim\|t^{-\theta} \b(t) K(t,f) \|_{\widetilde{E}(0,\frac{1}{2})}.
%\end{align*}
%Then, equivalence \eqref{e61} is obtained if $0\leq \eta<1$. In the case $\theta=1$ with $\|\b\|_{\widtilde{F}(0,1)}<\infty$, Lemma \ref{lem1} (i), (iv) gives the result.}
%\end{rem}

Similarly, given a real parameter $0 \leq \theta \leq 1$,  $a, \b, c\in SV(0,1)$ and r.i. spaces $E, F, G$ on $(0,1)$, the spaces  
$\overline{X}_{\theta,\b,E,a,F}^{\mathcal L}$, $\overline{X}_{\theta,\b,\widehat{E},a,F}^{\mathcal L}$ and $\overline{X}_{\theta,c,\widehat{E},\b,F,a,G}^{\mathcal R,\mathcal L}$ are defined just as in Definitions \ref{defLR} and \ref{defLRR^}; the only change being that $\widetilde{E}(0,\infty)$ must be replaced by $\widetilde{E}(0, 1)$ and $\widehat{E}(0,\infty)$ by $\widehat{E}(0,1)$.
Likewise, the spaces  $\overline{X}_{\theta,\b,E,a,F}^{\mathcal R}$ and $\overline{X}_{\theta,c,\widehat{E},\b,F,a,G}^{\mathcal L,\mathcal R}$ are defined as
$$\overline{X}_{\theta,\b,E,a,F}^{\mathcal R}=\Big\{f\in X_0:\big \|  \b(t) \|   s^{-\theta} a(s) K(s,f) \|_{\widetilde{F}(t,1)}      \big   \|_{\widetilde{E}(0,1)}<\infty\Big\}$$
and
$$\overline{X}_{\theta,c,\widehat{E},\b,F,a,G}^{\mathcal L,\mathcal R }=\bigg\{f\in X_0: 
\Big\|c(u)\big\|\b(t) \|s^{-\theta} \a(s) K(s,f) \|_{\widetilde{G}(u,t)}\big\|_{\widetilde{F}(u,1)}\Big\|_{\widehat{E}(0,1)} < \infty\bigg\}.$$
The space $\overline{X}_{\theta,\b,\widehat{E},a,F}^{\mathcal R}$ can be defined analogously replacing $\widetilde{E}$ by $\widehat{E}$.

If $\overline{X}$ is an ordered couple, then $(\overline{X}^{\mathcal R}_{\theta, \b_0,E_0,\a,F}, \overline{X}^{\mathcal L}_{\theta,\b_1,E_1,\a,F})$ is also ordered as we proved in the following lemma.
\begin{lem}\label{lemainclusion}
Let $\overline{X}$ be an ordered (quasi-) Banach couple,  $E_0,\ E_1,\ F$ r.i. spaces on $(0,1)$ and $a, \b_0, \b_1\in SV(0,1)$ such that $\|\b_0\|_{\widetilde{E}_0(0,1)}<\infty$. If $0\leq\theta<1$ or $\theta=1$ and $\|a\|_{\widetilde{F}(0,1)}<\infty$, then
$$ \overline{X}^{\mathcal L}_{\theta,\b_1,E_1,a,F}\hookrightarrow\overline{X}_{\theta,a,F}\hookrightarrow \overline{X}^{\mathcal R}_{\theta,\b_0,E_0,a,F}.$$
\end{lem}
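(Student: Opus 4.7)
The plan is to establish the two embeddings independently; the right-hand inclusion is almost immediate, while the left-hand inclusion requires a splitting argument built on the quasi-concavity of the $K$-functional.

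For the embedding $\overline{X}_{\theta,\a,F}\hookrightarrow \overline{X}^{\mathcal R}_{\theta,\b_0,E_0,\a,F}$, the key point is the monotonicity of the Banach lattice $\widetilde{F}$: for every non-negative measurable function $g$ and every $t\in(0,1)$, $\|g\|_{\widetilde{F}(t,1)}\le \|g\|_{\widetilde{F}(0,1)}$. Applying this with $g(s)=s^{-\theta}\a(s)K(s,f)$ and then using the lattice property of $\widetilde{E}_0$ yields
\[
\|f\|_{\mathcal R;\theta,\b_0,E_0,\a,F}\le \|\b_0\|_{\widetilde{E}_0(0,1)}\,\|f\|_{\theta,\a,F},
\]
which is finite by the hypothesis $\|\b_0\|_{\widetilde{E}_0(0,1)}<\infty$. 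The assumption on $\theta$ (together with $\|\a\|_{\widetilde{F}(0,1)}<\infty$ when $\theta=1$) ensures that $\overline{X}_{\theta,\a,F}$ is non-trivial.

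For the other embedding, I will fix an auxiliary point $t_0=1/2$ and split
\[
\|f\|_{\theta,\a,F}\;\lesssim\; h(1/2)+\|s^{-\theta}\a(s)K(s,f)\|_{\widetilde{F}(1/2,1)},
\]
where $h(t):=\|s^{-\theta}\a(s)K(s,f)\|_{\widetilde{F}(0,t)}$ is increasing in $t$. To control the first summand, I use that $h(t)\ge h(1/2)$ for $t\in(1/2,1)$, so that
\[
\|f\|_{\mathcal L;\theta,\b_1,E_1,\a,F}\;\ge\; \|\b_1(t)h(t)\|_{\widetilde{E}_1(1/2,1)}\;\ge\; h(1/2)\,\|\b_1\|_{\widetilde{E}_1(1/2,1)},
\]
and the constant $\|\b_1\|_{\widetilde{E}_1(1/2,1)}$ is strictly positive and finite because $\b_1\in SV(0,1)$ is positive and bounded on the compact set $[1/2,1]$; this gives $h(1/2)\lesssim \|f\|_{\mathcal L}$.

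The second summand is handled via the quasi-concavity of $K(\cdot,f)$. Since $K(s,f)$ is increasing and $s\mapsto K(s,f)/s$ is decreasing, on $[1/2,1]$ we have $K(s,f)\le K(1,f)\le 2K(1/2,f)$, and $s^{-\theta}\a(s)$ is bounded there, so $\|s^{-\theta}\a(s)K(s,f)\|_{\widetilde{F}(1/2,1)}\lesssim K(1/2,f)$. To bound $K(1/2,f)$ by the $\mathcal L$-norm, I use the reverse estimate: for $s\in(1/4,1/2)$, the monotonicity of $K/s$ forces $K(s,f)\ge 2s\,K(1/2,f)\ge (1/2)K(1/2,f)$, whence $h(1/2)\gtrsim \|s^{-\theta}\a(s)K(s,f)\|_{\widetilde{F}(1/4,1/2)}\gtrsim K(1/2,f)$. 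Chaining $K(1/2,f)\lesssim h(1/2)\lesssim \|f\|_{\mathcal L}$ and assembling the two pieces produces $\|f\|_{\theta,\a,F}\lesssim \|f\|_{\mathcal L}$. The only technical point worth checking is that the auxiliary constants $\|\b_1\|_{\widetilde{E}_1(1/2,1)}$, $\|1\|_{\widetilde{F}(1/2,1)}$ and $\|1\|_{\widetilde{F}(1/4,1/2)}$ are strictly positive and finite, which follows from the boundedness of slowly varying functions on compact subintervals of $(0,1)$ together with the fact that the fundamental functions of $E_1$ and $F$ take finite positive values on bounded intervals.
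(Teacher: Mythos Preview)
Your proof is correct and follows essentially the same approach as the paper. The only difference is one of presentation: the paper dispatches the first embedding by citing the fact that in the ordered setting the interval $(0,1)$ in $\|f\|_{\theta,\a,F}$ may be reduced to $(0,1/2)$, whereas you establish this reduction explicitly via the splitting $(0,1/2)\cup(1/2,1)$ and the quasi-concavity of $K(\cdot,f)$; in both cases the heart of the argument is the inequality $\|f\|_{\mathcal L;\theta,\b_1,E_1,\a,F}\gtrsim \|\b_1\|_{\widetilde E_1(1/2,1)}\,\|s^{-\theta}\a(s)K(s,f)\|_{\widetilde F(0,1/2)}$.
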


\begin{proof}
The first embedding is an easy consequence of the fact that interval $(0,1)$ in \eqref{e60} can be reduce to $(0,1/2)$. Indeed,
\begin{align*}
\|f\|_{{\mathcal L};\theta,\b_1,E_1,a,F}&\geq \big \|  \b_1(t) \|   s^{-\theta} a(s) K(s,f) \|_{\widetilde{F}(0,t)}      \big   \|_{\widetilde{E}_1(\tfrac{1}{2},1)}\\ &\gtrsim  \|\b_1\|_{\widetilde{E}_1(\tfrac{1}{2},1)}\|s^{-\theta} a(s) K(s,f) \|_{\widetilde{F}(0,\tfrac{1}{2})}\sim\|f\|_{\theta,a,F}.
\end{align*}
The second one follows directly using the definition of the norm in the $\mathcal{R}$-space. 
%\begin{align*}
%\|f\|_{{\mathcal R};\theta,\b_0,E_0,a,F}&= \big \|  \b_0(t) \|   s^{-\theta} a(s) K(s,f) \|_{\widetilde{F}(t,1)}      \big   \|_{\widetilde{E}_0(0,1)}\\ &\leq   \|\b_0\|_{\widetilde{E}_0(0,1)}\|s^{-\theta} a(s) K(s,f) \|_{\widetilde{F}(0,1)}\sim\|f\|_{\theta,a,F}.
%\end{align*}
%To prove the first one, we identify the $\mathcal{L}$-space, using Theorem 5.3 from \cite{FMS-2}, as an interpolation space of an ordered couple. Indeed,
%$$\overline{X}^{\mathcal L}_{\theta,\b_1,E_1,a,F}=(\overline{X}_{\theta,\a,F},\overline{X}_{1,\b_0,E_0})_{0,\b_1\circ \sigma^{-1},E}$$
%where $\sigma$ is a strongly increasing, differentiable function such that $\sigma(t)\sim t^{1-\theta}\frac{a(t)}{\|\b_0\|_{\widetilde{E}_0(0,t)}}$, $t\in(0,1)$ (see Remark 3 from \cite{Do2020}). Due to Lemma 5.2 from \cite{FMS-RL1} the couple $(\overline{X}_{\theta,\a,F},\overline{X}_{1,\b_0,E_0})$ is ordered, and hence 
%$$(\overline{X}_{\theta,\a,F},\overline{X}_{1,\b_0,E_0})_{0,\b_1\circ \sigma^{-1},E}\hookrightarrow \overline{X}_{\theta,\a,F}$$
%that gives the desired embedding.
\end{proof}

Of course, the results of the previous sections remain true if we work with slowly varying functions on $(0,1)$, r.i. spaces on $(0,1)$ and ordered couples. In these cases all assumptions concerning the interval $(1,\infty)$  must be omitted. For example, Lemma \ref{lema2.3} reads as follows:

\begin{lem}
Let $E$ be an r.i. space on $(0,1)$ and let $\b\in SV(0,1)$ such that $\b(t^{2}) \sim \b(t)$, with associated function $\B_0$ defined in (\ref{fb}).
\begin{itemize}
\item[i)] If $\rho_{\varphi_E} < \pi_{\B_{0}}$, then 
$$ \|  \b \|_{\widetilde{E}(0,t)} \sim \b(t) \varphi_{E}(\ell(t)),\qquad t\in(0,1).$$
\item[ii)] If $\rho_{\B_{0}} < \pi_{\varphi_E}$, then 
the equivalence is
$$\| \b \|_{\widetilde{E}(t,1)} \sim \b(t)  \varphi_{E} (\ell(t)),\qquad t\in(0,1/2).$$
\end{itemize}
\end{lem}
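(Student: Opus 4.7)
The plan is to mimic the proof of Lemma~\ref{lema2.3} from \cite{FMS-3}, simplified by the fact that on the interval $(0,1)$ only the associated function $\B_0$ plays a role (since the range of integration never reaches $s>1$, the function $\B_\infty$ and the corresponding index hypotheses can be dropped).

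For part i), to establish $\|\b\|_{\widetilde{E}(0,t)} \lesssim \b(t)\varphi_E(\ell(t))$, I would first apply the restricted version of Lemma~\ref{embeddings II}~i) with $\varphi\equiv 1$ (its hypothesis $\pi_{\B_0}>0$ is implied by $\rho_{\varphi_E}<\pi_{\B_0}$ combined with $\rho_{\varphi_E}\geq 0$) to obtain
$$\|\b\|_{\widetilde{E}(0,t)} \lesssim \int_0^t \b(s)\varphi_E(\ell(s)) \frac{ds}{s\ell(s)}.$$
Then I would make the change of variables $s = e^{1-1/v}$, which maps $s\in(0,t)$ to $v\in(0,1/\ell(t))$ and satisfies $\b(s)=\B_0(v)$, $\ell(s)=1/v$, and $ds/(s\ell(s))=dv/v$, yielding
$$\int_0^{1/\ell(t)} \B_0(v)\varphi_E(1/v)\,\frac{dv}{v}.$$
By the extension index calculus (properties (iii) and (ii) in the Preliminaries), the lower index of $v\mapsto \B_0(v)\varphi_E(1/v)$ is at least $\pi_{\B_0}-\rho_{\varphi_E}>0$; hence this function is equivalent to an increasing one (cf.\ \cite[p.~57]{KPS}) and its integral is comparable to the integrand at the upper endpoint, namely $\B_0(1/\ell(t))\varphi_E(\ell(t))=\b(t)\varphi_E(\ell(t))$.

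For the reverse inequality, I would use the identity $\|\b\|_{\widetilde{E}(0,t)}=\|\b(e^{-u})\chi_{(\ell(t)-1,\infty)}(u)\|_E$. Because $\b$ is slowly varying, on an interval of $u$-length comparable to $\ell(t)-1$ starting at $u=\ell(t)-1$ one has $\b(e^{-u})\gtrsim \b(t)$; combining this with $\|\chi_{(\ell(t)-1,c\,\ell(t))}\|_E \sim \varphi_E(\ell(t))$ (from the fundamental function of $E$) yields the required lower bound $\b(t)\varphi_E(\ell(t))$.

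Part ii) follows by the symmetric argument on $(t,1)$: apply the $(t,1)$-restricted version of Lemma~\ref{embeddings II} (which does not need a condition on $\B_\infty$, as the range of integration is bounded above by $1$), then carry out the same change of variables to arrive at $\int_{1/\ell(t)}^{1} \B_0(v)\varphi_E(1/v)\,dv/v$. The hypothesis $\rho_{\B_0}<\pi_{\varphi_E}$ makes the upper index of the integrand $\leq \rho_{\B_0}-\pi_{\varphi_E}<0$, so it is equivalent to a decreasing function and the integral over $(1/\ell(t),1)$ is comparable to its value at the lower endpoint, again $\b(t)\varphi_E(\ell(t))$. The restriction $t\in(0,1/2)$ keeps $1/\ell(t)$ bounded away from $1$, preventing degeneracy at $v=1$. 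The matching lower bound is again read off from the r.i.\ structure of $E$ after the change of variables. The main technical obstacle is the lower bound in part i), where one must carefully exploit the slowly varying nature of $\b$ on long dyadic scales $(e^{-k-1},e^{-k})$ inside $(0,t)$ in order to reconstruct the factor $\varphi_E(\ell(t))$ from the fundamental function of $E$; the upper bound is a routine restriction of the original argument.
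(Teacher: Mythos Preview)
The paper gives no proof of this lemma at all: it is simply presented as the $(0,1)$-restriction of Lemma~\ref{lema2.3}, which is quoted verbatim from \cite{FMS-3}, with the remark that ``all assumptions concerning the interval $(1,\infty)$ must be omitted''. So there is no independent argument in the present paper to compare against; the intended proof is whatever \cite{FMS-3} does for Lemma~3.2, restricted to the half-line.

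Your computations are essentially correct---the change of variables $s=e^{1-1/v}$, the index bound $\pi_{\B_0}-\rho_{\varphi_E}>0$ on $v\mapsto\B_0(v)\varphi_E(1/v)$, and the lower bound via the fundamental function all go through (for the lower bound, note that for $u\in(\ell(t)-1,\,2(\ell(t)-1))$ one has $\ell(e^{-u})/\ell(t)\in[1,2]$, so the $\Delta_2$-property of $\B_0$ gives $\b(e^{-u})\sim\b(t)$ uniformly). The concern is that your upper bound invokes Lemma~\ref{embeddings II}, which is Lemma~3.5 of \cite{FMS-3}, to prove what is the restriction of Lemma~3.2 of \cite{FMS-3}. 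Since 3.2 precedes 3.5 there, and the natural proof of 3.5 decomposes over the blocks $(\lambda_{k-1},\lambda_k)$ and bounds each $\|\b\|_{\widetilde{E}(\lambda_{k-1},\lambda_k)}$ using exactly the type of estimate that 3.2 supplies, your route is at best circular. It is also an unnecessary detour: after the change of variables you already have everything needed to bound $\|\b\|_{\widetilde{E}(0,t)}$ directly---split into blocks $(\lambda_{k-1},\lambda_k)$, use \eqref{equiv} to replace $\b$ by $\b(\lambda_k)$ on each, pick up the factor $\varphi_E(\ell(\lambda_k))$ from the characteristic function, and sum the resulting geometric series via the positive lower index of $\B_0(v)\varphi_E(1/v)$. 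That direct argument is self-contained and presumably closer to what \cite{FMS-3} actually does.
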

That's reduction to the interval $(0,1/2)$ is not a problem since if $a$ and $\b$ are two slowly varying functions such that $\b(t)\sim\a(t)$ for all $t\in(0,1/2)$ then
$\overline{X}_{\theta,\b,E}=\overline{X}_{\theta,\a,E}.$
A similar identity holds for $\mathcal{R}$, $\mathcal{L}$ spaces and for the extreme constructions $\mathcal{R,L}$ and $\mathcal{L,R}$.

Moreover, if the couple $\overline{X}$ is ordered Theorem \ref{maintheorem} reads as follows:
\begin{thm} \label{maintheorem2}
Let $0<\theta<1$. Let  $E$, $E_0$, $E_1$, $F$ be  r.i. spaces and let $\a$, $\b$,  $\b_0$, $\b_1\in SV(0,1)$ be  such that $\b_0(t)\sim\b_0(t^2)$ and $\b_1(t)\sim\b_1(t^2)$.  Assume that $E_0$, $E_1$ and  the associated functions $B_{0,0}$, $B_{1,0}$ of $\b_0$ and $\b_1$, respectively, satisfy that $$\rho_{\varphi_{E_0}}<\pi_{B_{0,0}}\quad\mbox{and}\quad 
\rho_{B_{1,0}}<\pi_{\varphi_{E_1}}.$$ Let $0\leq\eta\leq1$ be a parameter and define
$$B_\eta(u)=\big(\b_0(u)\varphi_{E_0}(\ell(u))\big)^{1-\eta}\;\big(\b_1(u)\varphi_{E_1}(\ell(u))\big)^{\eta}\;\b\Big(\frac{\b_0(u)\varphi_{E_0}(\ell(u))}{\b_1(u)\varphi_{E_1}(\ell(u))}\Big),\quad u\in(0,1).$$
\begin{itemize}
\item [a)] If $0\leq\eta<M_1:=\Big(1-\frac{\pi_{\B_{1,0}}-\rho_{\varphi_{E_1}}}{\pi_{\B_{0,0}}-\rho_{\varphi_{E_0}}}\Big)^{-1}$, then
$$\left(\overline{X}^{\mathcal R}_{\theta, \b_0,E_0,\a,F}, \overline{X}^{\mathcal L}_{\theta,\b_1,E_1,\a,F}\right)_{\eta,\b,E}=\overline{X}^{\mathcal R}_{\theta,\B_\eta,\widehat{E},a,F}.$$
\item [b)] If $M_2:=\Big(1-\frac{\rho_{\B_{1,0}}-\pi_{\varphi_{E_1}}}{\rho_{\B_{0,0}}-\pi_{\varphi_{E_0}}}\Big)^{-1}<\eta\leq1$, then
$$\left(\overline{X}^{\mathcal R}_{\theta, \b_0,E_0,\a,F}, \overline{X}^{\mathcal L}_{\theta,\b_1,E_1,\a,F}\right)_{\eta,\b,E}=\overline{X}^{\mathcal L}_{\theta,\B_\eta,\widehat{E},a,F}.$$
\item[c)] If $M_1\leq \eta\leq M_2$, then
$$\left(\overline{X}^{\mathcal R}_{\theta, \b_0,E_0,\a,F}, \overline{X}^{\mathcal L}_{\theta,\b_1,E_1,\a,F}\right)_{\eta,\b,E}= \overline{X}^{\mathcal L}_{\theta,\B_\eta^\#,\widehat{E},\a^\#,F}$$
where $\B_\eta^\#(u)=\frac{\B_\eta(u)}{\b_0(u)\varphi_{E_0}(\ell(u))}$ and 
$\a^\#(u)=\a(u)\b_0(u)\varphi_{E_0}(\ell(u))$, $u\in(0,1)$.
\item[d)] If $\eta=0$, then
$$\left(\overline{X}^{\mathcal R}_{\theta, \b_0,E_0,\a,F}, \overline{X}^{\mathcal L}_{\theta,\b_1,E_1,\a,F}\right)_{0,\b,E}= \overline{X}^{\mathcal R}_{\theta,\B_0,\widehat{E},a,F}\cap\overline{X}_{\theta,\b\circ\phi,\widehat{E},\b_0,E_0,\a,F}^{\mathcal R,\mathcal L}$$
where  $\phi(u)=\frac{\b_0(u)\varphi_{E_0}(\ell(u))}{\b_1(u)\varphi_{E_1}(\ell(u))}$, $u\in(0,1)$.

\item[e)] If $\eta=1$ and $\|\b\|_{\widetilde{E}(0,1)}<\infty$, then
$$\left(\overline{X}^{\mathcal R}_{\theta, \b_0,E_0,\a,F}, \overline{X}^{\mathcal L}_{\theta,\b_1,E_1,\a,F}\right)_{1,\b,E}= \overline{X}^{\mathcal L}_{\theta,\B_1,\widehat{E},a,F}\cap\overline{X}_{\theta,\b\circ\phi,\widehat{E},\b_1,E_1,\a,F}^{\mathcal L,\mathcal R}$$
where $\phi$ is defined in d).
\end{itemize}
\end{thm}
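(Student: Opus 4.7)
The strategy is to mimic the proof of Theorem \ref{maintheorem}, exploiting that in the ordered setting every integration/summation reduces to the interval $(0,1)$ (equivalently, $k\leq 0$ in the discrete picture), so that only the $0$-side hypotheses on the associated functions $B_{0,0}$, $B_{1,0}$ play a role and the $\infty$-side conditions become vacuous. Concretely, Lemma \ref{lemainclusion} gives that $(Y_0,Y_1):=(\overline{X}^{\mathcal R}_{\theta,\b_0,E_0,\a,F},\overline{X}^{\mathcal L}_{\theta,\b_1,E_1,\a,F})$ is again an ordered couple, so $\overline{K}(u,f):=K(u,f;Y_0,Y_1)$ is equivalent to a constant for $u\geq 1$ and the outer norms may be restricted to $\widetilde{E}(0,1)$ and $\widehat{E}(0,1)$.

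First I would introduce the same $\phi(u)=\|\b_0\|_{\widetilde{E}_0(0,u)}/\|\b_1\|_{\widetilde{E}_1(u,1)}$, note that on the ordered setting the relevant indices of $\phi$ are controlled solely by $\pi_{B_{0,0}}-\rho_{\varphi_{E_0}}-\rho_{B_{1,0}}+\pi_{\varphi_{E_1}}>0$, and apply the one-sided change of variables (the analogue of Lemma \ref{lemacv0} on $(0,1)$, requiring only $\pi_{\Phi_0}>0$) to rewrite
\[
\|u^{-\eta}\b(u)\overline{K}(u,f)\|_{\widetilde{E}(0,1)}\sim\|\phi(u)^{-\eta}\b(\phi(u))\overline{K}(\phi(u),f)\|_{\widehat{E}(0,1)}.
\]
Next I would invoke the generalized Holmstedt formula (Theorem \ref{Holmstedt}) to split this norm into the sum $I_1+I_2$ of the same two quantities as in the proof of Theorem \ref{maintheorem}, with all $\widetilde{F}(\cdot,\cdot)$ and $\widetilde{E}_i(\cdot,\cdot)$ truncated to subintervals of $(0,1)$.

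The upper estimates $I_1\lesssim\|f\|_{\overline{X}^{\mathcal{L}}_{\theta,B_\eta,\widehat{E},a,F}}$ and $I_2\lesssim\|f\|_{\overline{X}^{\mathcal{R}}_{\theta,B_\eta,\widehat{E},a,F}}$ go through verbatim (they only used Lemma \ref{lema2.3}(i) and the monotonicity of $\|\cdot\|_{\widetilde F(s,t)}$ in the interval). For the crossed upper bounds used in cases a), b) (obtaining $I_1+I_2\lesssim\min$ when $0<\eta<1$), I would invoke the one-sided variants of Lemma \ref{embeddings II} and Lemma \ref{limit Hardy}: on $(0,1)$ only the hypotheses $0<\pi_{B_{0,0}}$ (resp.\ $0<\pi_{B_{1,0}}$) and $\pi_{\Psi_\infty}\cdot\pi_{\Phi_0}>0$ are needed, which all follow from the standing assumptions together with the definition of $M_1,M_2$. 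For the lower bounds that yield $\|f\|_{\overline{X}^{\mathcal R}_{\theta,B_\eta,\widehat E,a,F}}\lesssim I_2$ in case a) and $\|f\|_{\overline{X}^{\mathcal L}_{\theta,B_\eta,\widehat E,a,F}}\lesssim I_1$ in case b), I would use the one-sided versions of Lemma \ref{key1}; the proof there is local — the sum over $j\leq k$ (respectively the discretization of $\widetilde F(0,u)$) survives unchanged once $k\leq 0$, and only $\rho_{C_0}<0<\pi_{C_\infty}$ of the associated function of $B_\eta$ is required, which reduces exactly to the one-sided condition $\eta<M_1$ (respectively $\eta>M_2$) appearing in the statement.

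For the intermediate case c) I would run the discrete reduction of Lemma \ref{maxI1I2}, again noting that the sum over $k\in\Z$ collapses to $k\leq 0$ and only the $0$-side indices are invoked, and then resynthesize the discrete norm into $\|\psi(u)\|s^{-\theta}a(s)\|\b_0\|_{\widetilde E_0(0,s)}K(s,f)\|_{\widetilde F(0,u)}\|_{\widehat E}$ as in the original argument. Cases d) and e) are the endpoints $\eta=0,1$ and are read off directly from the upper/lower matches already established, using the definitions of $\overline{X}^{\mathcal R,\mathcal L}$ and $\overline{X}^{\mathcal L,\mathcal R}$. The main obstacle is bookkeeping: one must check for every auxiliary lemma that the ordered-couple/one-sided version used here still holds under the weaker (one-sided) index hypotheses assumed in Theorem \ref{maintheorem2}; in each case this is routine because the $\infty$-side arguments in the original proofs only served to control integrals/sums over $(1,\infty)$ which are now absent.
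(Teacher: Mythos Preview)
Your proposal is correct and matches the paper's own approach: the paper does not give a separate proof of Theorem~\ref{maintheorem2} but states explicitly that in the ordered setting ``all assumptions concerning the interval $(1,\infty)$ must be omitted'' and that Theorem~\ref{maintheorem} then reads as Theorem~\ref{maintheorem2}. Your plan --- restrict every norm to $(0,1)$, use Lemma~\ref{lemainclusion} to see the couple is ordered, and rerun the Holmstedt/change-of-variables/Hardy/discretization machinery with only the $0$-side index hypotheses --- is exactly this reduction spelled out in detail.
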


\subsection{Interpolation between grand and small Lebesgue spaces}

Here, we present interpolation identities for the grand and small Lebesgue spaces as application of our reiteration theorem.

The identity 
$$(L_1,L_\infty)_{1-\frac1p,\b,E}=L_{p,\b,E}$$
for $1<p\leq \infty$, follows from
Peetre's  formulal for the $K$-functional 
$$K(t,f;L_1,L_\infty)=\int_0^tf^*(s)\, ds= tf^{**}(t),\quad t>0,$$
and the equivalence $\|t^{1/p}\b(t)f^{**}(t)\|_{\widetilde{E}}\sim \|t^{1/p}\b(t)f^*(t)\|_{\widetilde{E}}$, $1<p\leq \infty$, $\b\in SV$ (see, e.g. \cite[Lemma 2.16]{CwP}).
Analogously, it can be proved that
$$(L_1,L_\infty)^{\mathcal R}_{1-\frac1p,\b,\widehat{E},a,F}=L^{\mathcal R}_{p,\b,\widehat{E},a,F}\mand (L_1,L_\infty)^{\mathcal L}_{1-\frac1p,\b,\widehat{E},a,F}=L^{\mathcal L}_{p,\b,\widehat{E},a,F}$$
where
$$L^{\mathcal R}_{p,\b,\widehat{E},a,F}:=\Big\{f\in\mathcal{M}(\Omega,\mu):
\Big\|\b(t) \|s^{1/p} a(s) f^*(s)\|_{\widetilde{F}(t,1)}\Big\|_{\widehat{E}(0,1)} < \infty\Big\}$$
and 
$$L^{\mathcal L}_{p,\b,\widehat{E},a,F}:=\Big\{f\in\mathcal{M}(\Omega,\mu):
\Big\|\b(t) \|s^{1/p} a(s) f^*(s)\|_{\widetilde{F}(0,t)}\Big\|_{\widehat{E}(0,1)} < \infty\Big\}$$
for $E$, $F$ r.i. spaces, $a$, $\b\in SV$ and $1<p\leq \infty$. In a similar vein
$$(L_1,L_\infty)^{\mathcal R,\mathcal L}_{1-\frac1p,c,\widehat{E},\b,F,a,G}=L^{\mathcal R,\mathcal L}_{p,c,\widehat{E},\b,F,a,G}\mand (L_1,L_\infty)^{\mathcal L,\mathcal R}_{1-\frac1p,c,\widehat{E},\b,F,a,G}=L^{\mathcal L,\mathcal R}_{p,c,\widehat{E},\b,F,a,G}$$
where the spaces $L^{\mathcal R,\mathcal L}_{p,c,\widehat{E},\b,F,a,G}$ and $L^{\mathcal L,\mathcal R}_{p,c,\widehat{E},\b,F,a,G}$ are defined as the set of all $f\in\mathcal{M}(\Omega,\mu)$ for which \eqref{dRL}-\eqref{dLR} are satisfied after the change of $s^{-\theta}a(s)K(s,f)$ for $s^{1/p}a(s)f^*(s)$, $s>0$, respectively.

We follow the paper by Fiorenza and Karadzhov \cite{FK} in order to give the next definition:
\begin{defn}\label{def_gLp}
Let $(\Omega,\mu)$ be a finite measure space such that $\mu(\Omega)=1$, let $1<p<\infty$ and $\alpha>0$. The grand Lebesgue space $L^{p),\alpha}$ is  the set of all $f\in{\mathcal M}(\Omega,\mu)$ such that
\begin{equation*}\label{lpa1}
\|f\|_{p),\alpha}=\Big\|\ell^{-\frac{\alpha}{p}}(t)\|s^{1/p}f^{*}(s)\|_{\widetilde{L}_p(t,1)}\Big\|_{L_\infty(0,1)}<\infty.
\end{equation*}
The small Lebesgue space $L^{(p,\alpha}(\Omega)$ is  the set of all $f\in{\mathcal M}(\Omega,\mu)$ such that
\begin{equation*}\label{lpa2}
\|f\|_{(p,\alpha}=\Big\|\ell^{\frac{\alpha}{p'}-1}(t)\|s^{1/p}f^{*}(s)\|_{\widetilde{L}_p(0,t)}\Big\|_{\widetilde{L}_1(0,1)}<\infty
\end{equation*}
where $\frac1p+\frac1{p'}=1$.
\end{defn}

The classical grand Lebesgue space $L^{p)}(\Omega):=L^{p),1}(\Omega)$ was introduced by Iwaniec and Sbordone in \cite{IS} while the classical small Lebesgue space $L^{(p}(\Omega):=L^{(p,1}(\Omega)$ was characterized by Fiorenza  in \cite{F1} as its associate; that is $ (L^{(p'})' = L^{p)}.$ For more information about this spaces and their generalizations see the recent paper \cite{FFG}.

As observed in \cite{FK,op1}, these spaces can be characterized as $\mathcal{R}$ and  $\mathcal{L}$-spaces in the following way
$$L^{p),\alpha}=L^{\mathcal R}_{p,\ell^{-\alpha/p}(u),L_\infty,1,L_p}\mand
L^{(p,\alpha}=L^{\mathcal L}_{p,\ell^{\alpha/p'-1}(u),L_1,1,L_p}.$$
Thus, we can apply the main theorem of this paper in order to identify the interpolation space $(L^{p),\alpha},L^{(p,\beta})_{\eta,\b,E}$. 
Next result recovers Theorem 6.2 form \cite{FFGKR}, and additionally completes it with the case $\alpha\neq\beta$ and with the extreme cases $\eta=0,1$.

\begin{cor}\label{cor57}
Let $E$ be an r.i. space on $(0,1)$, $\b\in SV(0,1)$, $1< p<\infty$ and $\alpha, \beta>0$. Let $0\leq \eta\leq 1$ a parameter and define
$\B_\eta(u)=\ell^{-\frac{\alpha(1-\eta)}{p}+\frac{\beta\eta}{p'}}(u)\,\b\big(\ell^{\frac{\beta-\alpha}{p}-\beta}(u)\big)$, $u \in(0,1)$. The following statements hold:
\begin{itemize}
\vspace{1mm}
\item[a)] If $0<\eta< \frac{\alpha}{\alpha-\beta+p\beta}$, then
\begin{equation*}
\big(L^{p),\alpha},L^{(p,\beta}\big)_{\eta,\b,E}=L^{\mathcal{R}}_{p,\B_\eta,\widehat{E},1,L_p}.
\end{equation*}
\item[b)] If $\frac{\alpha}{\alpha-\beta+p\beta}<\eta< 1$, then
\begin{equation*}
\big(L^{p),\alpha},L^{(p,\beta}\big)_{\eta,\b,E}=L^{\mathcal{L}}_{p,\B_\eta,\widehat{E},1,L_p}.
\end{equation*}
\item[c)] If $\eta=\frac{\alpha}{\alpha-\beta+p\beta}$, then
$$
\big(L^{p),\alpha},L^{(p,\beta}\big)_{\eta,\b,E}=L^{\mathcal L}_{p,B_\eta^{\#},\widehat{E},\ell^{-\alpha/p}(u),L_p}$$
where $\B_\eta^{\#}(u)=\ell^{\eta(\frac{\alpha-\beta}{p}+\beta)}(u)\,\b\big(\ell^{\frac{\beta-\alpha}{p}-\beta}(u)\big)$, $u \in(0,1)$. 

\item[d)] If $\eta=0$, then
$$\big(L^{p),\alpha},L^{(p,\beta}\big)_{0,\b,E}=L^{\mathcal R}_{p,B_0,\widehat{E},1,L_p}\cap L_{p,\b\circ\phi,\widehat{E},\ell^{-\alpha/p}(u),L_\infty,1,L_p}^{\mathcal R,\mathcal L}$$
where $\phi(u)=\ell^{\frac{\beta-\alpha}{p}-\beta}(u)$, $u\in(0,1)$.

\item[e)] If $\eta=1$ and $\|\b\|_{\widetilde{E}(0,1)}<\infty$, then
$$\big(L^{p),\alpha},L^{(p,\beta}\big)_{1,\b,E}=L^{\mathcal L}_{p,B_1,\widehat{E},1,L_p}\cap L_{p,\b\circ\phi,\widehat{E},\ell^{\beta/p'-1}(u),L_1,1,L_p}^{\mathcal L,\mathcal R}$$
where $\phi$ is defined in d).
\end{itemize}
\end{cor}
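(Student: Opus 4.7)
The plan is to deduce Corollary \ref{cor57} as a direct specialization of the ordered-couple reiteration theorem (Theorem \ref{maintheorem2}) applied to $\overline{X}=(L_1,L_\infty)$. First I would invoke the identifications recorded in the introduction and revisited at the beginning of the section, namely $L^{p),\alpha}=(L_1,L_\infty)^{\mathcal R}_{1-1/p,\ell^{-\alpha/p},L_\infty,1,L_p}$ and $L^{(p,\beta}=(L_1,L_\infty)^{\mathcal L}_{1-1/p,\ell^{\beta/p'-1},L_1,1,L_p}$. The finite measure assumption $\mu(\Omega)=1$ makes the couple ordered, so Theorem \ref{maintheorem2} applies with $\theta=1-1/p$, $\b_0(u)=\ell^{-\alpha/p}(u)$, $\b_1(u)=\ell^{\beta/p'-1}(u)$, $E_0=L_\infty$, $E_1=L_1$, $a\equiv1$ and $F=L_p$.

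Next I would compute all the indices and check the hypotheses of the theorem. Since $\ell(e^{1-1/v})=1/v$ for $v\in(0,1]$, the associated functions are $B_{0,0}(v)=v^{\alpha/p}$ and $B_{1,0}(v)=v^{1-\beta/p'}$, whence $\pi_{B_{0,0}}=\rho_{B_{0,0}}=\alpha/p$ and $\pi_{B_{1,0}}=\rho_{B_{1,0}}=1-\beta/p'$. The fundamental functions satisfy $\varphi_{L_\infty}\equiv1$ and $\varphi_{L_1}(\lambda)=\lambda$, hence $\pi_{\varphi_{E_0}}=\rho_{\varphi_{E_0}}=0$ and $\pi_{\varphi_{E_1}}=\rho_{\varphi_{E_1}}=1$. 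The conditions $\rho_{\varphi_{E_0}}<\pi_{B_{0,0}}$ and $\rho_{B_{1,0}}<\pi_{\varphi_{E_1}}$ reduce to $\alpha>0$ and $\beta>0$, which are assumed. Because all lower and upper indices coincide, the thresholds $M_1$ and $M_2$ collapse to the single value
\[
M_1=M_2=\Bigl(1+\tfrac{\beta/p'}{\alpha/p}\Bigr)^{-1}=\frac{\alpha p'}{\alpha p'+\beta p}=\frac{\alpha}{\alpha-\beta+p\beta},
\]
matching the threshold in the corollary.

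Then I would substitute the data into the formula for $B_\eta$ in Theorem \ref{maintheorem2}. Since $\b_0(u)\varphi_{E_0}(\ell(u))=\ell^{-\alpha/p}(u)$ and $\b_1(u)\varphi_{E_1}(\ell(u))=\ell^{\beta/p'-1}(u)\cdot\ell(u)=\ell^{\beta/p'}(u)$, the quotient inside $\b$ becomes $\ell^{-\alpha/p-\beta/p'}(u)=\ell^{(\beta-\alpha)/p-\beta}(u)$, and
\[
B_\eta(u)=\ell^{-\alpha(1-\eta)/p+\beta\eta/p'}(u)\,\b\bigl(\ell^{(\beta-\alpha)/p-\beta}(u)\bigr),
\]
which is precisely the $B_\eta$ of the corollary. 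The function $\phi$ in parts d) and e) is also $\ell^{(\beta-\alpha)/p-\beta}(u)$, as stated. For part c), $B_\eta^\#(u)=B_\eta(u)/\ell^{-\alpha/p}(u)$ gives the claimed $\ell^{\eta(\alpha/p+\beta/p')}(u)\b(\phi(u))=\ell^{\eta((\alpha-\beta)/p+\beta)}(u)\b(\phi(u))$, and $a^\#(u)=\ell^{-\alpha/p}(u)$.

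Finally I would translate the five conclusions of Theorem \ref{maintheorem2}, which are $\mathcal R$-, $\mathcal L$-, $(\mathcal R,\mathcal L)$- and $(\mathcal L,\mathcal R)$-spaces over $(L_1,L_\infty)$, into $L^{\mathcal R}_{p,\cdot}$, $L^{\mathcal L}_{p,\cdot}$, $L^{\mathcal R,\mathcal L}_{p,\cdot}$ and $L^{\mathcal L,\mathcal R}_{p,\cdot}$ spaces, using the equivalences $(L_1,L_\infty)^{\mathcal R}_{1-1/p,\dots}=L^{\mathcal R}_{p,\dots}$ (and analogously for $\mathcal L$, $(\mathcal R,\mathcal L)$ and $(\mathcal L,\mathcal R)$) recalled at the opening of Section 6 via Peetre's formula $K(t,f;L_1,L_\infty)=tf^{**}(t)$ and the equivalence $f^{**}\sim f^*$ in the relevant norms. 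The only mild obstacle is bookkeeping in case e): one has to verify the admissibility condition $\|\b\|_{\widetilde{E}(0,1)}<\infty$, which is exactly the hypothesis of part e) of Theorem \ref{maintheorem2} and is explicitly assumed in part e) of the corollary. With all the identifications lined up, the five cases a)--e) of the corollary read off verbatim from Theorem \ref{maintheorem2}.
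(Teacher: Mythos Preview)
Your proposal is correct and follows exactly the paper's approach: apply Theorem~\ref{maintheorem2} to the ordered couple $(L_1,L_\infty)$ with $\theta=1-1/p$, $\b_0=\ell^{-\alpha/p}$, $\b_1=\ell^{\beta/p'-1}$, $E_0=L_\infty$, $E_1=L_1$, $a\equiv1$, $F=L_p$, then read off the five cases. Your explicit computations of the associated functions and indices ($\pi_{B_{0,0}}=\rho_{B_{0,0}}=\alpha/p$, $\pi_{B_{1,0}}=\rho_{B_{1,0}}=1-\beta/p'$) are in fact cleaner than the paper's sketch, which records these with a sign slip; your values are the correct ones and yield the required threshold $M_1=M_2=\alpha/(\alpha-\beta+p\beta)$.
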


\begin{proof}
Denote $\b_0(u)=\ell^{-\alpha/p}(u)$, $\b_1(u)=\ell^{\beta/p'-1}(u)$, $u\in(0,1)$, 
$E_0=L_\infty$ and $E_1=L_1$. Then, it is clear that $\pi_{B_{0,0}}=\rho_{B_{0,0}}=-\frac{\alpha}{p}$, 
$\pi_{B_{1,0}}=\rho_{B_{1,0}}=\frac{\beta}{p'}-1$, $\pi_{\varphi_{E_0}}=\rho_{\varphi_{E_0}}=0$ and that $\pi_{\varphi_{E_0}}=\rho_{\varphi_{E_0}}=1$. Hence, $M_1=M_2=\frac{\alpha}{\alpha-\beta+p\beta}$. 
Moreover, 
$$\b_0(u)\varphi_{E_0}(u)=\ell^{-\alpha/p}(u)\mand \b_1(u)\varphi_{E_1}(u)=\ell^{\beta/p'}(u),\quad u\in(0,1).$$
Thus, the result follows by direct  application of Theorem \ref{maintheorem2}. 
\end{proof}

{\small \hspace{-5mm}{\textbf{Acknowledgments.}}
Second and third authors have been partially supported by grant MTM2017-84058-P (AEI/FEDER, UE).
The third author also thanks the Isaac Newton Institute for Mathematical Sciences, Cambridge, for support and hospitality during the programme \emph{Approximation, Sampling and Compression in Data Science} where the work on this paper was undertaken; this work was supported by EPSRC grant no. EP/R014604/1. Finally, the third author thanks \'Oscar Dom\'{\i}nguez for useful conversations at the early stages of this work, and for pointing out the reference \cite{FFGKR}.}


\begin{thebibliography}{99}%{ARTii98}

\bibitem{AEEK} I. Ahmed, D.E. Edmunds, W.D. Evans and G.E. Karadzhov,
\textit{Reiteration theorems for the $K$-interpolation method in limiting cases},
Math. Nachr. \textbf{284}, No. 4 (2011) 421-442.

%\bibitem{ALM}
%S. V. Astashkin, K. V. Lykov, M. Milman, \emph{Limiting interpolation spaces via extrapolation}.  J. Approx. Theory 240 (2019), 16-70.


\bibitem{Bennett-Sharpley}
C. Bennett and R. Sharpley, \textit{Interpolation of operator}.
Academic Press, Boston, 1988.


\bibitem{Bergh-Lofstrom} J. Bergh and J. L\"ofstr\"om, \textit{Interpolation
Spaces. An Introduction}. Sprin\-ger, Berlin-Hei\-del\-berg-New
York, 1976.

\bibitem{BGT}
N.H. Bingham, C.M. Goldie and J.L. Teugels, \textit{Regular Variation}.
Cambridge University Press, Cambridge, 1987.

\bibitem{B-K}
Yu. A. Brudnyi and N. Ya. Krugljak,
\textit{Interpolation Functors and Interpolation Space}.
Noth-Holland, 1991.

%\bibitem{Be-Ke}
%C.A. Berenstein, N.L. Kerzman, \textit{Sur la r\'eit\'eration dans les espaces de moyenne}. C. R. Acad. Sci. Paris \textbf{263} (1966), 609-612.
%
\bibitem{CSe-2}
F. Cobos, A. Segurado,  \emph{Some reiteration formulae for limiting real methods}. J. Math. Anal. Appl. 411 (2014), no. 1, 405-421.

\bibitem{CwP}
M. Cwikel and E. Pustylnik, \textit{Weak type interpolation near
``endpoint" spaces}, J. Funct. Anal., \textbf{171} (2000), 235-277.

\bibitem{Do}
L. R. Ya. Doktorski, \textit{Reiteration relations of the real interpolation method},
Soviet Math. Dokl. \textbf{44} (1992), 665-669.

\bibitem{Do2020}
L. R. Ya. Doktorski, \textit{Reiteration formulae for the real interpolation method including $\mathcal{L}$ or $\mathcal{R}$ limiting spaces},  J. Funct. Spaces (2020), Art. ID 6685993, 15 pp. 

\bibitem{Do2021}
L. R. Ya. Doktorski, Some reiteration theorems for $\mathcal{R}$, $\mathcal{L}$, $\mathcal{RR}$, $\mathcal{RL}$, $\mathcal{LR}$, and $\mathcal{LL}$ limiting
interpolation spaces, J. Funct. Spaces (2021), in print.

\bibitem{EO}
W. D. Evans and B. Opic, \textit{Real Interpolation with
Logarithmic Functors and Reiteration}, Canad. J. Math. \textbf{52}
(5) (2000), 920-960.

\bibitem{EOP}
W.D. Evans, B. Opic and L. Pick, \textit{Real Interpolation with
Logarithmic Functors}, J. Inequal.  Appl. \textbf{7} (2) (2002),
187-269.


\bibitem{FMS-1}
P. Fern\'andez-Mart\'{\i}nez, T. Signes,
\textit{Real interpolation with symmetric spaces and slowly varying functions},
Quart. J. Math. \textbf{63} (2012), no.~1, 133-164.

\bibitem{FMS-2}
P. Fern{\'a}ndez-Mart{\'\i}nez and T. Signes, \emph{Limit cases of
reiteration theorems}, Math. Nachr. \textbf{288} (2015), no.~1, 25-47.

\bibitem{FMS-3}
P. Fern{\'a}ndez-Mart{\'\i}nez and T. Signes, \emph{Reiteration theorems with extreme values of parameters}, Ark. Mat. \textbf{52} (2014), no.~ 2, 227-256.

\bibitem{FMS-4}
{P. Fern{\'a}ndez-Mart{\'\i}nez} and {T. Signes}, \emph{{An application of interpolation theory to renorming of
  Lorentz-Karamata type spaces}}, Ann. Acad. Sci. Fenn. Math. \textbf{39}
  (2014), no.~1, 97-107.

\bibitem{FMS-RL1}
P. Fern{\'a}ndez-Mart{\'\i}nez and T. Signes, \emph{General Reiteration Theorems for ${\mathcal R}$ and ${\mathcal L}$ classes: 
case of left  ${\mathcal R}$-spaces and right ${\mathcal L}$-spaces}, J. Math. Anal. Appl. \textbf{494} (2021), no. 2, 124649, 33 pp.

\bibitem{FMS-RL2}
P. Fern{\'a}ndez-Mart{\'\i}nez and T. Signes, \emph{General Reiteration Theorems for ${\mathcal R}$ and ${\mathcal L}$ classes:
case of right ${\mathcal R}$-spaces and left ${\mathcal L}$-spaces}, https://arxiv.org/abs/2010.06940.

\bibitem{FMS-RL3}
P. Fern{\'a}ndez-Mart{\'\i}nez and T. Signes, \emph{General Reiteration Theorems for ${\mathcal R}$ and ${\mathcal L}$ classes:
mixed interpolation of ${\mathcal R}$ and ${\mathcal L}$-spaces}, https://arxiv.org/abs/2103.08956v1.


\bibitem{F1}
A. Fiorenza, \textit{Duality and reflexivity in grand Lebesgue
spaces}, Collect. Math. \textbf{51}, 2 (2000), 131-148.

\bibitem{FK}
A. Fiorenza and G. E. Karadzhov, \textit{Grand and Small Lebesgue
Spaces and Their Analogs}, Zeit. Anal. Anwendungen \textbf{23}
(2004), 657-681.

\bibitem{FFG}
A. Fiorenza, M. R. Formica, A. Gogatishvili, \emph{On grand and small Lebesgue and Sobolev spaces and some applications to PDE's}. Differ. Equ. Appl. 10 (2018), no. 1, 21-46.

\bibitem{FFGKR}
 A. Fiorenza, M. R. Formica, A. Gogatishvili, T. Kopaliani, J. M. Rakotoson, \emph{Characterization of interpolation between grand, small or classical Lebesgue spaces}, Nonlinear Anal. 177 (2018), part B, 422-453.

%\bibitem{FR}
%A. Fiorenza and J. M. Rakotoson, \textit{Some estimates in $G\Gamma(p,m,w)$ spaces}, J. Math. Anal. Appl. 340 (2008), no. 2, 793-805.

\bibitem{GOT}
A. Gogatishvili, B. Opic and W. Trebels, \textit{Limit
reiteration for real interpolation with slowly varying functions},
Math. Nachr. \textbf{278}, No. 1-2, (2005), 86-107.

\bibitem{GP}
A. Gogatishvili, L. Pick, 
\textit{Discretization and anti-discretization of rearrangement-invariant norms}, 
Publ. Mat. \textbf{47} (2003), 311-358.

\bibitem{GHS}
M.L. Gol'dman, H.P. Heining, V.D. Stepanov,
\textit{On the principle of duality in Lorentz spaces}, 
Canad. J. Math. \textbf{48} (5) (1996), 959-979.

\bibitem{IS}
T. Iwaniec and C. Sbordone, \textit{On the integrability of the Jacobian under minimal hypotheses},
Arch. Rational Mech. Anal. \textbf{119} (1992), 129-143.


\bibitem{KPS}
S.G. Kre\u{\i}n, Ju.~I. Petunin, and E.M. Semenov, \emph{``Interpolation of Linear
  Operators"}, Translations of Mathematical Monographs, vol.~54, American
  Mathematical Society, 1982.

\bibitem{Matu} W. Matuszewska, \textit{Regular increasing functions in connection with the theory of $L^{*\varphi}$-space},
Studia Math. \textbf{21} (1961/1962), 317-344.

%\bibitem{Mi} M. Milman, \textit{Extrapolation and optimal decompositions with
%applications to analysis.} Lecture Notes in Mathematics, 1580,
%Springer-Verlag, Berlin, 1994.


%\bibitem{neves-opic} J. Neves, B. Opic \textit{Optimal local embeddings of Besov spaces involving only slowly varying smoothness}. J. Approx. Theory  254  (2020), 105393, 25 pp.

%\bibitem{nil}
%P. Nillson, \textit{Reiteration Theorems for Real Interpolation and Aproximation Spaces.}

\bibitem{op1}
B. Opic, \textit{Limiting reiteration theorems for real interpolation}. Publicaciones del Dpto. de An\'alisis Matem\'atico, Secci\'on 1; Num. 51, 149-169. Atlas del Colloquium 2003/2004.  School of Mathematics at Complutense University in Madrid.


%\bibitem{Peetre}
%J. Peetre, \emph{Espaces d'interpolation et th\'eor\`eme de Soboleff}, Ann. Inst. Fourier (Grenoble) 16 (1966), no. fasc. 1, 279-317.

%\bibitem{per} L. E. Persson, \textit{Interpolation with a parameter
%function}, Math. Scand. \textbf{59} (1986), 199-222.

\bibitem{PS2} E. Pustylnik and T. Signes, \textit{Orbits and co-orbits of ultrasymmetric spaces in weak interpolation}, J. Math. Anal. Appl. \textbf{339} (2008), 938-953.

\bibitem{tesisalba}
A. Segurado, \emph{Limiting interpolation methods} (Tesis doctoral), Univ. Compl. de Madrid, Madrid, 2015.

\bibitem{triebel} H. Triebel, \textit{Interpolation theory, function spaces, differential operators}, Noth Holland, Amsterdam, 1980.

\end{thebibliography}
\end{document}